\theoremstyle{plain}
\newtheorem{lemma}{Lemma}[section]
\newtheorem*{theorem*}{Theorem}
\newtheorem*{lemma*}{Lemma}
\newtheorem*{proposition*}{Proposition}
\newtheorem*{conjecture*}{Conjecture}
\newtheorem*{corollary*}{Corollary}
\newtheorem*{problem*}{Problem}
\newtheorem{theorem}[lemma]{Theorem}
\newtheorem{corollary}[lemma]{Corollary}
\newtheorem{proposition}[lemma]{Proposition}
\theoremstyle{definition}
\newtheorem{definition}[lemma]{Definition}
\newtheorem{remark}[lemma]{Remark}
\DeclareMathOperator{\Coh}{Coh}
\DeclareMathOperator{\Ext}{Ext}
\DeclareMathOperator{\ext}{ext}
\DeclareMathOperator{\Amp}{Amp}
\DeclareMathOperator{\Stab}{Stab}
\DeclareMathOperator{\Nef}{Nef}
\DeclareMathOperator{\NE}{NE}
\DeclareMathOperator{\ch}{ch}
\DeclareMathOperator{\rk}{rk}
\DeclareMathOperator{\Pic}{Pic}
\DeclareMathOperator{\Bl}{Bl}
\DeclareMathOperator{\CrDiv}{CrDiv}
\newcommand{\Z}{\mathbb{Z}\xspace}
\newcommand{\Q}{\mathbb{Q}\xspace}
\newcommand{\cA}{\mathcal{A}\xspace}
\newcommand{\num}{\mathrm{num}}
\newcommand{\OO}{\mathcal{O}\xspace}
\newcommand{\R}{\mathbb{R}\xspace}
\newcommand{\C}{\mathbb{C}\xspace}
\newcommand{\cF}{\mathcal{F}\xspace}
\newcommand{\cT}{\mathcal{T}\xspace}
\newcommand{\cE}{\mathcal{E}\xspace}
\newcommand{\bP}{\mathbb{P}\xspace}
\newcommand{\PP}{\mathbb{P}\xspace}
\def\into{\ensuremath{\hookrightarrow}}
\def\onto{\ensuremath{\twoheadrightarrow}}
\begin{document}

\title{Nef cones of Hilbert schemes of points on surfaces}

\begin{abstract}
Let $X$ be a smooth projective surface of irregularity $0$.  The Hilbert scheme $X^{[n]}$ of $n$ points on $X$ parameterizes zero-dimensional subschemes of $X$ of length $n$.  In this paper, we discuss general methods for studying the cone of ample divisors on $X^{[n]}$.  We then use these techniques to compute the cone of ample divisors on $X^{[n]}$ for several surfaces where the cone was previously unknown.  Our examples include families of surfaces of general type and del Pezzo surfaces of degree $1$.  The methods rely on Bridgeland stability and the Positivity Lemma of Bayer and Macr\`i.
\end{abstract}

\thanks{J. Huizenga was partially supported by an NSF Mathematical Sciences Postdoctoral Research Fellowship.  B. Schmidt is partially supported by NSF grant DMS-1523496 (PI Emanuele Macr\`i) and a Presidential Fellowship of the Ohio State University.}

\author[B. Bolognese]{Barbara Bolognese}
\address{Department of Mathematics, Northeastern University, Boston, MA 02215}
\email{bolognese.b@husky.neu.edu}
\urladdr{}

\author[J. Huizenga]{Jack Huizenga}
\address{Department of Mathematics, The Pennsylvania State University, University Park, PA 16802}
\email{huizenga@psu.edu}
\urladdr{http://www.personal.psu.edu/jwh6013/}

\author[Y. Lin]{Yinbang Lin}
\address{Department of Mathematics, Northeastern University, Boston, MA 02115}
\email{lin.yinb@husky.neu.edu}
\urladdr{}

\author[E. Riedl]{Eric Riedl}
\address{Department of Mathematics, Statistics, and CS, University of Illinois at Chicago, Chicago, IL 60607}
\email{ebriedl@uic.edu}
\urladdr{}

\author[B. Schmidt]{Benjamin Schmidt}
\address{Department of Mathematics, The Ohio State University, Columbus, OH 43210}
\email{schmidt.707@osu.edu}
\urladdr{https://people.math.osu.edu/schmidt.707/}

\author[M. Woolf]{Matthew Woolf}
\address{Department of Mathematics, Statistics, and CS, University of Illinois at Chicago, Chicago, IL 60607}
\email{mwoolf@math.uic.edu}
\urladdr{http://people.uic.edu/~mwoolf/}

\author[X. Zhao]{Xiaolei Zhao}
\address{Department of Mathematics, Northeastern University, Boston, MA 02215}
\email{x.zhao@neu.edu}
\urladdr{}

\date{\today}

\subjclass[2010]{Primary: 14C05. Secondary: 14E30, 14J29, 14J60}

\maketitle
\setcounter{tocdepth}{1}
\tableofcontents

\section{Introduction}

If $X$ is a projective variety, the cone $\Amp(X)\subset N^1(X)$ of ample divisors controls the various projective embeddings of $X$.  It is one of the most important invariants of $X$, and carries detailed information about the geometry of $X$.  Its closure is the \emph{nef cone} $\Nef(X)$, which is dual to the Mori cone of curves (see for example \cite{Lazarsfeld}).  In this paper, we will study the nef cone of the Hilbert scheme of points $X^{[n]}$, where $X$ is a smooth projective surface over $\C$.

Nef divisors on Hilbert schemes of points on surfaces $X^{[n]}$ are sometimes easy to construct by classical methods.  If $L$ is an $(n-1)$-very ample line bundle on $X$, then for any $Z\in X^{[n]}$ we have an inclusion $H^0(L\otimes I_Z) \to H^0(L)$ which defines a morphism from $X^{[n]}$ to the Grassmannian $G(h^0(L)-n,h^0(L))$.  The pullback of an ample divisor on the Grassmannian is nef on $X^{[n]}$.  It is frequently possible to construct extremal nef divisors by this method.  For example, this method completely computes the nef cone of $X^{[n]}$ when $X$ is a del Pezzo surface of degree $\geq 2$ or a Hirzebruch surface (see \cite{ABCH}, \cite{BertramCoskun}).
Unfortunately, this approach to computing the nef cone is insufficient in general.  At the very least, to study nef cones of more interesting surfaces it would be necessary to study an analog of $k$-very ampleness for higher rank vector bundles, which is considerably more challenging than line bundles.

More recently, many nef cones have been computed by making use of Bridgeland stability conditions and the Positivity Lemma of Bayer and Macr\`i (see \cite{Bri07}, \cite{Bri08}, \cite{AB13}, and \cite{BayerMacri} for background on these topics, which will be reviewed in Section \ref{sec-prelim}).  Let ${\bf v} = \ch(I_Z)\in K_0(X)$, where $Z\in X^{[n]}$.  In the stability manifold $\Stab(X)$ for $X$ there is an open \emph{Gieseker chamber} $\mathcal{C}$ such that if $\sigma\in \mathcal{C}$ then $M_\sigma({\bf v})\cong X^{[n]}$, where $M_{\sigma}({\bf v})$ is the moduli space of $\sigma$-semistable objects with invariants $\bf v$.  The Positivity Lemma associates to any $\sigma \in \overline{\mathcal C}$ a nef divisor on $X^{[n]}$.  Stability conditions in the boundary $\partial \mathcal C$ frequently give rise to extremal nef divisors.  The Positivity Lemma also classifies the curves orthogonal to a nef divisor constructed in this way, and so gives a tool for checking extremality.

The stability manifold is rather large in general, so computation of the full Gieseker chamber can be unwieldy.  We deal with this problem by focusing on a small \emph{slice} of the stability manifold parameterized by a half-plane.    Up to scale, the corresponding divisors in $N^1(X^{[n]})$ form an affine ray.
The nef cone $\Nef(X^{[n]})$ is spanned by a codimension 1 subcone identified with $\Nef(X)$ and other more interesting classes which are positive on curves contracted by the Hilbert--Chow morphism.  Since $\Nef(X^{[n]})$ is convex, we can study $\Nef(X^{[n]})$ by looking at positivity properties of divisors along rays in $N^1(X^{[n]})$ starting from a class in $\Amp(X)\subset \Nef(X^{[n]})$.  The  Positivity Lemma gives us an effective criterion for testing when divisors along the ray are nef.

The slices of the stability manifold that we consider are given by a pair of divisors $(H,D)$ on $X$ with $H$ ample and $-D$ effective.  The following is a weak version of one of our main theorems.
\begin{theorem}
Let $X$ be a smooth projective surface.  If $n\gg 0$, then there is an extremal nef divisor on $X^{[n]}$ coming from the $(H,D)$-slice.  It can be explicitly computed if both the intersection pairing on $\Pic(X)$ and the set of effective classes in $\Pic(X)$ are known.  An orthogonal curve class is given by $n$ points moving in a $g_n^1$ on a curve of a particular class.
\end{theorem}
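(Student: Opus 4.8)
The plan is to realize the asserted class as the divisor $\ell_\sigma$ attached by the Positivity Lemma to a stability condition $\sigma$ lying on the \emph{Gieseker wall} of the $(H,D)$-slice, and to certify its extremality by writing down a rational curve in $X^{[n]}$ on which $\ell_\sigma$ vanishes. Throughout, $\mathbf v=\ch(I_Z)$ has rank $1$, $c_1=0$ and $\ch_2=-n$.

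\emph{Step 1: the candidate wall.} In the $(H,D)$-slice the numerical walls for $\mathbf v$ are nested semicircles. On a genuine wall, a general ideal sheaf $I_Z$ destabilized there (with $Z$ contained in a curve of some effective class $C'$) fits, after passing to reflexive hulls, into a short exact sequence $0\to\OO_X(-C')\to I_Z\to Q\to 0$ in the tilted heart, where $Q$ is the pushforward to $X$ of a rank-one torsion-free sheaf of degree $-n$ on that curve; on the largest walls the destabilizing subobject is in fact a subsheaf of this shape. Among the effective classes $C'$ for which a general member of $|C'|$ is integral and carries a $g^1_n$ --- which, by the bound on the gonality of a smooth curve, holds once $g(C')=1+\tfrac{1}{2}(C'\cdot C'+C'\cdot K_X)\lesssim 2n$, a condition met by the relevant class for $n\gg0$ --- I would select the class $C:=C'$ whose semicircular wall $W$ has the largest radius. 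This maximization uses only the intersection pairing on $\Pic(X)$ and the effective cone of $X$, which is precisely the input required.

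\emph{Step 2: $W$ is the Gieseker wall.} This is the technical core. One must show that no $I_Z$ is destabilized on any wall strictly outside $W$, so that the Gieseker chamber $\mathcal C$ is the component of the complement of the wall-and-chamber decomposition adjacent to $W$; it then follows that $M_\sigma(\mathbf v)\cong X^{[n]}$ for $\sigma\in\mathcal C$ and $\sigma\in\overline{\mathcal C}$ for $\sigma\in W$, so the Positivity Lemma applies along $W$. The standard tool is tilt stability: a subobject destabilizing some $I_Z$ on a larger wall has rank $0$ or $1$; the Bogomolov inequality, together with the requirement that it lie in the slope interval $\mathbf v$ cuts out on that wall, pins its twisted Chern character down to finitely many possibilities; on the outermost such wall its reflexive hull is again $\OO_X(-C')$ with $C'$ effective; and the wall-radius estimate from Step 1, applied to these finitely many competitors, shows the wall cannot be larger than $W$. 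The hypothesis $n\gg0$ enters essentially here: it places the radius-maximizing class $C$ in the range where the integrality, gonality and line-bundle-stability inputs are valid, and it dominates the finitely many exceptional small-degree walls. I expect this to be the main obstacle: for $X$ of higher Picard rank the first Chern character of a potential destabilizer ranges over a lattice, so the radius estimate must be made uniform over the effective cone, and producing clean uniform bounds is the crux of the whole argument.

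\emph{Step 3: the orthogonal curve and conclusion.} Fix a smooth integral curve $C\in|C|$ (Bertini) and a $g^1_n$ on it: a line bundle $A$ of degree $n$ together with a pencil $P\cong\PP^1\subseteq|A|$. For $\lambda\in P$ the divisor $Z_\lambda$ is a length-$n$ subscheme of $C\subset X$, and since the members of a pencil are pairwise distinct effective divisors, $\lambda\mapsto Z_\lambda$ embeds $\PP^1$ into $X^{[n]}$ as a rational curve $R$. Each $I_{Z_\lambda}$ sits in $0\to\OO_X(-C)\to I_{Z_\lambda}\to Q_\lambda\to0$ with $Q_\lambda$ the pushforward of $\OO_C(-Z_\lambda)$, and $\OO_C(-Z_\lambda)\cong A^{-1}$ is independent of $\lambda$ because all $Z_\lambda\in|A|$. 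For $\sigma\in W$ both $\OO_X(-C)$ and $Q_\lambda=Q$ are $\sigma$-stable of the same phase --- the line bundle because $n\gg0$ places $W$ in the region of the slice where $\OO_X(-C)$ is stable, the torsion sheaf because a rank-one torsion-free sheaf on the integral curve $C$ is stable, and the equality of phases because that is the equation defining $W$ --- so this is a Jordan--H\"older filtration and all the $I_{Z_\lambda}$ are $S$-equivalent, with associated graded $\OO_X(-C)\oplus Q$. By the curve-classification part of the Positivity Lemma, $\ell_\sigma\cdot R=0$; since $\ell_\sigma$ is nef, it is nef but not ample, hence lies on $\partial\Nef(X^{[n]})$. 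Its class in $N^1(X^{[n]})$ is read off from the coordinates of $\sigma$ on $W$ found in Step 1, and $R$ is the advertised curve of $n$ points moving in a $g^1_n$, which proves the statement.
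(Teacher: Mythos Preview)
Your overall architecture---identify the Gieseker wall in the $(H,D)$-slice, apply the Positivity Lemma there, and exhibit an orthogonal curve via a $g_n^1$---matches the paper. But the execution of Steps~1 and~2 has a real gap, and the paper closes it with two concrete devices you are missing.

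First, your Step~1 selects $C$ by maximizing the wall radius over all effective classes $C'$ whose general member carries a $g_n^1$. That is an infinite set, and you give no argument that the supremum is attained, let alone that it stabilizes as $n$ grows. The paper avoids this by introducing the finite set of \emph{critical divisors}
\[
\CrDiv(H,D)=\{-D\}\cup\{L\in\Pic(X)\text{ effective}:H\cdot L<H\cdot(-D)\},
\]
which is finite up to numerical equivalence (Hartshorne, Ex.~V.1.11). The point is that any line bundle $\OO_X(-L)$ giving a wall larger than the $\OO_X(D)$-wall must have $\mu_{H,D}(\OO_X(-L))>0$, forcing $L\in\CrDiv(H,D)$. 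So the maximization is over a \emph{finite} list depending only on $H$ and $D$, not on $n$; since each wall center is linear in $n$ (Lemma~3.3), one class wins for all $n\gg 0$.

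Second, your Step~2 asserts that a destabilizer on a larger wall has rank $0$ or $1$, and you yourself flag this as ``the crux'' without proving it. This is where the paper does the real work: Lemma~3.1 shows that any wall arising from a non-injective (in particular, any rank $\geq 2$) subobject of $I_Z$ has radius squared at most
\[
\varrho_{H,D,n}=\frac{2nd+(H\cdot D)^2-dD^2}{8d^2},
\]
which grows \emph{linearly} in $n$. On the other hand the wall for $\OO_X(D)$ passes through the origin, so $\rho_W^2=s_W^2$ grows \emph{quadratically} in $n$. Hence for $n\geq\eta_{H,D}:=\tfrac{(H\cdot D)^2+dD^2}{2d}$ the $\OO_X(D)$-wall already exceeds every higher-rank wall, and the Gieseker wall is forced to come from a rank~one subobject, hence from $\CrDiv(H,D)$. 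Your Bogomolov-plus-slope-window sketch does not produce this bound; the explicit inequality of Lemma~3.1 (proved by chasing ranks through $0\to K\to F\to I_Z\to C\to 0$ and using $\mu_{H,D}(K)\le s_W-\rho_W$, $\mu_{H,D}(F)\ge s_W+\rho_W$) is the missing ingredient.

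Your Step~3 is fine and in fact slightly sharper than the paper's baseline Lemma~3.13, which uses only $\chi(I_{Z\subset C},\OO_X(-C))=p_a(C)-1-n$ to obtain extensions once $n\ge p_a(C)+1$; the paper relegates the $g_n^1$ improvement to a remark.
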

See Section \ref{sec-nef} for more explicit statements, especially Corollary \ref{cor:Dbest} and Theorem \ref{thm:asymptotic}.  Stronger statements can also be shown under strong assumptions on $\Pic(X)$; for example, we study the Picard rank one case in detail in Section \ref{sec-generalType}.  Recall that if $X$ is surface of irregularity $q:=H^1(\OO_X)=0$ then $N^1(X^{[n]})$ is spanned by the divisor $B$ of nonreduced schemes and divisors $L^{[n]}$ induced by divisors $L\in \Pic(X)$; see Section \ref{ssec-divisorsAndCurves} for details.

\begin{theorem}\label{thm:introPic1}
Let $X$ be a smooth projective surface with $\Pic X \cong \Z H$, where $H$ is an ample divisor.  Let $a>0$ be the smallest integer such that $aH$ is effective.  If $$n \geq \max \{ a^2H^2,p_a(aH)+1\},$$ then $\Nef(X^{[n]})$ is spanned by the divisor $H^{[n]}$ and the divisor \begin{equation}\label{pic1divisor}\tag{$\ast$} \frac{1}{2} K_X^{[n]} + \left(\frac{a}{2}+\frac{n}{aH^2}\right)H^{[n]}-\frac{1}{2}B.\end{equation} An orthogonal curve class is given by letting $n$ points move in a $g_n^1$ on a curve in $X$ of class $aH$.
\end{theorem}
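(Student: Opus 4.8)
The plan is to exhibit the divisor \eqref{pic1divisor} as the divisor $D_\sigma$ attached by the Positivity Lemma to a carefully chosen stability condition $\sigma$ on the boundary of the Gieseker chamber $\mathcal C$, and simultaneously to produce an orthogonal curve realizing it as an extremal ray. First I would set up the $(H,D)$-slice with $D=0$ (so the slice is a half-plane of geometric stability conditions $\sigma_{s,t}$ with $s<0$), normalize $H$ so that $H^2$ enters the formulas cleanly, and write $\mathbf v = \ch(I_Z) = (1,0,-n)$. Pushing $\sigma$ along the ray toward the boundary, the first wall is caused by a destabilizing subobject: the natural candidate is the ideal sheaf $I_{Z'\subset C}$ of a length-$n$ subscheme $Z'$ lying on a curve $C\in|aH|$, which fits into $0\to \OO_X(-aH)\to I_Z\to I_{Z'/C}\to 0$ (or the reflexive/torsion-free variant thereof). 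One computes the wall $W$ in the $(s,t)$-plane where the central charges of $\OO_X(-aH)$ and $I_Z$ become parallel; the hypothesis $n\ge a^2H^2$ is exactly what guarantees that this numerical wall is nonempty (the relevant discriminant inequality), and $n\ge p_a(aH)+1 = \binom{aH\cdot(aH+K_X)}{2}+1$ is what guarantees that a generic $Z\in X^{[n]}$ on such a curve exists and — more importantly — that moving the $n$ points in a pencil $g^1_n$ on $C$ is geometrically possible, since $n$ is large enough relative to the genus for a suitable linear system to exist on the (possibly singular, but one passes to the normalization or uses a line bundle of the right degree) curve $C$.

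The core computation is then to translate the stability condition $\sigma$ sitting on this wall, via the Positivity Lemma, into a class in $N^1(X^{[n]})$. Using the standard identification $N^1(X^{[n]}) = \Z H^{[n]}\oplus \Z B$ (valid since $q(X)=0$), the Positivity Lemma gives $D_\sigma$ as a linear functional $\gamma \mapsto \Im\!\left(-Z_\sigma(\gamma)/Z_\sigma(\mathbf v)\right)$ evaluated on the relevant Chern characters; plugging in $\ch(\OO_{\text{pt}})$ and $\ch$ of the twist classes and simplifying yields a positive multiple of $\tfrac12 K_X^{[n]} + \bigl(\tfrac a2 + \tfrac{n}{aH^2}\bigr)H^{[n]} - \tfrac12 B$. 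The appearance of $\tfrac12 K_X$ is the usual shadow of the fact that the central charge involves $\ch_{\le 2}$ twisted by $(\text{something involving }K_X)$ through the Todd class / the $\sqrt{\mathrm{td}}$ normalization of $v_H$; I expect the bookkeeping here, rather than any conceptual point, to be the main obstacle — one must be careful about which normalization of the Mukai-type vector is in force (this should already be pinned down in Section~\ref{sec-prelim}) and about the precise numerics of the wall. The coefficient $\tfrac{n}{aH^2}$ comes directly from solving the wall equation for the slope, so it is the same quantity that controls where $W$ meets the slice.

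For extremality, I would invoke the second half of the Positivity Lemma: the curves $C_\sigma$ with $D_\sigma\cdot C_\sigma = 0$ are exactly those whose parametrized families have $S$-equivalent (not isomorphic) generic members along the wall. Taking a general curve $C\in|aH|$ with $n\ge p_a(aH)+1$ and a base-point-free $g^1_n$ on $C$ (produced, e.g., from a line bundle of degree $n$ on $C$ — here the numerical hypothesis on $n$ versus the arithmetic genus is what lets Riemann–Roch furnish a pencil), the induced curve $R\subset X^{[n]}$ parametrizing the divisors of that pencil has all its members sitting in the short exact sequence above with the \emph{same} sub- and quotient objects up to $S$-equivalence on $W$, hence $D_\sigma\cdot R = 0$. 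Since $R$ is an honest curve class and $D_\sigma$ is nef, $D_\sigma$ lies on the boundary of $\Nef(X^{[n]})$; as it is not proportional to $H^{[n]}$ (its $B$-coefficient is nonzero) and $H^{[n]}$ is on the other boundary ray of the two-dimensional cone, the two classes must span $\Nef(X^{[n]})$. Finally, that $H^{[n]}$ itself is nef and extremal is classical — it is the pullback of $\OO(1)$ under $X^{[n]}\to (\mathrm{Sym}^n X)\hookrightarrow \mathbb P$ for $H$ sufficiently ample, or one notes it is orthogonal to a fiber of the Hilbert–Chow morphism — so the proof is complete once the two boundary rays are identified.
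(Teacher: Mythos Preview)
Your outline has the right architecture---identify the wall given by $\OO_X(-aH)$, compute the induced nef divisor via the Positivity Lemma, and exhibit an orthogonal curve via a $g_n^1$ on $C\in|aH|$---but there is a genuine gap at the most delicate point. You never argue that the wall for $\OO_X(-aH)$ is the \emph{Gieseker} wall, i.e.\ the outermost wall of the Gieseker chamber. Saying ``the first wall is caused by a destabilizing subobject: the natural candidate is\ldots'' is not a proof; you must rule out (a) other rank~$1$ subobjects giving a larger wall (easy here, since $aH$ is the minimal effective class) and, more seriously, (b) higher-rank destabilizing subobjects. The paper handles (b) via an explicit radius bound for higher-rank walls (Lemma~\ref{lem:higherRank} and Corollary~\ref{cor:walls}), and the hypothesis $n\ge a^2H^2$ is precisely the inequality $n\ge \eta_{H,-aH}$ of Corollary~\ref{cor:Dbest} that forces every higher-rank wall to be strictly smaller than the $\OO_X(-aH)$ wall. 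Your claim that $n\ge a^2H^2$ ``is exactly what guarantees that this numerical wall is nonempty'' is incorrect: a short computation shows $\rho_W^2=\bigl(\tfrac{a}{2}-\tfrac{n}{aH^2}\bigr)^2\ge 0$ for all $n$, so the semicircle is always nonempty. You have misidentified what the numerical hypothesis is doing, and without the higher-rank bound there is no reason the divisor you compute is nef rather than merely lying on \emph{some} wall.

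Two smaller points. First, you work in the slice with $D=0$, whereas the paper takes $D=-aH$; both choices ultimately produce the same divisor class (the center $s_W$ shifts but the combination $-s_WH-D$ does not), but the paper's critical-divisor machinery in \S\ref{sec-nef} is phrased for antieffective $D$, so if you stick with $D=0$ you will have to redo that analysis rather than cite it. Second, your treatment of $n\ge p_a(aH)+1$ is roughly right in spirit (Riemann--Roch on $C$ produces a pencil), but the paper's Lemma~\ref{lem:curvesExist} gets there more directly by computing $\chi(I_{Z\subset C},\OO_X(-C))=p_a(C)-1-n\le -2$, which forces $\ext^1\ge 2$ and hence a positive-dimensional family of $S$-equivalent extensions; this avoids any appeal to smoothness of $C$ or Brill--Noether theory. (Also, $p_a(aH)+1$ is $\tfrac{aH\cdot(aH+K_X)}{2}+2$, not a binomial coefficient.)
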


Note that in the Picard rank $1$ case the divisor class (\ref{pic1divisor}) is frequently of the form $\lambda H^{[n]}  - \frac{1}{2}B$ for a non-integer number $\lambda\in \Q$.  Any divisor constructed from an $(n-1)$-very ample line bundle will be of the form $\lambda H^{[n]} - \frac{1}{2}B$ with $\lambda \in \Z$, so in general the edge of the nef cone cannot be obtained from line bundles in this way.  

The required lower bound on $n$ in Theorem \ref{thm:introPic1} can be improved in specific examples where special linear series on hyperplane sections are better understood.

\begin{theorem}
Let $X$ be one of the following surfaces:
\begin{enumerate}
\item\label{surfaceCase} a very general hypersurface in $\PP^3$ of degree $d\geq 4$, or
\item\label{branchedCase} a very general degree $d$ cyclic branched cover of $\PP^2$ of general type.
\end{enumerate}
In either case, $\Pic(X) \cong \Z H$ with $H$ effective.  Suppose $n\geq d-1$ in the first case, and $n\geq d$ in the second case.  Then $\Nef(X^{[n]})$ is spanned by $H^{[n]}$ and the divisor class (\ref{pic1divisor}) with $a=1$.
\end{theorem}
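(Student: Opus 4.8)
The plan is to deduce both statements from the Picard rank one analysis of Section~\ref{sec-generalType}, improving the numerical hypothesis of Theorem~\ref{thm:introPic1} by exploiting the special geometry of the curves in $|H|$ on these surfaces. First I would record the geometric input. By the Noether--Lefschetz theorem, a very general surface of degree $d\geq 4$ in $\PP^3$ has $\Pic(X)\cong\Z H$ with $H=\OO_{\PP^3}(1)|_X$ effective; by a Noether--Lefschetz type theorem for cyclic covers, a very general degree $d$ cyclic branched cover $\pi\colon X\to\PP^2$ of general type has $\Pic(X)\cong\Z H$ with $H=\pi^*\OO_{\PP^2}(1)$ effective. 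In both cases $a=1$ and $H^2=d$, and $K_X$ is a nonnegative multiple of $H$, equal to $(d-4)H$ in the first case. Plugging $a=1$ into Theorem~\ref{thm:introPic1} already gives the conclusion once $n\geq\max\{d,\,p_a(H)+1\}$, so the point is to lower the hypothesis to $n\geq d-1$ in case~\eqref{surfaceCase} and to $n\geq d$ in case~\eqref{branchedCase}.

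Two things must be checked: that the divisor (\ref{pic1divisor}) with $a=1$ is nef, and that it is extremal in $\Nef(X^{[n]})$. For extremality, note that the bound $n\geq p_a(aH)+1$ in Theorem~\ref{thm:introPic1} enters only through Riemann--Roch, to guarantee that a smooth curve in $|aH|$ carries a $g_n^1$; the associated family of length $n$ subschemes then realizes the orthogonal curve class identified in Corollary~\ref{cor:Dbest}. On these surfaces one obtains such pencils in far lower degree. In case~\eqref{surfaceCase} a general $C\in|H|$ is a smooth plane curve of degree $d$, and projection from a point $p\in C$ is a $g_{d-1}^1$ on $C$; taking the members of this pencil together with $n-(d-1)$ fixed general points of $X$ produces a rational curve in $X^{[n]}$ of the required type for every $n\geq d-1$. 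In case~\eqref{branchedCase} a general $C\in|H|$ is the preimage $\pi^{-1}(\ell)$ of a line $\ell\subset\PP^2$, and $\pi|_C\colon C\to\ell\cong\PP^1$ is a $g_d^1$; the same padding argument gives the required rational curve for every $n\geq d$. By the Positivity Lemma this curve is orthogonal to (\ref{pic1divisor}), so that class spans an extremal ray of $\Nef(X^{[n]})$ as soon as it is nef.

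For nef-ness, the wall-crossing argument of Section~\ref{sec-nef} applies directly whenever $n\geq H^2=d$: the destabilizing sequence with sub-object $\OO_X(-H)$ cuts out the boundary wall of the Gieseker chamber in the $(H,D)$-slice, and the Positivity Lemma produces (\ref{pic1divisor}). This disposes of case~\eqref{branchedCase}, and of case~\eqref{surfaceCase} when $n\geq d$. The one remaining case is a degree $d$ surface in $\PP^3$ with $n=d-1$, so that $n=H^2-1$ falls just below the range covered by the general bound $n\geq a^2H^2$; here I would argue directly. Since $\Pic(X)\cong\Z H$, the Bogomolov inequality pins the Chern characters of the possible destabilizing sub-objects along the ray down to a short finite list of subsheaves $I_W(-mH)$ with $m\geq 1$, and one checks case by case, using $H^2=n+1$ and $K_X=(d-4)H$, that none of them produces a wall strictly larger than the $\OO_X(-H)$-wall; hence that wall is still the boundary of the Gieseker chamber and (\ref{pic1divisor}) is nef. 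I expect this last verification to be the main obstacle, since it is the only step where the sharper bound is not a formal consequence of Section~\ref{sec-nef}. Granting it, $H^{[n]}$ is nef and lies on an edge of $\Nef(X^{[n]})$, being zero on the curves contracted by the Hilbert--Chow morphism, and (\ref{pic1divisor}) is a second nef extremal ray; since $N^1(X^{[n]})$ has rank two, the two classes span $\Nef(X^{[n]})$.
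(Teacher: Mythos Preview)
Your overall strategy---compute the Gieseker wall in the $(H,-H)$-slice via Section~\ref{sec-nef} and then exhibit an orthogonal curve using special pencils on $C\in|H|$---is exactly the paper's approach, but there are two concrete problems.

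\textbf{The orthogonal curve.} Your padding construction is incorrect. If $n>d-1$ (resp.\ $n>d$) and you take the $g^1_{d-1}$ (resp.\ $g^1_d$) on $C$ together with $n-(d-1)$ \emph{general points of $X$}, the resulting schemes $Z$ are not contained in $C$, so there is no injection $\OO_X(-C)\hookrightarrow I_Z$ and the $S$-equivalence argument does not apply. A direct intersection computation gives that your curve meets the class (\ref{pic1divisor}) in $n-(d-1)>0$. The fix is immediate: add the extra points as base points \emph{on $C$}, obtaining a genuine $g^1_n$ on $C$; then every $Z_t\subset C$, the quotient $I_{Z_t\subset C}$ is a fixed line bundle on $C$, and the Positivity Lemma gives orthogonality. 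This is what the paper does (implicitly) in Propositions~\ref{prop:P3} and~\ref{prop:cyclic}.

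\textbf{Nefness for $n=d-1$.} Your direct argument has a real gap: you only list rank one subobjects $I_W(-mH)$, but Corollary~\ref{cor:walls} does not exclude higher rank destabilizers here. Indeed with $D=-H$ one has $\varrho_{H,D,n}=n/(4d)=(d-1)/(4d)$, while the $\OO_X(-H)$-wall has $\rho_W^2=s_W^2=(d-2)^2/(4d^2)$, and $(d-2)^2/(4d^2)<(d-1)/(4d)$ for all $d\geq 4$; so the general higher rank bound is useless in this range. The paper handles this (and in fact the entire range $2n>d$, covering both cases at once) by a different and sharper argument, Proposition~\ref{prop:2nBound}: the $\OO_X(-H)$-wall passes through the origin of the $(\beta,\alpha)$-plane, and at $\beta=0$ one has $\Im Z_{0,\alpha}(I_Z)=H\cdot\ch_1^{-H}(I_Z)=H^2=d$, the minimal positive value attained by $\Im Z_{0,\alpha}$ on $\cA_0$. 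Hence no exact sequence in $\cA_0$ can give both sub and quotient positive imaginary part, so no wall---of any rank---lies outside the $\OO_X(-H)$-wall. This is the missing idea; once you invoke it, the separate case $n=d-1$ disappears and the proof is uniform.
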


Finally, in Section \ref{sec-delPezzo} we compute the nef cone of $X^{[n]}$ where $X$ is a smooth del Pezzo surface of degree $1$ and $n\geq 2$ is arbitrary.  This computation was an open problem posed by Bertram and Coskun in \cite{BertramCoskun}; they noted that the method of $k$-very ample line bundles would not be sufficient to prove the expected answer.  Since $X$ has Picard rank $9$, this computation makes full use of the general methods developed in Section \ref{sec-nef}.  If $C\subset X$ is a reduced, irreducible curve which admits a $g_n^1$, we write $C_{[n]}$ for the curve in the Hilbert scheme $X^{[n]}$ given by letting $n$ points move in a $g_n^1$ on $C$.

\begin{theorem}
Let $X$ be a smooth del Pezzo surface of degree $1$.  The Mori cone of curves $\NE(X^{[n]})$ is spanned by the $240$ classes $E_{[n]}$ given by $(-1)$-curves $E\subset X$, the class of a curve contracted by the Hilbert--Chow morphism, and the class $F_{[n]}$, where $F\in |{-K_X}|$ is an anticanonical curve.  The nef cone is determined by duality.
\end{theorem}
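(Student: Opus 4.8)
The plan is to show $\NE(X^{[n]})=Q$, where $Q\subseteq\NE(X^{[n]})$ denotes the cone generated by the class $\beta$ of a curve contracted by the Hilbert--Chow morphism $X^{[n]}\to X^{(n)}$, by the $240$ classes $E_{[n]}$ attached to the $(-1)$-curves $E\subset X$, and by $F_{[n]}$ for $F\in|{-}K_X|$; the nef cone statement is then immediate by duality. First I would recall the geometry of a degree one del Pezzo surface: it is $\PP^2$ blown up at $8$ general points, so $\Pic(X)$ has rank $9$; $-K_X$ is ample with $K_X^2=1$; $|{-}K_X|$ is a pencil of arithmetic genus one curves, a general member $F$ being a smooth elliptic curve; and $\NE(X)$ is spanned by the $240$ $(-1)$-curves. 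The inclusion $Q\subseteq\NE(X^{[n]})$ is the easy half: $\beta$ is visibly effective, each $(-1)$-curve $E\cong\PP^1$ carries a base-point-free $g^1_n$ for all $n$, and $F$ carries one for all $n\ge 2$, so each of these $242$ classes is represented by an actual curve. A Riemann--Hurwitz computation gives $F_{[n]}\cdot L^{[n]}=L\cdot(-K_X)$ and $F_{[n]}\cdot B=2n$, together with analogous formulas for $E_{[n]}$, so these classes are well-defined independently of the chosen pencils.

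For the reverse inclusion it is equivalent to show that every $A\in Q^{\vee}$ is nef. Using the description of $N^1(X^{[n]})$ from Section~\ref{ssec-divisorsAndCurves}, write $A=L^{[n]}-\tfrac b2 B$ with $L\in N^1(X)$ and $b\in\R$; the inequality $A\cdot\beta\ge 0$ forces $b\ge 0$. When $b=0$, the conditions $A\cdot E_{[n]}\ge 0$ become $L\cdot E\ge 0$ for all $(-1)$-curves $E$, so $L$ is nef on $X$ and $A=L^{[n]}$ is nef; this recovers the face of $\Nef(X^{[n]})$ identified with $\Nef(X)$. The content of the theorem is the case $b>0$, which I would handle with the Bridgeland-theoretic methods of Section~\ref{sec-nef}.

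For a pair $(H,D)$ with $H$ ample on $X$ and $-D$ effective, the associated slice of $\Stab(X)$ is a half-plane, and the Positivity Lemma attaches to the boundary point of the Gieseker chamber a nef divisor $D_{H,D}$ on $X^{[n]}$ orthogonal to exactly the classes of families of ideal sheaves that become $S$-equivalent there. The Gieseker wall is computed by finding the first destabilizing sequences $0\to\OO_X(-C)\to I_Z\to I_{Z/C}\to 0$ with $Z\subset C$; the crucial claim is that for the slices relevant here the class of $C$ is either that of a $(-1)$-curve or the anticanonical class, and no other effective class on $X$ produces a wall as large. Granting this, the $S$-equivalence mechanism identifies the orthogonal curves: when $C$ is a $(-1)$-curve all length-$n$ subschemes of $C\cong\PP^1$ lie in the single system $|\OO_{\PP^1}(n)|$, so $D_{H,D}$ contracts $\mathrm{Sym}^n C\hookrightarrow X^{[n]}$ and in particular is orthogonal to $E_{[n]}$; when $C\in|{-}K_X|$ the subschemes $Z\subset C$ with $\OO_C(Z)$ fixed form a linear system whose lines are $g^1_n$'s on $C$, so $D_{H,D}$ is orthogonal to $F_{[n]}$, and this $D_{H,D}$ is exactly the class \eqref{pic1divisor} with $aH$ replaced by $-K_X$, consistent with the bound $n\ge\max\{(-K_X)^2,p_a(-K_X)+1\}=2$. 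Applying Corollary~\ref{cor:Dbest} and Theorem~\ref{thm:asymptotic} for the slices governed by the anticanonical class, and the analogous computation with $H$ taken near the facet of $\Nef(X)$ dual to a given $(-1)$-curve for the slices governed by that curve, produces one nef divisor orthogonal to $F_{[n]}$ together with, for each $(-1)$-curve, a nef divisor orthogonal to $E_{[n]}$. I would conclude by checking, using the explicit intersection form on $X$ and the known generators of $\NE(X)$, that these nef divisors and the face $\Nef(X)$ from the case $b=0$ have convex hull equal to all of $Q^{\vee}$, giving $Q^{\vee}\subseteq\Nef(X^{[n]})$ and hence $\NE(X^{[n]})=Q$.

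The main obstacle is the wall analysis underlying the crucial claim: one must prove that no effective curve class on $X$ other than the $(-1)$-curves and the anticanonical class can produce a Gieseker wall at least as large in the relevant slices. Since a degree one del Pezzo surface carries effective curve classes of unbounded degree, this is not a finite check a priori; it requires turning the numerical wall inequalities of Section~\ref{sec-nef} into an effective bound on the competing classes $C$ and then using adjunction together with the explicit effective cone of $X$ to eliminate all but the claimed ones. Making this argument uniform for every $n\ge 2$, rather than only for $n\gg 0$ where Theorem~\ref{thm:asymptotic} already suffices, is the delicate part.
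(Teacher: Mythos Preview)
Your outline assembles the right ingredients but misses both the organizational idea and the technical trick that make the paper's argument go through.

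First, the organization. The paper does \emph{not} try to produce, for each generator $\gamma$ of $Q$, a nef divisor orthogonal to $\gamma$ and then take a convex hull. Instead it identifies the extremal rays of $Q^\vee=\Lambda'$ directly: those not lying in $\Nef(X)$ are in bijection with the extremal rays of $\Nef(X)$, the ray attached to an extremal nef $N$ being the unique $F_{[n]}$-orthogonal ray in the cone spanned by $N^{[n]}$ and $(n-1)(-K_X)^{[n]}-\tfrac12 B$. Since the Weyl group of $E_8$ acts transitively on the extremal rays of $\Nef(X)$, it suffices to show that \emph{one} such class, namely $(n-1)(-K_X)^{[n]}+\tfrac12(H-E_1)^{[n]}-\tfrac12 B$, is nef. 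Your plan would not reach the extremal rays: for instance the slice $(-K_X,K_X)$, which is the natural ``anticanonical'' slice you describe, has $\CrDiv(-K_X,K_X)=\{-K_X\}$ and yields the nef class $n(-K_X)^{[n]}-\tfrac12 B$, which sits in the \emph{relative interior} of the $F_{[n]}$-orthogonal facet of $\Lambda'$. A collection of such interior points together with $\Nef(X)$ will not have convex hull equal to $\Lambda'$, so the final check you propose would fail.

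Second, the uniformity in $n$ that you flag as the main obstacle is resolved in the paper by a device you do not have: the polarization is chosen to \emph{depend on $n$}. Taking $P=(n-\tfrac32)(-K_X)+\tfrac12(H-E_1)$ and $D=K_X$, one computes $\eta_{P,K_X}<n$ for every $n\ge 2$, so Corollary~\ref{cor:Dbest} applies for all $n$ at once and no asymptotic argument is needed. With this $P$ the set $\CrDiv(P,K_X)$ is finite and explicit (it consists of $-K_X$ and the $(-1)$-curves $L$ with $L\cdot(H-E_1)\le 1$, plus a handful of extra classes when $n=2$), and a direct comparison of wall centers via Lemma~\ref{lem:rk1Center} shows that $\OO_X(K_X)$ gives the Gieseker wall, with center $(-1,0)$. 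Proposition~\ref{prop:classComputation} then identifies the resulting nef divisor with the desired extremal class. Note also that this single wall coincides with the walls for the $(-1)$-curves $L$ with $L\cdot(H-E_1)=0$, so the extremal ray is simultaneously orthogonal to $F_{[n]}$ and to several of the $E_{[n]}$; this is why treating the generators of $Q$ one at a time is the wrong decomposition.
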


Many previous authors have used Bridgeland stability conditions to study nef cones and wall-crossing for Hilbert schemes $X^{[n]}$ and moduli spaces of sheaves $M_H({\bf v})$ for various classes of surfaces.  For instance, the program was studied for $\PP^2$ in \cite{ABCH}, \cite{CoskunHuizenga2}, \cite{BerMarWan}, and \cite{LiZhao}, for Hirzebruch and del Pezzo surfaces in \cite{BertramCoskun}, abelian surfaces in \cite{YanagidaYoshioka} and \cite{MM}, K3 surfaces in \cite{BayerMacri}, \cite{BayerMacri2} and \cite{HassettTschinkel}, and Enriques surfaces in \cite{Nuer}.  Our results unify several of these approaches.  Additionally, nef cones were classically studied in the context of $k$-very ample line bundles in papers such as \cite{EGH}, \cite{BelSomGot}, \cite{BelFraSom}, and \cite{CatGot}.

\subsection*{Acknowledgements}
This work was initiated at the 2015 Algebraic Geometry Bootcamp preceding the Algebraic Geometry Summer Research Institute organized by the AMS and the University of Utah. We would like to thank the organizers of both programs for providing the wonderful environment where this collaboration could happen.  Additionally, we would like to thank Arend Bayer, Izzet Coskun, and Emanuele Macr\`i for many valuable discussions on Bridgeland stability.

\section{Preliminaries}
\label{sec-prelim}

Throughout the paper, we let $X$ be a smooth projective surface over $\C$.

\subsection{Divisors and curves on $X^{[n]}$}\label{ssec-divisorsAndCurves} For simplicity we assume that $X$ has irregularity $q=h^1(\OO_X)=0$ in this subsection.  By work of Fogarty \cite{Fogarty}, the Hilbert scheme $X^{[n]}$ is a smooth projective variety of dimension $2n$ which resolves the singularities in the symmetric product $X^{(n)}$ via the Hilbert--Chow morphism $X^{[n]}\to X^{(n)}$.  A line bundle $L$ on $X$ induces the $S_n$-equivariant line bundle $L^{\boxtimes n}$ on $X^n$ which descends to a line bundle $L^{(n)}$ on the symmetric product $X^{(n)}$.  The pullback of $L^{(n)}$ by the Hilbert--Chow morphism $X^{[n]}\to X^{(n)}$ defines a line bundle on $X^{[n]}$ which we will denote by $L^{[n]}$.  Intuitively, if $L\cong \OO_X(D)$ for a reduced effective divisor $D\subset X$, then $L^{[n]}$ can be represented by the divisor $D^{[n]}$ of schemes $Z\subset X$ which meet $D$.

Fogarty shows that $$\Pic(X^{[n]})\cong \Pic(X) \oplus \Z (B/2),$$ where $\Pic(X) \subset \Pic(X^{[n]})$ is embedded by $L\mapsto L^{[n]}$ and $B$ is the locus of non-reduced schemes, i.e., the exceptional divisor of the Hilbert--Chow morphism \cite{Fogarty2}.  Tensoring by the real numbers, the Neron--Severi space $N^1(X^{[n]})$ is therefore spanned by $N^1(X)$ and $B$.

There are also curve classes in $X^{[n]}$ induced by curves in $X$.  Two different constructions are immediate.  Let $C\subset X$ be a reduced and irreducible curve.  
\begin{enumerate}
\item There is a curve $\tilde C_{[n]}$ in $X^{[n]}$ given by fixing $n-1$ general points of $X$ and letting an $n$th point move along $C$.  
\item If $C$ admits a $g_n^1$, i.e., a degree $n$ map to $\PP^1$, then the fibers of $C\to \PP^1$ give a rational curve $\PP^1\to X^{[n]}$.  We write $C_{[n]}$ for this class.
\end{enumerate}
These constructions preserve intersection numbers, in the sense that if $D\subset X$ is a divisor and $C\subset X$ is a curve then $$D^{[n]}\cdot \tilde C_{[n]} = D^{[n]}\cdot C_{[n]} = D\cdot C.$$ 

Part of the nef cone $\Nef(X^{[n]})$ is easily described in terms of the nef cone of $X$.  If $D$ is an ample divisor, then $D^{(n)}$ is ample so $D^{[n]}$ is nef.  In the limit, we find that if $D$ is nef then $D^{[n]}$ is nef.  Conversely, if $D$ is not nef then there is an irreducible curve  $C$ with $D\cdot C<0$, so $D^{[n]}\cdot \tilde{C}_{[n]}<0$ and $D^{[n]}$ is not nef. Under the Fogarty isomorphism, $$\Nef(X^{[n]}) \cap N^1(X) = \Nef(X).$$  The hyperplane $N^1(X)\subset N^1(X^{[n]})$ is orthogonal to any curve contracted by the Hilbert--Chow morphism, so all the divisors in $\Nef(X)\subset \Nef(X^{[n]})$ are extremal.  Since $B$ is the exceptional locus of the Hilbert--Chow morphism, we see that any nef class must have non-positive coefficient of $B$.  After scaling, then, we see that computation of the cone $\Nef(X^{[n]})$  reduces to describing the nef classes of the form $L^{[n]}-\frac{1}{2}B$ lying outside $\Nef(X)\subset \Nef(X^{[n]})$.

\subsection{Bridgeland stability conditions}
We now recall some basic definitions and properties of Bridgeland stability conditions. We fix a polarization $H \in \Pic(X)_{\R}$. For any divisor $D \in \Pic(X)_{\R}$ the twisted Chern character $\ch^{D} = e^{-D} \ch$ can be expanded as
\begin{align*}
\ch_0^{D} &= \ch_0, \\
\ch_1^{D} &= \ch_1 - D  \ch_0, \\
\ch_2^{D} &= \ch_2 - D \cdot \ch_1 + \frac{D^2}{2} \ch_0.
\end{align*}

Recall that a Bridgeland stability condition is a pair $\sigma = (Z, \cA)$ where $Z: K_0(X) \to \C$ is an additive homomorphism and $\cA \subset D^b(X)$ is the heart of a bounded t-structure. In particular, $\cA$ is an abelian category. Moreover, $Z$ maps any non trivial object in $\cA$ to the upper half plane or the negative real line. The $\sigma$-slope function is defined by
\[\nu_\sigma = -\frac{\Re Z}{\Im Z},\]
and $\sigma$-(semi)stability of objects of $\cA$ is defined in terms of this slope function.  More technical requirements are the existence of Harder--Narasimhan filtrations and the support property. We recommend Bridgeland's article \cite{Bri07} for a more precise definition. The support property is well explained in Appendix A of \cite{BMS14}.

In the case of surfaces, Bridgeland \cite{Bri08} and Arcara--Bertram \cite{AB13} showed how to construct Bridgeland stability conditions in a slice corresponding to a choice of an ample divisor $H\in \Pic(X)_\R$ and arbitrary twisting divisor $D\in \Pic(X)_\R$. The classical Mumford slope function for twisted Chern characters is defined by
\[\mu_{H,D} = \frac{H \cdot\ch_1^{D}}{H^2 \ch_0^{D}},\]
where torsion sheaves are interpreted as having positive infinite slope.
Given a real number $\beta \in \R$ there are two categories defined as
\begin{align*}
\cT_{\beta} &= \{E \in \Coh(X) : \text{any quotient $E \onto G$
satisfies $\mu_{H,D}(G) > \beta$} \}, \\
\cF_{\beta} &=  \{E \in \Coh(X) : \text{any subsheaf $F \into E$
satisfies $\mu_{H,D}(F) \leq \beta$} \}.
\end{align*}
A new heart of a bounded t-structure is defined as the extension
closure $\cA_{\beta} := \langle \cF_{\beta}[1], \cT_{\beta} \rangle$. We fix an additional positive real number $\alpha$ and define the homomorphism as
\[Z_{\beta,\alpha} = -\ch^{D + \beta H}_2 + \frac{\alpha^2 H^2}{2} \ch_0^{D + \beta H} + i H \cdot \ch_1^{D + \beta H}.\]  The pair $\sigma_{\beta,\alpha} := (Z_{\beta,\alpha},\cA_{\beta})$ is then a Bridgeland stability condition.  The $(H,D)$-\emph{slice} of stability conditions is the family of stability conditions $\{\sigma_{\beta,\alpha}:\beta,\alpha\in \R, \alpha>0\}$ parameterized by the $(\beta,\alpha)$ upper half plane.

\begin{definition} Fix a set of invariants ${\bf v}\in K_{0}(X)$.
\begin{enumerate} 
\item Let ${\bf w}\in K_0(X)$ be a vector such that ${\bf v}$ and ${\bf w}$ do not have the same $\sigma_{\beta,\alpha}$-slope everywhere in the $(H,D)$-slice.  The \textit{numerical wall} for $\bf v$ given by $\bf w$ is the set of points $(\beta, \alpha)$ where $\bf v$ and $\bf w$ have the same $\sigma_{\beta,\alpha}$-slope.
\item A numerical wall for $\bf v$ given by a vector $\bf w$ as above is a \emph{wall} (or \emph{actual wall}) if there is a point $(\beta,\alpha)$ on the wall and an exact sequence $0\to F\to E \to G\to 0$ in $\cA_\beta$, where $\ch F = {\bf w}$, $\ch E = {\bf v}$, and $F,E,G$ are $\sigma_{\beta,\alpha}$-semistable objects (of the same $\sigma_{\beta,\alpha}$-slope).
\end{enumerate}
\end{definition}

We write $K_{\num}(X)$ for the numerical Grothendieck group of classes in $K_0(X)$ modulo numerical equivalence. Note that numerical walls for ${\bf v}\in K_0(X)$ only depend on the numerical class of ${\bf v}$, while actual walls a priori depend on $c_1({\bf v})\in \Pic(X)$.  The structure of walls in a slice is heavily restricted by Bertram's Nested Wall Theorem. This was first observed for Picard rank one with $D = 0$, but the proof immediately generalizes by replacing $\ch$ by $\ch^D$ everywhere.

\begin{theorem}[\cite{Mac14}]
Let ${\bf v}\in K_{0}(X)$. 

\begin{enumerate}
\item  Numerical walls for $\bf v$ can either be semicircles with center on the $\beta$-axis or the unique vertical line given by $\beta = \mu_{H,D}({\bf v})$. Moreover, the apex of each semicircle lies on the hyperbola $\Re Z_{\beta,\alpha}({\bf v}) = 0$.

\item  Numerical walls for $\bf v$ are disjoint, and the semicircular walls on either side of the vertical wall are nested.

\item  If $W_1$ and $W_2$ are two semicircular numerical walls left of the vertical wall with centers $(s_{W_1},0)$ and $(s_{W_2},0)$, then $W_2$ is nested inside $W_1$ if and only if $s_{W_1}<s_{W_2}$. 

\item Suppose $0\to F\to E\to G\to 0$ is an exact sequence destabilizing an object $E$ with $\ch(E) = {\bf v}$ at a point $(\beta,\alpha)$ on a numerical wall $W$, in the sense that all three objects have the same $\sigma_{\beta,\alpha}$-slope and this is an exact sequence in $\cA_{\beta}$.  Then it is an exact sequence of objects in $\cA_{\beta'}$ with the same $\sigma_{\beta',\alpha'}$-slope for all $(\beta',\alpha')\in W$. That is, $E$ is destabilized along the entire wall. 
\end{enumerate}
\end{theorem}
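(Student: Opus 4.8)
The final statement is Bertram's Nested Wall Theorem (cited as \cite{Mac14}). All four parts concern \emph{numerical} walls, which depend only on the slope function $\nu_{\sigma_{\beta,\alpha}} = -\Re Z_{\beta,\alpha}/\Im Z_{\beta,\alpha}$, so the strategy is to reduce everything to explicit algebra in the two quantities $\Re Z_{\beta,\alpha}$ and $\Im Z_{\beta,\alpha}$ as functions of $(\beta,\alpha)$. The key preliminary computation is to expand $Z_{\beta,\alpha}(\mathbf{w})$ for a class $\mathbf{w}$ with twisted invariants $(\ch_0^D,\ch_1^D,\ch_2^D)=:(r,c,d)$. Writing out the $e^{-\beta H}$ twist, one finds that $\Im Z_{\beta,\alpha}(\mathbf w) = H\cdot c - \beta H^2 r$ is affine in $\beta$ and independent of $\alpha$, while $\Re Z_{\beta,\alpha}(\mathbf w) = -d + \beta (H\cdot c) - \tfrac{\beta^2 H^2}{2} r + \tfrac{\alpha^2 H^2}{2} r$ is quadratic in $\beta$ and quadratic in $\alpha$. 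The crucial structural observation is that the $\alpha$-dependence of $\Re Z$ enters only through the term $\tfrac{\alpha^2 H^2}{2}\,\ch_0^D$, and the $\alpha$-dependence of $\Im Z$ is absent entirely.

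\textbf{Part (1): shape of walls.} Two classes $\mathbf v,\mathbf w$ share a $\sigma_{\beta,\alpha}$-slope exactly when $\Re Z(\mathbf v)\,\Im Z(\mathbf w) = \Re Z(\mathbf w)\,\Im Z(\mathbf v)$. First I would substitute the expansions above and collect terms by powers of $\alpha$ and $\beta$. Because $\Im Z$ has no $\alpha$-dependence and $\Re Z$ depends on $\alpha$ only through $\tfrac{\alpha^2 H^2}{2}r$, the coefficient of $\alpha^2$ in the wall equation is $\tfrac{H^2}{2}(\ch_0^D(\mathbf w)\,\Im Z(\mathbf v) - \ch_0^D(\mathbf v)\,\Im Z(\mathbf w))$, which is affine in $\beta$; meanwhile the remaining $\alpha^0$ part is a quadratic in $\beta$. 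When the $\alpha^2$-coefficient is identically zero (the degenerate case where it has no $\beta$-dependence, corresponding to $\mathbf v$ and $\mathbf w$ having proportional $\ch_0$) the equation reduces to a linear equation in $\beta$ alone, giving the vertical line $\beta = \mu_{H,D}(\mathbf v)$. Otherwise one solves for $\alpha^2$ as a rational function of $\beta$ and checks by completing the square that the locus is a circle centered on the $\beta$-axis; the apex condition $\tfrac{d}{d\beta}\alpha^2 = 0$ then pins the center, and substituting back shows the apex satisfies $\Re Z_{\beta,\alpha}(\mathbf v)=0$, which is the stated hyperbola.

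\textbf{Parts (2) and (3): nesting and ordering.} Here the plan is to fix two semicircular walls $W_1,W_2$ for the \emph{same} class $\mathbf v$, given by classes $\mathbf w_1,\mathbf w_2$, and to show their defining circles cannot meet in the upper half-plane. The cleanest route is to suppose for contradiction that they intersect at a point $(\beta_0,\alpha_0)$. At such a point $\mathbf v$, $\mathbf w_1$, $\mathbf w_2$ all have the same $\sigma_{\beta_0,\alpha_0}$-slope, hence $Z_{\beta_0,\alpha_0}(\mathbf v),Z(\mathbf w_1),Z(\mathbf w_2)$ are all real multiples of a common complex number. I would argue this forces a numerical degeneracy: three classes collinear in the $Z$-plane, together with the support property or a rank computation, constrains the classes $\mathbf w_1,\mathbf w_2,\mathbf v$ to be linearly dependent in a way that makes the two walls coincide rather than cross. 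For the ordering in part (3), once disjointness is known the walls are totally ordered by containment of their enclosed disks, and I would compute the center $s_W$ of a semicircular wall explicitly as a function of the pairing data and verify monotonicity: as $s_W$ increases toward the vertical wall's $\beta$-coordinate from the left, the radius shrinks, so larger $s_W$ means nested inside. This is a direct sign-chasing argument once the center formula is in hand.

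\textbf{Part (4): destabilization along the entire wall.} This is the part I expect to be the main obstacle, since it is not purely numerical — it upgrades a single destabilizing sequence at one point to a destabilization along the whole wall, and this requires controlling the abelian categories $\cA_\beta$ as $\beta$ varies. The plan is: given the exact sequence $0\to F\to E\to G\to 0$ in $\cA_\beta$ with all three of the same slope at $(\beta,\alpha)\in W$, I would first use part (2) to note that $F$ and $G$ define numerical walls for $\mathbf v=\ch E$ that contain the point $(\beta,\alpha)$, and that by the nesting/disjointness these walls must coincide with $W$ itself — so $F,G,E$ share a slope at \emph{every} point of $W$, giving the numerical half of the claim. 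The genuinely delicate step is showing the sequence remains a short exact sequence of objects \emph{in} $\cA_{\beta'}$ for every $(\beta',\alpha')\in W$, i.e. that $F,E,G$ all stay in the heart as $\beta'$ moves along $W$. For this I would track the phases: an object stays in $\cA_{\beta'}$ precisely as long as it does not cross the boundary where $\Im Z$ changes sign, and since on $W$ all three objects maintain equal slope and the semistability is preserved by the Nested Wall structure, none of their Harder--Narasimhan factors can exit the heart before the wall is exited. The cleanest formulation uses the fact, standard in this setting, that the heart $\cA_{\beta'}$ is locally constant in the region bounded by walls, so the sequence persists; I would cite \cite{Bri08} and \cite{Mac14} for this invariance and only need to verify that the arc of $W$ stays within a single such region.
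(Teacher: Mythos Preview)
The paper does not prove this theorem. It is stated with the citation \cite{Mac14} and the remark that the Picard rank one, $D=0$ proof ``immediately generalizes by replacing $\ch$ by $\ch^D$ everywhere.'' There is therefore no in-paper argument to compare your proposal against; what follows is an assessment of your sketch on its own terms.

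Your outline for (1) is on the right track but contains a slip: the coefficient of $\alpha^2$ in the wall equation is $\tfrac{H^2}{2}\bigl(r_{\mathbf v}(H\cdot c_{\mathbf w})-r_{\mathbf w}(H\cdot c_{\mathbf v})\bigr)$, which is a \emph{constant} in $\beta$ (the linear terms cancel), not merely affine; and its vanishing is the condition $\mu_{H,D}(\mathbf v)=\mu_{H,D}(\mathbf w)$, not ``proportional $\ch_0$.'' The same computation shows the $\beta^2$ coefficient equals the $\alpha^2$ coefficient, which is what forces the locus to be a circle rather than a general conic; you should say this rather than ``solve for $\alpha^2$ as a rational function of $\beta$.''

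For (2) your proposed contradiction via the support property is the wrong tool and would not close the argument. The clean route, which the paper in fact records just after Proposition~\ref{prop:classComputation}, is that for any $(\beta_0,\alpha_0)$ not on the vertical wall one has $s_W=\nu_{\sigma_{\beta_0,\alpha_0}}(\mathbf v)+\beta_0$. Thus the center of \emph{any} semicircular wall for $\mathbf v$ passing through $(\beta_0,\alpha_0)$ is determined by $\mathbf v$ and the point alone, so at most one such wall passes through each point; disjointness and (3) follow immediately.

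For (4) your final paragraph is too vague at the only nontrivial step. The numerical half (equal slopes along all of $W$) is automatic, as you note. For persistence in the heart, the relevant mechanism is not ``the heart is locally constant between walls'' (that is what you are trying to prove). Rather: $\Im Z_{\beta',\alpha'}$ is independent of $\alpha'$ and linear in $\beta'$; along $W$ the three values $Z(F),Z(E),Z(G)$ lie on a common ray, so $\Im Z(F)$ and $\Im Z(G)$ keep the same sign as $\Im Z(E)$, and one checks directly (using the HN filtrations of the cohomology sheaves of $F$ and $G$) that neither object can exit $\cA_{\beta'}$ before the wall itself terminates on the $\beta$-axis. This is the argument in \cite{Mac14}; your sketch gestures at it but does not isolate the linearity of $\Im Z$ in $\beta$, which is the point.
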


\subsection{Slope and discriminant} The explicit geometry of walls is frequently best understood in terms of slopes and discriminants; the formulas presented here previously appeared in \cite{CoskunHuizenga} in the context of $\PP^2$.  When the rank is nonzero, we define
$$
\Delta_{H,D} = \frac{1}{2} \mu_{H,D}^2 - \frac{\ch_2^D}{H^2 \ch_0^D}.
$$
The Bogomolov inequality gives $\Delta_{H,D}(E)\geq 0$ whenever $E$ is an $(H,D)$-twisted Giesker semistable sheaf.  Observe that $\Delta_{H,D+\beta H} = \Delta_{H,D}$ for every $\beta\in \R$.  A straightforward calculation shows that for vectors of nonzero rank the slope function for the stability condition $\sigma_{\beta,\alpha}$ in the $(H,D)$-slice is given by \begin{equation}\label{slopeFormula}\nu_{\sigma_{\beta,\alpha}} = \frac{(\mu_{H,D}-\beta)^2-\alpha^2-2\Delta_{H,D}}{(\mu_{H,D}-\beta)}\end{equation}

Suppose ${\bf v}, {\bf w}$ are two classes with positive rank, and let their slopes and discriminants be $\mu_{H,D},\Delta_{H,D}$ and $\mu'_{H,D},\Delta'_{H,D}$, respectively.  The numerical wall $W$ in the $(H,D)$-slice where $\bf v$ and $\bf w$ have the same slope is computed as follows.

\begin{itemize}
\item If $\mu_{H,D} = \mu_{H,D}'$ and $\Delta_{H,D}=\Delta_{H,D}'$, then $\bf v$ and $\bf w$ have the same slope everywhere in the slice, so there is no numerical wall.

\item If $\mu_{H,D} = \mu_{H,D}'$ and $\Delta_{H,D} \neq \Delta_{H,D}'$, then $W$ is the vertical wall $\beta = \mu_{H,D}$.

\item If $\mu_{H,D} \neq \mu_{H,D}'$, then Equation (\ref{slopeFormula}) implies $W$ is the semicircle with center $(s_W,0)$ and radius $\rho_W$, where
\begin{align}
\label{centerFormula}s_W &= \frac{1}{2} (\mu_{H,D}+\mu'_{H,D}) - \frac{\Delta_{H,D}-\Delta'_{H,D}}{\mu_{H,D}-\mu'_{H,D}}, \\
\label{rhoFormula} 
\rho_W^2 &= (s_W-\mu_{H,D})^2 - 2\Delta_{H,D}
\end{align}
provided that the expression defining $\rho_W^2$ is positive; if it is negative then the wall is empty. 
\end{itemize}

 Notice that if $\Delta_{H,D}({\bf v})\geq 0$ then numerical walls for $\bf v$ left of the vertical wall accumulate at the point \begin{equation}\label{accumulationPoint}\left(\mu_{H,D}({\bf v})-\sqrt{2 \Delta_{H,D}({\bf v})},0\right)\end{equation} as their radii go to $0$. 

\subsection{Nef divisors and the Positivity Lemma} 

In this section, we describe the Positivity Lemma of Bayer and Macr\`{i}. Let $\sigma = (Z, \cA)$ be a stability condition on $X$, ${\bf v} \in K_{\num}(X)$ and $S$ a proper algebraic space of finite type over $\C$. Let $\cE \in D^b(X \times S)$ be a flat family of $\sigma$-semistable objects of class ${\bf v}$, i.e., for every $
\C$-point $p \in S$, the derived restriction $\cE|_{\pi_S^{-1}(\{p\})}$ is $\sigma$-semistable of class ${\bf v}$. Then Bayer and Macr\`{i} define a numerical divisor class $D_{\sigma,\cE}\in N^1(S)$ on the space $S$ by assigning its intersection with any projective integral curve $C \subset S$:
\[D_{\sigma,\cE} \cdot C = \Im \left(  -\frac{Z((p_X)_* \cE |_{C \times X})}{Z({\bf v})} \right).\] The Positivity Lemma shows that this divisor inherits positivity properties from the homomorphism $Z$, and classifies the curve classes orthogonal to the divisor. Recall that two $\sigma$-semistable objects are $S$-{\it equivalent} with respect to $\sigma$ if their sets of Jordan--H\"older factors are the same.

\begin{theorem}[{Positivity Lemma, \cite[Lemma 3.3]{BayerMacri}}]
The divisor $D_{\sigma,\cE}\in N^1(S)$ is nef. Moreover, if $C\subset S$ is a projective integral curve then $D_{\sigma,\cE} \cdot C = 0$ if and only if two general objects parameterized by $C$ are $S$-equivalent with respect to $\sigma$.
\end{theorem}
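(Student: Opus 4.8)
The plan is to reduce to the case where $S$ is a smooth projective curve, normalize the central charge so that $Z({\bf v}) = -1$, and then study the Fourier--Mukai type functor $\Phi_{\cE}(-) := (p_X)_*\big(\cE \otimes p_C^*(-)\big)\colon D^b(C) \to D^b(X)$, where $p_X, p_C$ are the two projections from $C \times X$. \emph{Step 1 (reductions).} Since $N^1(S)$ is dual to $N_1(S)$ and the latter is spanned by classes of integral curves, and since the intersection number $D_{\sigma,\cE}\cdot C$ is unchanged after pulling the family back along the normalization $\nu\colon \tilde C \to C$ of an integral curve $C \subset S$ (use $\nu_*[\tilde C] = [C]$ and the projection formula), we may take $S = C$ a smooth projective integral curve carrying a flat family $\cE$ of $\sigma$-semistable objects of class ${\bf v}$. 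That the assignment $C \mapsto \Im\big({-Z((p_X)_*\cE|_{C\times X})}/{Z({\bf v})}\big)$ does define a class in $N^1(S)$ is part of the construction: $\alpha \mapsto Z(\Phi_\cE(\alpha))$ is an additive homomorphism $K_0(C)\to\C$ factoring through the Chern character, so Grothendieck--Riemann--Roch writes it as intersection with a fixed divisor. Finally, acting by $\widetilde{GL}^+(2,\R)$ on $\sigma$ we may assume $Z({\bf v}) = -1$; this leaves the set of $\sigma$-semistable objects unchanged and multiplies every number $D_{\sigma,\cE}\cdot C$ by one fixed positive constant, so it affects neither nefness nor the orthogonality criterion. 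After this normalization every $\sigma$-semistable object of class ${\bf v}$ --- in particular every fibre $\cE_c$ --- has phase $1$, i.e.\ lies in $\mathcal{P}(1) \subset \cA$, where $\mathcal{P}$ is the slicing of $\sigma$, and $D_{\sigma,\cE}\cdot C = \Im Z(\Phi_\cE(\cO_C))$.

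\emph{Step 2 (nefness).} First note that $\Im Z(\Phi_\cE(L))$ does not depend on the line bundle $L$ on $C$: for a point $p$, the sequence $0 \to \cO_C(D-p) \to \cO_C(D) \to \cO_p \to 0$, together with $\Phi_\cE(\cO_p) = \cE_p$ and $Z(\cE_p) = Z({\bf v}) = -1 \in \R$, shows the imaginary part is unchanged under $\otimes\,\cO_C(\pm p)$. Hence it suffices to show $\Phi_\cE(\cO_C(mp)) \in \cA$ for $m \gg 0$: then $Z$ carries this nonzero object of $\cA$ into the closed upper half plane, so $\Im Z(\Phi_\cE(\cO_C)) = \Im Z(\Phi_\cE(\cO_C(mp))) \geq 0$. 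Conceptually the statement is that $\Phi_\cE$ sends $\Coh(C)$ into $\mathcal{P}([0,1])$ --- the upper bound $1$ being the common fibre phase, the lower bound $0$ the shift by the relative dimension $1$ of $p_X$ --- and any object whose Harder--Narasimhan factors have phases in $[0,1]$ has central charge in the closed upper half plane. Concretely, one passes to the standard cohomology sheaves $\cH^i(\cE)$ on $C \times X$; relative Serre vanishing along $p_X$ kills $R^{>0}(p_X)_*$ of each twisted sheaf for $m \gg 0$, so $\Phi_\cE(\cO_C(mp))$ becomes the two-term complex $\big[(p_X)_*\big(\cH^{-1}(\cE)(mp)\big) \to (p_X)_*\big(\cH^0(\cE)(mp)\big)\big]$, of the same standard amplitude as $\cE$. \emph{The main obstacle} is to show that this complex actually lies in the tilted heart $\cA$, equivalently that $(p_X)_*$ of a fibrewise-$\cA$ family respects, for $m \gg 0$, the torsion pair defining $\cA$; this requires a bound uniform along $C$, and Noetherianity of $\cA$ together with the support property of $\sigma$ are the essential inputs.

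\emph{Step 3 (orthogonality).} By Step 2, $D_{\sigma,\cE}\cdot C = 0$ iff $Z(\Phi_\cE(\cO_C(mp))) \in \R_{<0}$ for $m \gg 0$, iff $\Phi_\cE(\cO_C(mp))$ is $\sigma$-semistable of phase $1$. Assume this. For $c \in C$, applying $\Phi_\cE$ to $0 \to \cO_C(mp-c) \to \cO_C(mp) \to \cO_c \to 0$ and using that all three images lie in $\cA$ for $m \gg 0$ gives a short exact sequence $0 \to \Phi_\cE(\cO_C(mp-c)) \to \Phi_\cE(\cO_C(mp)) \to \cE_c \to 0$ in $\cA$; the middle and right terms are $\sigma$-semistable of phase $1$, so $Z$ of the left term is real and negative, hence it too is $\sigma$-semistable of phase $1$, and the sequence lies in the abelian category $\mathcal{P}(1)$. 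Thus $\cE_c$ is a quotient of the \emph{fixed} object $\Phi_\cE(\cO_C(mp))$, so its multiset of Jordan--H\"older factors is a sub-multiset of that of $\Phi_\cE(\cO_C(mp))$ and therefore takes only finitely many values as $c$ ranges over $C$. Since the $S$-equivalence type of the fibre varies constructibly and $C$ is irreducible, one value is attained on a dense open, so two general objects parameterized by $C$ are $S$-equivalent. Conversely, if two general objects on $C$ are $S$-equivalent then the classifying map $C \to M_\sigma({\bf v})$ to the space of $S$-equivalence classes is constant, and since $D_{\sigma,\cE}$ is pulled back from an ample class on $M_\sigma({\bf v})$ we get $D_{\sigma,\cE}\cdot C = 0$; alternatively, running Step 2 in reverse, if the general fibres share an $S$-equivalence class then $\Phi_\cE(\cO_C(mp))$ is forced to have phase $1$, giving the vanishing directly. (A secondary technical point is the constructibility of the $S$-equivalence type in a flat family.)
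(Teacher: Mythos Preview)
The paper does not prove this theorem; it is quoted from Bayer--Macr\`i \cite[Lemma 3.3]{BayerMacri} and used as a black box, so there is no in-paper argument to compare against. Your outline does follow the strategy of the original Bayer--Macr\`i proof (reduce to a smooth curve $C$, normalize so that $Z({\bf v})=-1$, and study $\Phi_{\cE}(\cO_C)$), but two places contain genuine gaps.

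\emph{Step 2.} Your Serre-vanishing computation shows only that $\Phi_{\cE}(\cO_C(mp))$ is a two-term complex of coherent sheaves in the same \emph{standard} cohomological amplitude as $\cE$; this says nothing about membership in the tilted heart $\cA$. A two-term complex of sheaves need not lie in $\cA$, and the torsion pair defining $\cA$ is not visibly preserved by $(p_X)_*$. You correctly flag this as ``the main obstacle,'' but the inputs you name (Noetherianity of $\cA$, support property) are not by themselves a mechanism. What Bayer--Macr\`i actually use is the Abramovich--Polishchuk/Polishchuk construction of a \emph{constant $t$-structure} on $D^b(C\times X)$ induced from $\cA$: the family $\cE$, being fibrewise in $\cA$, lies in the relative heart $\cA_C$, and one proves that $(p_X)_*$ carries $\cA_C$ (after sufficient twist along $C$) into $\cA$, so that $\Phi_\cE$ maps $\Coh(C)$ into $\mathcal P((0,1])$. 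Without this machinery or a substitute, the containment $\Phi_\cE(\cO_C(mp))\in\cA$ is unproved.

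\emph{Step 3, converse direction.} Your first argument is circular: it assumes a projective moduli space $M_\sigma({\bf v})$ on which $D_\sigma$ is ample and from which $D_{\sigma,\cE}$ is pulled back, but the Positivity Lemma is precisely the ingredient Bayer--Macr\`i use to \emph{construct} such moduli spaces. Your alternative (``running Step 2 in reverse'') is not an argument as stated. In the original proof, one first shows that if the generic fibre has a fixed $S$-equivalence class then every fibre does (openness of stability plus a limit argument), then filters $\cE$ over $C\times X$ by flat subfamilies whose fibres are the Jordan--H\"older factors; additivity of $\Im Z$ under this filtration reduces to the case where every fibre is a fixed stable object $S$, and then $[\Phi_\cE(\cO_C)]=\chi(\cO_C)\cdot[S]$ in $K_{\num}(X)$ gives $\Im Z(\Phi_\cE(\cO_C))=0$ directly.
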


Our primary use of the Positivity Lemma is to attempt to construct extremal nef divisors on Hilbert schemes of points.  Thus it is important to recover Hilbert schemes of points as Bridgeland moduli spaces.  Recall that a torsion-free coherent sheaf $E$ is \emph{$(H,D)$-twisted Gieseker semistable} if for every $F\subset E$ we have $$\frac{\chi(F\otimes \OO_X(mH-D))}{\rk(F)}\leq \frac{\chi(E\otimes \OO_X(mH-D))}{\rk(E)}$$  for all $m\gg 0$, where the Euler characteristic is computed formally via Riemann--Roch; see \cite{MatsukiWentworth}.  For any class ${\bf v}\in K_0(X)$, there are projective moduli spaces $M_{H,D}({\bf v})$ of $S$-equivalence classes of $(H,D)$-twisted Gieseker semistable sheaves with class ${\bf v}$.  If ${\bf v} = (1,0,-n)$ is the Chern character of an ideal sheaf of $n$ points then $M_{H,D}({\bf v}) = X^{[n]}$.  Note that if the irregularity of $X$ is nonzero, then it is crucial to fix the determinant.  

Fix an $(H,D)$-slice in the stability manifold, and fix a vector ${\bf v}\in K_{0}(X)$ with positive rank.  If $\beta$ lies to the left of the vertical wall $\beta = \mu_{H,D}({\bf v})$ for ${\bf v}$, then for $\alpha \gg 0$ the moduli space coincides with a twisted Gieseker moduli space.

\begin{proposition}[The large volume limit {\cite{Bri08, Mac14}}]
\label{prop:large_volume_limit}
Fix divisors $(H,D)$ giving a slice in $\Stab(X)$.  Let ${\bf v}\in K_{0}(X)$ be a vector with positive rank, and let $\beta\in \R$ be such that $\mu_{H,D}({\bf v})>\beta$. If $E \in \cA_\beta$ has $\ch(E) = {\bf v}$ then $E$ is $\sigma_{\beta,\alpha}$-semistable for all $\alpha \gg 0$ if and only if $E$ is an $(H,D-\tfrac{1}{2}K_X)$-twisted Gieseker semistable sheaf.

Moreover, in the quadrant of the $(H,D)$-slice left of the vertical wall there is a largest semicircular wall for ${\bf v}$, called the Gieseker wall.  For all $(\beta,\alpha)$ between this wall and the vertical wall, the moduli space $M_{\sigma_{\beta,\alpha}}({\bf v})$ coincides with the moduli space $M_{H,D-K_X/2}({\bf v})$ of $(H,D-\tfrac{1}{2}K_X)$-twisted Gieseker semistable sheaves.
\end{proposition}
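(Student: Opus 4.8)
The plan is to follow the arguments of Bridgeland \cite{Bri08} and Maciocia \cite{Mac14}, where this is established; I sketch the route. The engine throughout is the asymptotic behaviour of $\nu_{\sigma_{\beta,\alpha}}$ as $\alpha\to\infty$ with $\beta$ fixed: reading off the definition of $Z_{\beta,\alpha}$, an object $A\in\cA_\beta$ of rank $0$ has $\nu_{\sigma_{\beta,\alpha}}(A)$ independent of $\alpha$, an object of negative rank has $\nu_{\sigma_{\beta,\alpha}}(A)\to+\infty$, and an object of positive rank has $\nu_{\sigma_{\beta,\alpha}}(A)\to-\infty$. First I would reduce everything to sheaves. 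If $E\in\cA_\beta$ with $\ch(E)={\bf v}$ is $\sigma_{\beta,\alpha}$-semistable for all $\alpha\gg0$, then $E$ admits no $\cA_\beta$-subobject of non-positive rank, since such a subobject would eventually have slope larger than $\nu_{\sigma_{\beta,\alpha}}(E)\to-\infty$. As $H^{-1}(E)[1]$ (if $H^{-1}(E)\neq 0$) and any nonzero torsion subsheaf of $E$ would be subobjects of exactly this kind, we get $H^{-1}(E)=0$ and $E$ torsion-free, hence $E\in\cT_\beta$. Furthermore, once $E$ is a sheaf, the cohomology sequence of any $\cA_\beta$-inclusion $F\into E$ gives $H^{-1}(F)\into H^{-1}(E)=0$, so $F$ is a sheaf, an honest subsheaf of $E$ lying in $\cT_\beta$, with $\mu_{H,D}(F)>\beta$; likewise the quotient is a sheaf. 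Thus for $\alpha\gg0$ the $\sigma_{\beta,\alpha}$-stability of $E$ is tested purely by subsheaves $F\subseteq E$ with $F\in\cT_\beta$.

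Next I would carry out the slope comparison. For positive-rank sheaves $F\subseteq E$ lying in $\cT_\beta$, formula (\ref{slopeFormula}) gives $\nu_{\sigma_{\beta,\alpha}}(F)-\nu_{\sigma_{\beta,\alpha}}(E) = \alpha^2\!\left(\frac{1}{\mu_{H,D}(E)-\beta}-\frac{1}{\mu_{H,D}(F)-\beta}\right)+O(1)$, and since both denominators are positive this is positive for $\alpha\gg0$ exactly when $\mu_{H,D}(F)>\mu_{H,D}(E)$, while if $\mu_{H,D}(F)=\mu_{H,D}(E)$ the leading term cancels and, using $\Delta_{H,D}=\tfrac12\mu_{H,D}^2-\ch_2^{D}/(H^2\ch_0)$, the sign of the difference is that of $\ch_2^{D}(F)/\ch_0(F)-\ch_2^{D}(E)/\ch_0(E)$. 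On the Gieseker side, a short Riemann--Roch computation --- using that $e^{K_X/2}\operatorname{td}(X)$ has vanishing codimension-one component --- shows that the reduced Hilbert polynomial of a positive-rank sheaf $A$ with respect to the twist $D-\tfrac12 K_X$ equals $m^2+2\mu_{H,D}(A)\,m+\tfrac{2}{H^2}\bigl(\ch_2^{D}(A)/\ch_0(A)+\tau\bigr)$ for a constant $\tau$ depending only on $X$. Hence $(H,D-\tfrac12 K_X)$-twisted Gieseker semistability of $E$ is exactly the requirement that no subsheaf has larger $\mu_{H,D}$, and no subsheaf of equal $\mu_{H,D}$ has larger $\ch_2^{D}/\ch_0$. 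Finally, Gieseker semistability forces $\mu_{H,D}$-semistability, and any $\mu_{H,D}$-destabilizing subsheaf of $E$, as well as any subsheaf of slope equal to $\mu_{H,D}(E)$, automatically lies in $\cT_\beta$; so the two lists of conditions pick out the same sheaves $E$, which gives the asserted equivalence.

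For the second statement I would argue as follows. By the Nested Wall Theorem the numerical walls for ${\bf v}$ lying left of the vertical wall $\beta=\mu_{H,D}({\bf v})$ are nested semicircles with centres on the $\beta$-axis, accumulating only at the point (\ref{accumulationPoint}). The support property --- equivalently the Bogomolov inequality $\Delta_{H,D}\ge0$ for the twisted-Gieseker-semistable objects that can arise as Jordan--H\"older factors --- bounds the radii of the actual walls, so finitely many of the numerical walls are actual walls and there is a largest one, the Gieseker wall $W_0$. The open region lying left of the vertical wall and outside $\overline{W_0}$ is connected and meets no wall, so the set of $\sigma_{\beta,\alpha}$-semistable objects of class ${\bf v}$, and (by the standard arguments of \cite{BayerMacri, Mac14}) the moduli space $M_{\sigma_{\beta,\alpha}}({\bf v})$ itself, is constant on it. As this region contains points with $\alpha\gg0$ and $\beta<\mu_{H,D}({\bf v})$, the first part of the proposition identifies this common moduli space with $M_{H,D-K_X/2}({\bf v})$, which is what is claimed between $W_0$ and the vertical wall.

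I expect the genuinely global step --- and the part demanding the most care --- to be the existence of the largest wall, which rests on the support property together with a boundedness argument for the invariants of destabilizing subobjects. The other point that needs attention, though it is purely formal, is keeping track of the $-\tfrac12 K_X$ twist: the central charge $Z_{\beta,\alpha}$ involves $\ch_2^{D}$ with no Todd correction, whereas Gieseker stability is phrased via $\chi$, and the two orderings of sheaves of equal $\mu_{H,D}$ coincide only after shifting the Gieseker twist by $-\tfrac12 K_X$ so that the codimension-one Todd term is cancelled. Everything else is local asymptotic analysis in $\alpha$ together with the elementary comparison of the tilted abelian category $\cA_\beta$ with $\Coh(X)$.
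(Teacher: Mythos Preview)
The paper does not give its own proof of this proposition; it is quoted as a known result from \cite{Bri08,Mac14} and used as a black box. Your sketch follows the standard route from those references and is largely correct, including the Riemann--Roch identification of why the Gieseker twist must be shifted to $D-\tfrac12K_X$ (the point that $e^{K_X/2}\operatorname{td}(X)$ has vanishing codimension-one part is exactly the right observation).

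There is, however, one genuine slip. You assert that once $E$ is a torsion-free sheaf, any $\cA_\beta$-subobject $F\hookrightarrow E$ is ``an honest subsheaf of $E$'' and that the quotient is a sheaf. This is false in general: writing $G=E/F$ in $\cA_\beta$, the cohomology long exact sequence is
\[
0\to H^{-1}(G)\to F\to E\to H^0(G)\to 0,
\]
so the sheaf-theoretic kernel of $F\to E$ is $K:=H^{-1}(G)\in\cF_\beta$, which need not vanish. (This is precisely the phenomenon exploited in Lemma~\ref{lem:higherRank} of the paper.) As written, your asymptotic comparison therefore does not yet yield the implication ``twisted Gieseker semistable $\Rightarrow$ $\sigma_{\beta,\alpha}$-semistable for $\alpha\gg0$'', because a destabilizing $\cA_\beta$-subobject might fail to be a subsheaf. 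The repair is short: if $K\neq0$ then $\rk K>0$ (a nonzero torsion sheaf cannot lie in $\cF_\beta$), and since $\mu_{H,D}(K)\le\beta<\mu_{H,D}(E)\le\mu_{H,D}(F)$ while $\mu_{H,D}(F)$ is a positive convex combination of $\mu_{H,D}(K)$ and $\mu_{H,D}(F')$ for $F':=F/K$, one gets $\mu_{H,D}(F')>\mu_{H,D}(E)$, so $E$ is already $\mu_{H,D}$-unstable. Thus either $E$ fails $\mu$-semistability, or $K=0$ and $F$ really is a subsheaf, in which case your second-order comparison applies. With this correction your argument goes through.
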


We use these results as follows.  Let ${\bf v} = (1,0,-n)\in K_0(X)$ be the vector for the Hilbert scheme $X^{[n]}$, and let $\sigma_{+}$ be a stability condition in the $(H,D)$-slice lying above the Gieseker wall, so that $M_{\sigma_+}({\bf v}) \cong X^{[n]}$.  Let $\cE/(X\times X^{[n]})$ be the universal ideal sheaf, and let $\sigma_0$ be a stability condition on the Gieseker wall.  By the definition of the Gieseker wall, $\cE$ is a family of $\sigma_0$-semistable objects, so there is an induced nef divisor $D_{\sigma_0,\cE}$ on $X^{[n]}$.  Furthermore, curves orthogonal to $D_{\sigma_0,\cE}$ are understood in terms of destabilizing sequences along the wall, so it is possible to test for extremality.

\section{Gieseker walls and the nef cone}
\label{sec-nef}

Fix an ample divisor $H \in \Pic(X)$ with $H^2=d$ and an \emph{antieffective} divisor $D$.  In this section we study the nef divisor arising from the Gieseker wall (i.e., the largest wall where some ideal sheaf is destabilized) in the slice of the stability manifold given by the pair $(H,D)$.  We first compute the Gieseker wall, and then investigate when the corresponding nef divisor is in fact extremal.

\subsection{Bounding higher rank walls}

The main difficulty in computing extremal rays of the nef cone is to show that a destabilizing subobject along the Gieseker wall is a line bundle, and not some higher rank sheaf.  We first prove a lemma which generalizes \cite[Proposition 8.3]{CoskunHuizenga2} from $X=\PP^2$ to an arbitrary surface.  We prove the result in slightly more generality than we will need here as we expect it to be useful in future work.

\begin{lemma}
\label{lem:higherRank}
Let $\sigma_0$ be a stability condition in the $(H,D)$-slice, and suppose
\[0\to F\to E\to G\to 0\]
is an exact sequence of $\sigma_0$-semistable objects of the same $\sigma_0$-slope, where $E$ is an $(H,D)$-twisted Gieseker semistable torsion-free sheaf. If the map $F\to E$ of sheaves is not injective, then the radius $\rho_W$ of the wall $W$ defined by this sequence satisfies
\[\rho_W^2 \leq \frac{(\min\{\rk (F) -1, \rk(E)\})^2}{2 \rk(F)} \Delta_{H,D}(E).\]
\begin{proof}
The proof is similar to the proof in \cite{CoskunHuizenga2} given in the case of $\PP^2$; we present it for completeness. The object $F$ is a torsion-free sheaf by the standard cohomology sequence and the fact that the heart of the t-structure in the slice we are working in consists of objects which only have nonzero cohomology sheaves in degrees $0$ and $-1$.  The exact sequence along $W$ gives an exact sequence of sheaves
\[ 0\to K\to F \to E \to C \to 0 \]
of ranks $k,f,e,c$, respectively.  By assumption, $k,f,e>0$. Let $(s_W,0)$ be the center of $W$.  As $F$ is in the categories $\cT_{\beta}$ whenever $(\beta,\alpha)$ is on $W$, we find $\mu_{H,D}(F)\geq s_W+\rho_W$,  so 
\begin{align*}
df(s_W+\rho_W) &\leq df \mu_{H,D}(F) = \ch_1^D(F)\cdot H = (\ch_1^D(K)+\ch_1^D(E)-\ch_1^D(C))\cdot H \\& = dk\mu_{H,D}(K) + de\mu_{H,D}(E) - \ch_1^D(C)\cdot H.
\end{align*}
Similarly, $K\in \cF_{\beta}$ along $W$, so $\mu_{H,D}(K)\leq s_W-\rho_W$ and
\[df(s_W+\rho_W) \leq dk(s_W-\rho_W) + de\mu_{H,D}(E) - \ch_1^D(C)\cdot H,\]
which gives
\begin{equation}
\label{eq:ineq-first}
d(k+f) \rho_W \leq d(k-f) s_W +de \mu_{H,D}(E) - \ch_1^D(C)\cdot H.
\end{equation}
We now wish to eliminate the term $\ch_1^D(C)\cdot H$ in Inequality (\ref{eq:ineq-first}). If $C$ is either $0$ or torsion, then $\ch_1^D(C)\cdot H\geq 0$ and $-e = k-f$, and we deduce
\begin{equation}
\label{eq:ineq-wall}
(k+f)\rho_W \leq (k-f)(s_W-\mu_{H,D}(E)).
\end{equation}
Suppose instead that $C$ is not torsion.  Since $C$ is a quotient of the semistable sheaf $E$, we have $\mu_{H,D}(C) \geq \mu_{H,D}(E)$, so $\ch_1^D(C)\cdot H  = dc\mu_{H,D}(C) \geq d c \mu_{H,D}(E)$.  As $k-f = c-e$, we find that Inequality (\ref{eq:ineq-wall}) also holds in this case.

Both sides of Inequality (\ref{eq:ineq-wall}) are positive, so squaring both sides gives
\[(k+f)^2 \rho_W^2 \leq (k-f)^2 (s_W - \mu_{H,D}(E))^2.\]
The formula (\ref{rhoFormula}) for $\rho_W^2$ shows this is equivalent to
\[(k+f)^2 \rho_W^2 \leq (k-f)^2 \left(\rho_W^2 + 2\Delta_{H,D}(E)\right),\]
from which we obtain
\[\rho_W^2 \leq \frac{(k-f)^2}{2kf} \Delta_{H,D}(E).\]
Since $k = f - e + c$, we see that $k \geq \max \{1, f-e \}$.  By taking derivatives in $k$, we see that $\frac{(k-f)^2}{2kf}$ is decreasing for $k+f >0$, and so the maximum possible value of the right-hand side must occur when $k = \max\{1,f-e\}$.  The denominator will be at least $2f$ in this case, and the numerator is $\min \{ (f-1)^2,e^2 \}$.  The result follows.
\end{proof}
\end{lemma}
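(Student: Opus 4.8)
The plan is to pass from the destabilizing triangle in $\cA_{\beta}$ to an honest four-term exact sequence of coherent sheaves, and then run a slope-and-discriminant estimate driven by the torsion-pair constraints that must hold along the wall $W$. This is the strategy of \cite[Proposition 8.3]{CoskunHuizenga2}; the content is to check that it survives replacing $\PP^2$ by an arbitrary surface and $\ch$ by $\ch^{D}$.

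First I would pin down the sheaf-theoretic picture. Since objects of $\cA_{\beta}$ have nonzero cohomology sheaves only in degrees $0$ and $-1$ and $E$ is a sheaf, the cohomology long exact sequence of $0\to F\to E\to G\to 0$ forces $\mathcal{H}^{-1}(F)=0$, so $F$ is a sheaf; moreover $F$ is torsion-free, because a torsion subsheaf of $F$ is an $\cA_{\beta}$-subobject of $E$ admitting no nonzero map to the torsion-free sheaf $E$, hence vanishes. The same long exact sequence produces
\[0\to K\to F\to E\to C\to 0\]
with $K=\mathcal{H}^{-1}(G)$ and $C=\mathcal{H}^{0}(G)$. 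The non-injectivity hypothesis says $K\neq 0$, and since $K\subseteq F$ with $F$ torsion-free, the ranks $k=\rk K$, $f=\rk F$, $e=\rk E$ are all positive, with $k\le f$ and $k-f=\rk C-e$ by additivity of rank.

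Next I would extract two slope inequalities. By Bertram's Nested Wall Theorem the sequence destabilizes $E$ along all of $W$, so for every $(\beta,\alpha)\in W$ we have $F\in\cA_{\beta}$ (hence $F\in\cT_{\beta}$, as a sheaf in $\cA_{\beta}$) and $K=\mathcal{H}^{-1}(G)\in\cF_{\beta}$. As $W$ is a semicircle centered at $(s_W,0)$ of radius $\rho_W$, letting $\beta\to s_W+\rho_W$ forces $\mu_{H,D}(F)\ge s_W+\rho_W$, and letting $\beta\to s_W-\rho_W$ forces $\mu_{H,D}(K)\le s_W-\rho_W$. Intersecting the identity $\ch_1^{D}(F)=\ch_1^{D}(K)+\ch_1^{D}(E)-\ch_1^{D}(C)$ with $H$, substituting these two inequalities, and eliminating the cokernel term --- using $H\cdot\ch_1^{D}(C)\ge 0$ when $C$ is zero or torsion, and $\mu_{H,D}(C)\ge\mu_{H,D}(E)$ (from $(H,D)$-twisted Gieseker semistability of $E$, since $C$ is a quotient of $E$) otherwise --- should collapse everything to
\[(k+f)\,\rho_W\ \le\ (k-f)\bigl(s_W-\mu_{H,D}(E)\bigr),\]
with both sides positive.

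The remaining step is formal: square the last inequality, replace $(s_W-\mu_{H,D}(E))^2$ by $\rho_W^2+2\Delta_{H,D}(E)$ via (\ref{rhoFormula}), and solve to obtain $\rho_W^2\le\frac{(k-f)^2}{2kf}\Delta_{H,D}(E)$. Since $k\mapsto\frac{(k-f)^2}{2kf}$ is decreasing on $(0,f]$ and $k$ ranges over integers in $[\max\{1,f-e\},f]$, the largest value occurs at $k=\max\{1,f-e\}$; plugging in, the numerator becomes $\min\{(f-1)^2,e^2\}=(\min\{f-1,e\})^2$ while the denominator is at least $2f$, giving exactly the stated bound. I expect the only genuinely delicate point to be the middle step: justifying the slope inequalities in the limit $\alpha\to 0$ at the endpoints of $W$, and disposing of the three cases for $C$ uniformly; everything after that is bookkeeping.
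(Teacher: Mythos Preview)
Your proposal is correct and follows essentially the same route as the paper's proof: pass to the four-term sheaf sequence, extract the two slope bounds $\mu_{H,D}(F)\ge s_W+\rho_W$ and $\mu_{H,D}(K)\le s_W-\rho_W$ from the torsion-pair constraints along $W$, eliminate the cokernel term by the same torsion/non-torsion casework, square, substitute $(s_W-\mu_{H,D}(E))^2=\rho_W^2+2\Delta_{H,D}(E)$, and optimize in $k$. The only cosmetic difference is that you phrase the slope bounds as limits $\beta\to s_W\pm\rho_W$, whereas the paper simply asserts them from membership in $\cT_\beta$, $\cF_\beta$ for all $\beta$ on $W$; the content is identical.
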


For our present work we will only need the next consequence of Lemma \ref{lem:higherRank} which follows immediately from computing $\Delta_{H,D}(I_Z)$.

\begin{corollary}\label{cor:walls}
With the hypotheses of Lemma \ref{lem:higherRank}, if $E$ is an ideal sheaf $I_Z\in X^{[n]}$ and $F$ has rank at least $2$, then the radius of the corresponding wall satisfies $$\rho_W^2 \leq \frac{2nd+(H\cdot D)^2-d D^2}{8d^2}:=\varrho_{H,D,n}.$$
\end{corollary}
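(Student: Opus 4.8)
The plan is to derive Corollary \ref{cor:walls} as a direct specialization of Lemma \ref{lem:higherRank}, plus a computation of $\Delta_{H,D}(I_Z)$. First I would compute the twisted Chern character of an ideal sheaf $I_Z$ with $Z \in X^{[n]}$: we have $\ch(I_Z) = (1,0,-n)$, so $\ch^D(I_Z) = (1, -D, \tfrac{D^2}{2} - n)$, giving $\mu_{H,D}(I_Z) = -\tfrac{H\cdot D}{H^2\cdot 1} = -\tfrac{H\cdot D}{d}$ and $\ch_2^D(I_Z) = \tfrac{D^2}{2} - n$. Plugging into the definition $\Delta_{H,D} = \tfrac12 \mu_{H,D}^2 - \tfrac{\ch_2^D}{H^2 \ch_0^D}$ yields
\[
\Delta_{H,D}(I_Z) = \frac{(H\cdot D)^2}{2d^2} - \frac{\tfrac{D^2}{2} - n}{d} = \frac{(H\cdot D)^2 - d\,D^2 + 2nd}{2d^2}.
\]

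Next I would invoke Lemma \ref{lem:higherRank} with $E = I_Z$. The lemma's hypotheses require that the destabilizing sequence $0 \to F \to E \to G \to 0$ consist of $\sigma_0$-semistable objects of the same slope with $E$ an $(H,D)$-twisted Gieseker semistable torsion-free sheaf (which $I_Z$ is), and that the sheaf map $F \to E$ fail to be injective. Since $E = I_Z$ has rank $1$, any subobject $F$ of rank $\geq 2$ cannot inject into $E$ as a sheaf, so the non-injectivity hypothesis is automatic in this regime and the lemma applies. With $\rk(E) = 1$ and $\rk(F) = f \geq 2$, the bound from the lemma reads
\[
\rho_W^2 \leq \frac{(\min\{f-1, 1\})^2}{2f}\,\Delta_{H,D}(E) = \frac{1}{2f}\,\Delta_{H,D}(I_Z) \leq \frac{1}{4}\,\Delta_{H,D}(I_Z),
\]
where the last inequality uses $f \geq 2$ and the fact that $\tfrac{1}{2f}$ is maximized at $f = 2$. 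Substituting the formula for $\Delta_{H,D}(I_Z)$ gives $\rho_W^2 \leq \tfrac{2nd + (H\cdot D)^2 - dD^2}{8d^2} = \varrho_{H,D,n}$, as claimed.

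I do not expect any genuine obstacle here: the corollary is entirely formal once Lemma \ref{lem:higherRank} is in hand. The only points requiring minor care are (i) correctly tracking the twist in $\ch^D$ and signs in the discriminant formula, and (ii) observing that $\min\{\rk(F)-1,\rk(E)\} = \min\{f-1,1\} = 1$ for all $f \geq 2$, so that the rank-$1$ target forces the cleanest possible form of the bound and the worst case $f=2$ gives the stated constant. One should also note that $\varrho_{H,D,n} \geq 0$ automatically, since $\Delta_{H,D}(I_Z) \geq 0$ by the Bogomolov inequality (ideal sheaves being Gieseker semistable), so the bound is never vacuous in the sense of excluding the existence of walls outright.
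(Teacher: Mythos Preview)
Your proposal is correct and follows exactly the approach the paper indicates: compute $\Delta_{H,D}(I_Z)$ explicitly and plug into Lemma \ref{lem:higherRank}, noting that $\rk(F)\geq 2 > \rk(E)=1$ forces non-injectivity and $\min\{f-1,1\}=1$ so the worst case $f=2$ gives the constant $\tfrac14$. The paper's own proof is a single sentence to this effect, and you have simply spelled out the details.
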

The number $\varrho_{H,D,n}$ therefore bounds the squares of the radii of higher rank walls for $X^{[n]}$.

\subsection{Rank one walls and critical divisors}  In the cases where we compute the Gieseker wall, the ideal sheaf that is destabilized along the wall will be destabilized by a rank $1$ subobject.  We first compute the numerical walls given by rank $1$ subobjects.

\begin{lemma}\label{lem:rk1Center}
Consider a rank $1$ torsion-free sheaf $F = I_{Z'}(-L)$, where $Z'$ is a zero-dimensional scheme of length $w$ and $L$ is an effective divisor.  In the $(H,D)$-slice, the numerical wall $W$ for $X^{[n]}$ where $F$ has the same slope as an ideal $I_Z$ of $n$ points has center $(s_W,0)$ given by $$s_W = - \frac{2(n-w)+L^2+2(D\cdot L)}{2(H\cdot L)}.$$
\end{lemma}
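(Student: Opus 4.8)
The statement is a pure slope computation, so the plan is to unwind the definition of a numerical wall as "equality of $\sigma_{\beta,\alpha}$-slopes" and solve for the center. First I would record the twisted Chern characters of the two classes. For $I_Z$ with $\ch(I_Z) = (1,0,-n)$ we have $\ch_0 = 1$, $\ch_1^D = -D$, $\ch_2^D = -n + \tfrac{D^2}{2}$, hence $\mu_{H,D}(I_Z) = -\tfrac{H\cdot D}{H^2}$ and $\Delta_{H,D}(I_Z) = \tfrac{1}{2}\mu_{H,D}(I_Z)^2 - \tfrac{\ch_2^D}{H^2} = \tfrac{(H\cdot D)^2}{2(H^2)^2} + \tfrac{n}{H^2} - \tfrac{D^2}{2H^2}$ (this is exactly the $\Delta$ underlying the $\varrho$ in Corollary~\ref{cor:walls}). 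For $F = I_{Z'}(-L)$ we have $\ch(F) = (1, -L, \tfrac{L^2}{2} - w)$, so $\ch_1^D(F) = -L - D$, $\ch_2^D(F) = \tfrac{L^2}{2} - w - D\cdot(-L) + \tfrac{D^2}{2} = \tfrac{L^2}{2} - w + D\cdot L + \tfrac{D^2}{2}$, giving $\mu_{H,D}(F) = -\tfrac{H\cdot(L+D)}{H^2}$ and $\Delta_{H,D}(F) = \tfrac{1}{2}\mu_{H,D}(F)^2 - \tfrac{1}{H^2}\bigl(\tfrac{L^2}{2} - w + D\cdot L + \tfrac{D^2}{2}\bigr)$.

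Next I would feed these into the center formula~(\ref{centerFormula}),
\[
s_W = \frac{1}{2}\bigl(\mu_{H,D}(I_Z) + \mu_{H,D}(F)\bigr) - \frac{\Delta_{H,D}(I_Z) - \Delta_{H,D}(F)}{\mu_{H,D}(I_Z) - \mu_{H,D}(F)}.
\]
Since $L$ is effective and $H$ ample, $H\cdot L > 0$, so $\mu_{H,D}(I_Z) - \mu_{H,D}(F) = \tfrac{H\cdot L}{H^2} \neq 0$ and we are genuinely in the semicircular-wall case (not the vertical wall), so the formula applies. The denominator in the correction term is $\tfrac{H\cdot L}{H^2}$. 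For the numerator, the $\tfrac{1}{2}\mu^2$ pieces of the two discriminants partially cancel against the average-of-slopes term: writing $\mu := \mu_{H,D}(I_Z)$ and $\mu' := \mu_{H,D}(F)$, one has $\tfrac{1}{2}(\mu+\mu') - \tfrac{\frac12\mu^2 - \frac12\mu'^2}{\mu - \mu'} = \tfrac{1}{2}(\mu+\mu') - \tfrac{1}{2}(\mu+\mu') = 0$, so $s_W$ reduces to $\tfrac{1}{\mu-\mu'}\cdot\tfrac{1}{H^2}\bigl(\ch_2^D(I_Z) - \ch_2^D(F)\bigr)$ after the dust settles — concretely $s_W = \tfrac{H^2}{H\cdot L}\cdot\tfrac{1}{H^2}\bigl((-n + \tfrac{D^2}{2}) - (\tfrac{L^2}{2} - w + D\cdot L + \tfrac{D^2}{2})\bigr) = \tfrac{-n - \tfrac{L^2}{2} + w - D\cdot L}{H\cdot L} = -\tfrac{2(n-w) + L^2 + 2(D\cdot L)}{2(H\cdot L)}$, which is the claimed expression.

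**Main obstacle.** There is no real obstacle here — the content is bookkeeping with twisted Chern characters and the algebraic identity that makes the $\tfrac12\mu^2$ terms telescope. The one point deserving a word of care is confirming we are in the semicircular case of the trichotomy following~(\ref{rhoFormula}): this needs $\mu_{H,D}(I_Z) \neq \mu_{H,D}(F)$, equivalently $H\cdot L \neq 0$, which holds because $H$ is ample and $L$ is effective (if $L = 0$ the sheaf $F$ is an ideal sheaf of the same slope and one would instead be on the vertical wall, but then $F = I_{Z'}$ with $w$ points is excluded since $L$ is assumed effective and the two classes would need $w = n$, giving no wall). Since the problem only asks for the center, I would not compute $\rho_W$; the radius follows from~(\ref{rhoFormula}) once $s_W$ is known, and the subsequent sections use $s_W$ directly.
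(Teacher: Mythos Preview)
Your proof is correct and takes the same approach as the paper, which simply says the result is an immediate consequence of Equation~(\ref{centerFormula}); you have just written out the bookkeeping explicitly, including the nice observation that the $\tfrac{1}{2}\mu^2$ contributions telescope against the average-of-slopes term so that $s_W$ reduces to a ratio of $\ch_2^D$'s.
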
  
\begin{proof}
This is an immediate consequence of Equation (\ref{centerFormula}) for the center of a wall.
\end{proof}

Recalling that walls for $X^{[n]}$ left of the vertical wall get larger as their centers decrease, we deduce the following consequence.

\begin{lemma}
If the Gieseker wall in the $(H,D)$-slice is given by a rank $1$ subobject, then it is a line bundle $\OO_X(-L)$ for some effective divisor $L$.  
\end{lemma}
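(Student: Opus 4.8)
The plan is to show that among all rank $1$ subobjects $F = I_{Z'}(-L)$ destabilizing some ideal sheaf $I_Z \in X^{[n]}$ along a numerical wall, the one giving the Gieseker wall (the largest such wall) must have $Z' = \emptyset$, i.e. $F = \OO_X(-L)$, and that $L$ is effective. By Lemma \ref{lem:rk1Center}, the center of the wall associated to $F = I_{Z'}(-L)$ is
\[
s_W = -\frac{2(n-w) + L^2 + 2(D\cdot L)}{2(H\cdot L)},
\]
where $w = \operatorname{length}(Z') \geq 0$. Since walls left of the vertical wall get larger as their centers $s_W$ decrease, and since $I_{Z'}(-L) \hookrightarrow I_Z$ forces $H \cdot L > 0$ (as $L$ must be an effective, hence nonzero, divisor for such a map to exist — this should be recorded), increasing $w$ strictly increases $s_W$ and hence shrinks the wall. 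So for fixed $L$, the largest wall coming from a rank $1$ subobject with first Chern class $-L$ is obtained by taking $w = 0$, i.e. by the honest line bundle $\OO_X(-L)$.

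First I would justify that any rank $1$ subobject $F \hookrightarrow I_Z$ of the stated shape does have $L$ effective: a rank $1$ torsion-free sheaf is of the form $I_{Z'}(-L)$ for a unique divisor class $L$ and zero-dimensional $Z'$, and a nonzero map $I_{Z'}(-L) \to I_Z \subset \OO_X$ yields a nonzero section of $\OO_X(L)$ (twisting $I_{Z'}(-L) \hookrightarrow \OO_X$ by $\OO_X(L)$ gives $I_{Z'} \hookrightarrow \OO_X(L)$, whose composite with $\OO_X \hookleftarrow I_{Z'}$... more directly, $\hom(I_{Z'}(-L), \OO_X) \neq 0$ implies $\hom(\OO_X(-L),\OO_X) = H^0(\OO_X(L)) \neq 0$ since $I_{Z'}(-L) \hookrightarrow \OO_X(-L)$ at the level of the double dual away from a finite set). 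Hence $L$ is effective; in particular $H \cdot L \geq 0$, and $H \cdot L = 0$ would force $L$ numerically trivial, making $s_W$ the center of a vertical (not semicircular) wall, contradicting that $W$ is a genuine bounding wall. So $H \cdot L > 0$ and the monotonicity argument above applies.

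Then I would assemble the conclusion: given that the Gieseker wall is realized by \emph{some} rank $1$ subobject, that subobject $F = I_{Z'}(-L)$ sits on a wall $W$, and by the previous paragraph replacing $Z'$ by $\emptyset$ gives a rank $1$ subobject $\OO_X(-L) \hookrightarrow I_{Z'}(-L) \hookrightarrow I_Z$ whose associated numerical wall $W'$ has center $s_{W'} \leq s_W$, with equality iff $w = 0$. If $w > 0$ then $W'$ is strictly larger than $W$; but $\OO_X(-L) \hookrightarrow I_Z$ does destabilize $I_Z$ somewhere on $W'$ (one must check that this subobject actually produces stable/semistable pieces of equal slope on $W'$ — this is where the argument is most delicate, since being a \emph{numerical} wall does not automatically make it an \emph{actual} wall). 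I expect this last point — upgrading the numerical wall for $\OO_X(-L)$ to an actual wall, or more precisely arguing that the Gieseker wall, being the largest actual wall, cannot be strictly smaller than $W'$ — to be the main obstacle; the cleanest route is probably to observe that if $I_{Z'}(-L) \hookrightarrow I_Z$ realizes an actual destabilizing sequence then composing with $\OO_X(-L) \hookrightarrow I_{Z'}(-L)$ and taking the saturation, or directly checking that $\OO_X(-L) \hookrightarrow I_Z$ has torsion-free quotient $I_{Z''}(L')$-type cokernel, shows $\OO_X(-L)$ also destabilizes $I_Z$ on a wall at least as large as $W'$, contradicting maximality of the Gieseker wall unless $w = 0$. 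Therefore the destabilizing subobject along the Gieseker wall is the line bundle $\OO_X(-L)$ with $L$ effective, as claimed.
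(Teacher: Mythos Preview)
Your overall strategy matches the paper's: use Lemma~\ref{lem:rk1Center} to see that the numerical wall $W'$ given by $\OO_X(-L)$ is strictly larger than the wall $W$ given by $I_{Z'}(-L)$ when $w>0$, and then argue that $W'$ is an actual wall, contradicting maximality of $W$. Your justification that $L$ is effective is also correct (and more explicit than the paper, which simply takes this as part of the setup).

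However, there is a genuine error in the step you yourself flag as delicate. You write ``replacing $Z'$ by $\emptyset$ gives a rank $1$ subobject $\OO_X(-L)\hookrightarrow I_{Z'}(-L)\hookrightarrow I_Z$'' and later ``composing with $\OO_X(-L)\hookrightarrow I_{Z'}(-L)$.'' But the inclusion goes the other way: $I_{Z'}(-L)\hookrightarrow \OO_X(-L)$, not $\OO_X(-L)\hookrightarrow I_{Z'}(-L)$. So there is no composite map $\OO_X(-L)\to I_Z$ for the \emph{given} $I_Z$, and your proposed route to upgrading $W'$ to an actual wall breaks down. (Concretely, the reflexive hull of $I_{Z'}(-L)\hookrightarrow\OO_X$ gives $\OO_X(-L)\cong I_C\hookrightarrow\OO_X$ for some $C\in|L|$, but there is no reason for $Z$ to lie on $C$, so $I_C\not\subset I_Z$ in general.)

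The paper sidesteps this by abandoning the original $I_Z$ entirely. Since $L$ is effective, choose any curve $C\in|L|$ and any length-$n$ scheme $Z$ lying on $C$; then
\[
0\to \OO_X(-C)\to I_Z\to I_{Z\subset C}\to 0
\]
is exact in $\Coh(X)$. To see that this is a destabilizing sequence somewhere on $W'$, one only needs $\OO_X(-L)\in\cA_\beta$ for some $(\beta,\alpha)\in W'$; but $\OO_X(-L)$ has the same $\mu_{H,D}$-slope as $I_{Z'}(-L)$, and the latter lies in the categories along $W\subset$ interior of $W'$, so $\OO_X(-L)$ lies in at least some of the categories along $W'$. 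Hence $W'$ is an actual wall strictly larger than the Gieseker wall, a contradiction.
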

\begin{proof}
Suppose some $I_Z\in X^{[n]}$ is destabilized along the Gieseker wall $W$ by a sheaf of the form $I_{Z'}(-L)$ where $Z'$ is a nonempty zero-dimensional scheme and $L$ is effective.  By Lemma \ref{lem:rk1Center}, the numerical wall $W'$ given by $\OO_X(-L)$ is strictly larger than $W$.  Since $\OO_X(-L)$ has the same $\mu_{H,D}$-slope as $I_{Z'}(-L)$ and $I_{Z'}(-L)$ is in the categories along $W$, we find that $\OO_X(-L)$ is in at least some of the categories along $W'$.  But then $W'$ is an actual wall, since any ideal sheaf $I_Z$ where $Z$ lies on a curve $C\in |L|$ is destabilized along it.  This contradicts that $W$ is the Gieseker wall.
\end{proof}

Less trivially, there is a further minimality condition automatically satisfied by a line bundle $\OO_X(-L)$ which gives the Gieseker wall.  We define the set of \textit{critical effective divisors} with respect to $H$ and $D$ by
\[\CrDiv(H,D) = \{-D\} \cup \{ L \in \Pic(X) \text{ effective} : H \cdot L < H \cdot (-D) \}. \] By \cite[Ex. V.1.11]{Hartshorne}, the set $\CrDiv(H,D)/{\sim}$ of critical divisors modulo numerical equivalence is finite.  Therefore the set of numerical walls for $X^{[n]}$ given by line bundles $\OO_X(-L)$ with $L\in \CrDiv(H,D)$ is also finite.  Note that the inequality $H\cdot L< H\cdot (-D)$ is equivalent to the inequality $\mu_{H,D}(\OO_X(-L))>0$.  The next proposition demonstrates the importance of critical divisors.

\begin{proposition}\label{prop:critDiv}
Assume $2n > D^2$, and suppose the subobject giving the Gieseker wall for $X^{[n]}$ in the $(H,D)$-slice is a line bundle.  Then the Gieseker wall is computed by $\OO_X(-L)$, where $L\in \CrDiv(H,D)$ is chosen so that the numerical wall given by $\OO_X(-L)$ is as large as possible.
\end{proposition}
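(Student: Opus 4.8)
The plan is to argue by contradiction. Suppose the Gieseker wall $W$ is computed by some line bundle $\OO_X(-L)$ with $L$ effective, but $L\notin\CrDiv(H,D)$; we want to produce a strictly larger actual wall, contradicting that $W$ is the Gieseker wall. By Lemma~\ref{lem:rk1Center} (with $w=0$), the numerical wall given by $\OO_X(-L)$ has center
\[
s_W = -\frac{2n+L^2+2(D\cdot L)}{2(H\cdot L)},
\]
and by the Nested Wall Theorem the radius of a semicircular wall left of the vertical wall increases as the center $s_W$ decreases. So it suffices to show: whenever $L$ is effective with $H\cdot L\ge H\cdot(-D)$ (i.e.\ $L\notin\CrDiv(H,D)$ and $L\ne -D$), the divisor $-D\in\CrDiv(H,D)$ yields a numerical wall with smaller (or equal) center, and that this wall is actually an honest wall.

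The main computation is therefore to compare $s_W(L)$ with $s_W(-D)$. For $-D$ the center is $s_W(-D) = -\dfrac{2n+D^2-2D^2}{2(-H\cdot D)} = -\dfrac{2n-D^2}{-2(H\cdot D)} = \dfrac{2n-D^2}{2(H\cdot D)}$; note $H\cdot D<0$ so this is negative, using the hypothesis $2n>D^2$. First I would reduce the general inequality $s_W(-D)\le s_W(L)$ to a polynomial inequality in the intersection numbers. The key input beyond the hypothesis $2n>D^2$ should be the Hodge index theorem: since $H$ is ample, on the rank-two lattice spanned by $H$ and any class $L$ one has $(H\cdot L)^2 \ge H^2\, L^2$ when $L^2\ge 0$, and more usefully the Hodge index inequality controls the cross terms that appear. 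I expect that after clearing denominators (both $H\cdot L$ and $-H\cdot D$ are positive), the desired inequality follows from combining $2n>D^2$ with $H\cdot L \ge -H\cdot D$ and a Hodge-index bound relating $D^2$, $L^2$, $D\cdot L$, $H\cdot D$, $H\cdot L$. This is the step where I anticipate the real work; it is a finite but slightly delicate inequality chase, and getting the direction right (and handling the boundary case $H\cdot L = -H\cdot D$, where one must break ties using the $\ch_2$ terms or simply observe $L$ and $-D$ then give comparable walls) will require care.

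Having shown that the numerical wall $W_{-D}$ given by $\OO_X(-D)$ has center no larger than that of $W_L$, hence radius at least as large, I would then confirm $W_{-D}$ is an \emph{actual} wall: take any $Z$ of length $n$ with $Z$ contained in a curve $C\in|{-D}|$ (such curves exist since $-D$ is effective, and by the assumptions on $n$ one can place $n$ points on such a curve), giving an injection $\OO_X(D)\hookrightarrow I_Z$, i.e.\ a destabilizing sequence $0\to\OO_X(D)\to I_Z\to \OO_C(?)\to 0$ along $W_{-D}$; one checks as in the proof of the preceding Lemma that $\OO_X(D)$ lies in the relevant categories $\cT_\beta$ along the wall (its $\mu_{H,D}$-slope is positive by $H\cdot(-D)>0$, placing it appropriately relative to the vertical wall for ${\bf v}$), so $W_{-D}$ is a genuine wall. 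Since $W_{-D}$ is at least as large as $W_L = W$, either $W_{-D}=W$ (so $W$ is already computed by a critical divisor, namely $-D$), or $W_{-D}$ is strictly larger, contradicting maximality of the Gieseker wall. In either case the Gieseker wall is computed by a line bundle $\OO_X(-L)$ with $L\in\CrDiv(H,D)$, and among such it must be the one giving the largest numerical wall, since every one of these numerical walls is an actual wall by the same curve-containment argument (using that $n$ is large enough to place $n$ points on a curve of class $L$, which is where a hypothesis like $n\ge L^2$ or $n\ge p_a(L)+1$ enters, though for the bare statement only the existence of \emph{some} critical divisor $-D$ realizing a wall is strictly needed). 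This completes the argument.
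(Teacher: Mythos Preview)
Your central step---proving the purely numerical inequality $s_W(-D)\le s_W(L)$ for every effective $L$ with $H\cdot L\ge H\cdot(-D)$---is false, so the Hodge-index chase you anticipate cannot succeed. Take $\Pic(X)=\Z H$ with $H$ effective, $D=-H$, and $L=mH$. Then
\[
s_W(mH)=-\frac{2n+m^2H^2-2mH^2}{2mH^2}\sim-\frac{m}{2}\longrightarrow-\infty\quad(m\to\infty),
\]
so for $m\gg0$ the numerical wall $W_{mH}$ has center far more negative than $s_W(-D)=-\tfrac{2n-H^2}{2H^2}$, even though $mH\notin\CrDiv(H,D)$. The point is that such $W_{mH}$ are never \emph{actual} walls: $\OO_X(-mH)$ has $\mu_{H,D}$ so negative that it fails to lie in the categories $\cT_\beta$ along $W_{mH}$. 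Your argument tries to compare centers numerically and never invokes this categorical constraint, which is why it cannot work.

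The paper's proof rests precisely on that constraint. The key observation is that $\mu_{H,D}(\OO_X(D))=0$ and $\Delta_{H,D}(\OO_X(D))=0$, so by the radius formula the wall $W_{-D}$ satisfies $\rho_{W_{-D}}=|s_{W_{-D}}|$ and hence passes through the origin in the $(\beta,\alpha)$-plane. If the Gieseker wall $W'$ is strictly larger than $W_{-D}$ and is given by a line bundle $\OO_X(-L)$, then since $W_{-D}$ is nested inside $W'$, the wall $W'$ extends into the region $\beta\ge 0$. But $\OO_X(-L)$ must lie in $\cT_\beta$ along all of $W'$, forcing $\mu_{H,D}(\OO_X(-L))>0$; this is exactly the inequality $H\cdot L<H\cdot(-D)$ defining membership in $\CrDiv(H,D)$. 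The converse---that the maximal numerical wall among critical divisors is an actual wall---uses the same geometry: any critical $L$ has $\mu_{H,D}(\OO_X(-L))\ge 0$, so $\OO_X(-L)$ lies in the categories near $\beta=0$ and hence somewhere along its wall (which contains $W_{-D}$). Incidentally, your assertion that $\mu_{H,D}(\OO_X(D))>0$ is off; it equals $0$, and this is precisely what makes $W_{-D}$ pass through the origin and drives the whole argument.
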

\begin{proof}
First, consider the numerical wall $W$ given by $\OO_X(D)$.  By Lemma \ref{lem:rk1Center}, the center $(s_W,0)$ has \begin{equation}\label{Dcenter} s_W = \frac{2n-D^2}{2 (H\cdot D)} < 0\end{equation} since $2n > D^2$ and $D$ is antieffective.  Since $\mu_{H,D}(\OO_X(D)) = \Delta(\OO_X(D)) = 0$, Formula (\ref{rhoFormula}) for the radius of $W$ gives $\rho_W^2 = s_W^2$.  In particular, $W$ is nonempty, and $\OO_X(D)$ lies in at least some of the categories along $W$.  Since $D$ is antieffective, there are exact sequences of the form $$0\to \OO_X(D)\to I_Z \to I_{Z\subset C}\to 0$$ where $C\in |{-D}|$ and $Z\subset C$ is a collection of $n$ points.  If no actual wall is larger than $W$, it follows that $W$ is an actual wall and it is the Gieseker wall.

Suppose the Gieseker wall is larger than $W$ and computed by a line bundle $\OO_X(-L)$ with $L$ effective.  Since $W$ passes through the origin in the $(\beta,\alpha)$-plane, $\OO_X(-L)$ must lie in the category $\cT_0$.  Therefore $\mu_{H,D}(\OO_X(-L)) > 0$, and $L\in \CrDiv(H,D)$.

Conversely, suppose $L\in \CrDiv(H,D)$ is chosen to maximize the wall $W'$ given by $\OO_X(-L)$.  Then no actual wall is larger than $W'$.  Since $s_W <0$ and $\mu_{H,D}(\OO_X(-L)) \geq 0$, we find that $\OO_X(-L)$ is in at least some of the categories along $W$, and hence in at least some of the categories along $W'$.  We conclude that $W'$ is an actual wall, and therefore that it is the Gieseker wall.
\end{proof}

Combining Corollary \ref{cor:walls} and Proposition \ref{prop:critDiv} gives our primary tool to compute the Gieseker wall.

\begin{theorem}\label{thm:Gieseker}
Assume $2n > D^2$, and let $L\in \CrDiv(H,D)$ be a critical divisor such that the wall for $X^{[n]}$ given by $\OO_X(-L)$ is as large as possible.  If this wall has radius $\rho$ satisfying $\rho^2 \geq \varrho_{H,D,n}$, then it is the Gieseker wall.

Conversely, if the Gieseker wall has radius satisfying $\rho^2 \geq \varrho_{H,D,n}$  then it is obtained in this way.
\end{theorem}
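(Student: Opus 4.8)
\subsection*{Proof strategy for Theorem \ref{thm:Gieseker}}

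The plan is to reduce the statement to Corollary \ref{cor:walls} and Proposition \ref{prop:critDiv} by analyzing the rank of a destabilizing subobject along the Gieseker wall. Write $W'$ for the numerical wall of $X^{[n]}$ associated to $\OO_X(-L)$, where $L\in\CrDiv(H,D)$ is chosen to make this wall as large as possible, let $\rho$ be its radius, and let $W_G$ denote the actual Gieseker wall, which by Proposition \ref{prop:large_volume_limit} is a semicircular wall to the left of the vertical wall. The proof of Proposition \ref{prop:critDiv} already shows that $W'$ is an \emph{actual} wall (using $2n>D^2$ and antieffectivity of $D$, via the destabilizing sequences $0\to\OO_X(-L)\to I_Z\to I_{Z\subset C}\to 0$ for $Z$ lying on a curve $C\in|L|$), so $W_G$ is at least as large as $W'$. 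Both assertions of the theorem then follow once I show that the Gieseker wall is given by a line bundle, since Proposition \ref{prop:critDiv} identifies such a wall with $W'$ outright.

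So I would take a destabilizing sequence $0\to F\to I_Z\to G\to 0$ of $\sigma_0$-semistable objects of equal slope along $W_G$ and analyze $\rk F$. Because $\mathcal{H}^{-1}(F)\hookrightarrow\mathcal{H}^{-1}(I_Z)=0$, the object $F$ is a sheaf, and it cannot be torsion: a torsion sheaf maps to zero in the torsion-free sheaf $I_Z$, which would force $F\cong\mathcal{H}^{-1}(G)\in\cF_\beta$, contradicting $F$ being torsion since objects of $\cF_\beta$ are torsion-free. Hence $\rk F\geq 1$. If $\rk F\geq 2$ then $F\to I_Z$ is not injective as a map of sheaves (a subsheaf of $I_Z$ has rank at most $1$), so Corollary \ref{cor:walls} applies and $\rho_{W_G}^2\leq\varrho_{H,D,n}$; in the forward direction this, the hypothesis $\rho^2\geq\varrho_{H,D,n}$, and $\rho_{W_G}^2\geq\rho^2$ combine to force $\rho_{W_G}=\rho$ and hence $W_G=W'$, which is the wall of the line bundle $\OO_X(-L)$. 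If $\rk F=1$, then $F$ is a rank-one torsion-free sheaf admitting a nonzero, hence injective, map to $I_Z\hookrightarrow\OO_X$, so $F\cong\cI_{Z'}(-L')$ with $L'$ effective and $Z'$ zero-dimensional; if $Z'$ were nonempty, Lemma \ref{lem:rk1Center} would make the wall of $\OO_X(-L')$ strictly larger than $W_G$ while still an actual wall, contradicting maximality of $W_G$, so $F=\OO_X(-L')$ is a line bundle. Thus the Gieseker wall is given by a line bundle and Proposition \ref{prop:critDiv} gives $W_G=W'$. The converse is the same analysis: the rank-one case again produces a line-bundle destabilizer and hence $W_G=W'$, while in the rank $\geq 2$ case Corollary \ref{cor:walls} together with the hypothesis $\rho_{W_G}^2\geq\varrho_{H,D,n}$ forces $\rho_{W_G}^2=\varrho_{H,D,n}$, which should again pin down $W_G=W'$.

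I expect the boundary case $\rho_{W_G}^2=\varrho_{H,D,n}$ in the rank $\geq 2$ part of the converse to be the only delicate point, since there Corollary \ref{cor:walls} gives merely a non-strict bound and so does not by itself produce a line-bundle destabilizer. To handle it I would unwind the equality case of Lemma \ref{lem:higherRank}: equality forces $\rk F=2$, the kernel sheaf $K$ of $F\to I_Z$ to have rank $1$ with $\mu_{H,D}(K)$ at the left endpoint of $W_G$, and the cokernel $C$ to be supported in dimension zero, so that $F$ is an extension $0\to K\to F\to I_{Z''}\to 0$ with $I_{Z''}$ the image of $F\to I_Z$; comparing discriminants then shows $I_{Z''}$ and $I_Z$ can share a $\sigma_0$-slope along the semicircular wall $W_G$ only if $Z''=Z$, and the residual configuration is rigid enough that I expect it either cannot occur or still forces $W_G=W'$ (and for the applications one may simply assume $\rho^2>\varrho_{H,D,n}$ strictly). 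Everything else is bookkeeping: Corollary \ref{cor:walls} controls all higher-rank walls, and Lemma \ref{lem:rk1Center} together with Proposition \ref{prop:critDiv} controls the rank-one walls and hence the Gieseker wall.
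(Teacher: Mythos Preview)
Your approach is correct and matches the paper's: the paper gives no standalone proof of Theorem~\ref{thm:Gieseker} but simply presents it as the combination of Corollary~\ref{cor:walls} and Proposition~\ref{prop:critDiv}, which is precisely what you carry out in detail. Your flag on the boundary case $\rho_{W_G}^2=\varrho_{H,D,n}$ in the converse is well taken---the paper does not treat it either, and all subsequent applications (notably Corollary~\ref{cor:Dbest}) invoke only the forward direction or work under strict inequality.
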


While the theorem is our sharpest result, it is useful to lose some generality to get a more explicit version.  Since $-D\in \CrDiv(H,D)$, if the wall given by $\OO_X(D)$ satisfies $\rho^2 \geq \varrho_{H,D,n}$ then the Gieseker wall is computed by Theorem \ref{thm:Gieseker}.  This allows us to compute the Gieseker wall so long as $n$ is large enough, depending only on the intersection numbers of $H$ and $D$.

\begin{corollary}
\label{cor:Dbest}
Let $$\eta_{H,D}: = \frac{(H\cdot D)^2+d D^2}{2d}.$$ If $n\geq \eta_{H,D}$ then the Gieseker wall is the largest wall given by a critical divisor.

Furthermore, if $n>\eta_{H,D}$ then every $I_Z$ destabilized along the Gieseker wall fits into an exact sequence $$0 \to \OO_X(-C)\to I_Z\to I_{Z\subset C}\to 0$$ for some curve $C\in |L|$, where $L$ is a critical divisor computing the Gieseker wall.  If the critical divisor computing the Gieseker wall is unique, then $\OO_X(-C)$ and $I_{Z\subset C}$ are the Jordan--H\"older factors of any $I_Z$ destabilized along the Gieseker wall.
\end{corollary}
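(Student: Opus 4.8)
The plan is to deduce this from the machinery already assembled: Theorem~\ref{thm:Gieseker}, Corollary~\ref{cor:walls}, the lemma preceding Proposition~\ref{prop:critDiv}, and Proposition~\ref{prop:critDiv}. I would assume throughout that $D\neq 0$, so $H\cdot D<0$ (if $D=0$ then $\CrDiv(H,D)=\{0\}$ and the statement degenerates). For the first assertion: since $-D\in\CrDiv(H,D)$, the largest wall $W_{\max}$ given by a critical divisor is at least as large as the wall $W_0$ cut out by $\OO_X(D)$. Because $\mu_{H,D}(\OO_X(D))=\Delta_{H,D}(\OO_X(D))=0$, Formula~(\ref{rhoFormula}) gives $\rho_{W_0}^2=s_{W_0}^2$, and Lemma~\ref{lem:rk1Center} gives $s_{W_0}=(2n-D^2)/(2(H\cdot D))$, so $\rho_{W_0}^2=(2n-D^2)^2/(4(H\cdot D)^2)$. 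Setting $y=d(2n-D^2)$, the inequality $\rho_{W_0}^2\ge\varrho_{H,D,n}$ rearranges, after clearing denominators, to $(2y+(H\cdot D)^2)(y-(H\cdot D)^2)\ge 0$; and $n\ge\eta_{H,D}$ says precisely that $y\ge(H\cdot D)^2$ (hence also $y>0$), so both factors are non-negative. Therefore $\rho_{W_{\max}}^2\ge\rho_{W_0}^2\ge\varrho_{H,D,n}$, and Theorem~\ref{thm:Gieseker} identifies $W_{\max}$ with the Gieseker wall. (The bound $n\ge\eta_{H,D}$ also gives $2n>D^2$, which Theorem~\ref{thm:Gieseker} requires.)

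For the second assertion I would take $n>\eta_{H,D}$, so the above inequality is strict and, writing $W$ for the Gieseker wall, $\rho_W^2\ge\rho_{W_0}^2>\varrho_{H,D,n}$. Let $0\to F\to I_Z\to G\to 0$ be a destabilizing sequence along $W$ in $\cA_\beta$ for some $(\beta,\alpha)\in W$. By the cohomology argument in Lemma~\ref{lem:higherRank}, $F$ is a torsion-free sheaf, so $\rk F\ge 1$. If $\rk F\ge 2$, then either the sheaf map $F\to I_Z$ is non-injective and Corollary~\ref{cor:walls} gives $\rho_W^2\le\varrho_{H,D,n}$ (a contradiction), or $F\to I_Z$ is injective and $F$ is a subsheaf of the rank-one sheaf $I_Z$ (again a contradiction). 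Hence $\rk F=1$, and then $F\to I_Z$ is a monomorphism of sheaves (otherwise Lemma~\ref{lem:higherRank} would force $\rho_W^2\le 0$). Thus $F$ is a rank-one torsion-free subsheaf of $I_Z$, that is, $F\cong I_{Z'}(-L)$ with $Z'$ zero-dimensional and $L$ effective; the lemma preceding Proposition~\ref{prop:critDiv} (maximality of $W$) forces $Z'=\emptyset$, and Proposition~\ref{prop:critDiv} shows $L$ computes $W$. Finally, the inclusion $\OO_X(-L)\hookrightarrow I_Z\hookrightarrow\OO_X$ is a section of $\OO_X(L)$ whose zero locus is a curve $C\in|L|$ containing $Z$, so $\OO_X(-L)=\OO_X(-C)$ inside $I_Z$ and $G=I_Z/\OO_X(-C)=I_{Z\subset C}$, the asserted sequence.

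For the last assertion, assume the critical divisor computing $W$ is unique as a numerical class, and fix a stability condition $\sigma_0$ on $W$. It suffices to prove $\OO_X(-C)$ and $I_{Z\subset C}$ are each $\sigma_0$-stable, for then they are precisely the Jordan--H\"older factors of $I_Z$; they are already $\sigma_0$-semistable by definition of a wall. If $\OO_X(-C)$ were not $\sigma_0$-stable, a stable first Jordan--H\"older factor $F'\subsetneq\OO_X(-C)$ would be a $\sigma_0$-semistable subobject of $I_Z$ in $\cA_\beta$ of slope $\nu_{\sigma_0}(I_Z)$, so by the analysis of the second paragraph $F'=\OO_X(-L')$ with $L'$ computing $W$, whence $L'\equiv L$ by uniqueness; but then the inclusion $\OO_X(-C)=\OO_X(-L)\hookrightarrow\OO_X(-L')=F'$ is a nonzero section of the numerically trivial line bundle $\OO_X(L-L')$, hence (an effective divisor numerically equivalent to $0$ being trivial) an isomorphism, contradicting $F'\subsetneq\OO_X(-C)$. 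Likewise, if $I_{Z\subset C}$ were not $\sigma_0$-stable, a stable first Jordan--H\"older factor $F''\subsetneq I_{Z\subset C}$ has preimage $\tilde F$ in $I_Z$ fitting in $0\to\OO_X(-C)\to\tilde F\to F''\to 0$; then $\tilde F$ is a rank-one $\sigma_0$-semistable subobject of $I_Z$ of slope $\nu_{\sigma_0}(I_Z)$, so again $\tilde F=\OO_X(-L'')$ with $L''\equiv L$, the inclusion $\OO_X(-C)\hookrightarrow\tilde F$ is an isomorphism, and $F''=0$, a contradiction.

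The only real computation is the rearrangement $8d^2(H\cdot D)^2(\rho_{W_0}^2-\varrho_{H,D,n})=(2y+(H\cdot D)^2)(y-(H\cdot D)^2)$, which is routine. I expect the main obstacle to be the last paragraph, where one must work carefully inside the tilted heart $\cA_\beta$: one should check that the relevant monomorphisms are monomorphisms of sheaves, that cokernels of same-slope subobjects remain $\sigma_0$-semistable (so that the second-paragraph analysis applies verbatim), and that preimages are formed correctly, before combining uniqueness of the critical divisor with the elementary fact that an effective divisor numerically equivalent to zero vanishes. Everything else is a direct application of results already at hand.
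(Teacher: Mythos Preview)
Your proposal is correct and takes essentially the same approach as the paper: compute the radius of the wall $W_0$ given by $\OO_X(D)$, verify $\rho_{W_0}^2\ge\varrho_{H,D,n}$ precisely when $n\ge\eta_{H,D}$ (strictly when $n>\eta_{H,D}$), and invoke Theorem~\ref{thm:Gieseker} and Corollary~\ref{cor:walls}. Your explicit factorization $(2y+(H\cdot D)^2)(y-(H\cdot D)^2)$ is just a clean way to carry out the inequality the paper asserts without details.

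Where you diverge slightly is in the Jordan--H\"older assertion: the paper argues in one line that uniqueness of the critical divisor forces a unique destabilizing subobject and hence a length-two filtration, whereas you prove directly that $\OO_X(-C)$ and $I_{Z\subset C}$ are each $\sigma_0$-stable. Your route is more careful and in fact justifies the paper's terse claim. One small correction: in your stability argument for $\OO_X(-C)$, the inclusion runs $F'=\OO_X(-L')\hookrightarrow\OO_X(-C)=\OO_X(-L)$, not the other way; this gives a section of $\OO_X(L'-L)$, and the rest of your argument (numerically trivial effective divisor is zero) goes through unchanged.
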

\begin{proof}
Observe that the inequality $n\geq \eta_{H,D}$ automatically implies the inequality $2n>D^2$ needed to apply Theorem \ref{thm:Gieseker}.

Let $W$ be the wall for $X^{[n]}$ in the $(H,D)$-slice corresponding to $\OO_{X}(D)$.  The center $(s_W,0)$ of $W$ was computed in Equation (\ref{Dcenter}), and $\rho_W^2=s_W^2$.  We find that $\rho_W^2\geq \varrho_{H,D,n}$ holds when $n\geq \eta_{H,D}$, with strict inequality when $n>\eta_{H,D}$. 

When $n >\eta_{H,D}$ there can be no higher-rank destabilizing subobject of an $I_Z$ destabilized along the Gieseker wall, so there is an exact sequence as claimed.  Furthermore, if there is only one critical divisor computing the wall, then there is a unique destabilizing subobject along the wall, so the Jordan--H\"older filtration has length two.
\end{proof}

\subsection{Classes of divisors} In this subsection we give an elementary computation of the class of the divisor corresponding to a wall in a given slice of the stability manifold.  Similar results have been obtained by Liu \cite{Liu}, but the result is critical to our discussion so we include the proof.  See \cite[\S4]{BayerMacri} for more details on the definitions and results we use here. 

Throughout this subsection, let ${\bf v}\in K_{0}(X)$ be a vector such that the moduli space $M_{H,D}({\bf v})$ of $(H,D)$-Gieseker semistable sheaves admits a (quasi-)universal family $\cE$ which is unique up to equivalence (Hilbert schemes $X^{[n]}$ are examples of such spaces).  We also let $\sigma = (Z,\cA)$ be a stability condition in the closure of the Gieseker chamber for ${\bf v}$ in the $(H,D)$-slice.  Then there is a well-defined corresponding divisor $D_{\sigma}\in N^1(M_{H,D-K_X/2}({\bf v}))$ which is independent of the choice of $\cE$.  

Let $({\bf v},{\bf w}) = \chi({\bf v}\cdot {\bf w})$ be the Euler pairing on $K_{\num}(X)_\R$, and write ${\bf v}^\perp \subset K_{\num}(X)_\R$ for the orthogonal complement with respect to this pairing.  The correspondence between stability conditions and divisor classes is understood in terms of the Donaldson homomorphism $$\lambda: {\bf v}^\perp \to N^1(M_{H,D-K_X/2}({\bf v})).$$ Since the Euler pairing is nondegenerate, there is a unique vector $ {\bf w}_\sigma\in {\bf v}^\perp$ such that $$\Im\left(-\frac{Z({\bf w}')}{Z({\bf v})}\right) = ({\bf w'},{\bf w_\sigma})$$ for all ${\bf w}'\in K_{\num}(X)_\R$.  Bayer and Macr\`i show that $D_\sigma = \lambda({\bf w}_\sigma)$.  In what follows, we write vectors in $K_{\num}(X)_\R$ as $(\ch_0,\ch_1,\ch_2)$.

\begin{proposition}
\label{prop:classComputation}
With the above assumptions, suppose $\sigma$ lies on a numerical wall $W$ in the $(H,D)$-slice with center $(s_W,0)$.  Then ${\bf w}_\sigma$ is a multiple of $$(-1,-\frac{1}{2}K_X + s_W H +D,m)\in {\bf v}^\perp,$$  where $m$ is determined by the requirement ${\bf w}_\sigma\in {\bf v}^\perp$.

In particular, if $X$ has irregularity $0$ and ${\bf v} = (1,0,-n)$ is the vector for $X^{[n]}$, then the divisor $D_\sigma$ is a multiple of $$\frac{1}{2}K_X^{[n]}-s_W H^{[n]}-D^{[n]} - \frac{1}{2}B.$$
\end{proposition}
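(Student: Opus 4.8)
The plan is to pin down the vector ${\bf w}_\sigma\in{\bf v}^{\perp}$ representing the real-linear functional $\ell({\bf w}'):=\Im\bigl(-Z({\bf w}')/Z({\bf v})\bigr)$ by a direct computation, and then push it through the Donaldson homomorphism $\lambda$. Write $\sigma=\sigma_{\beta,\alpha}$ with $(\beta,\alpha)\in W$ and $Z=Z_{\beta,\alpha}$. Since $\ell({\bf v})=\Im(-1)=0$, nondegeneracy of the Euler pairing already forces ${\bf w}_\sigma\in{\bf v}^{\perp}$. Expanding $-Z({\bf w}')/Z({\bf v})$ and taking imaginary parts rewrites
\[
\ell({\bf w}')=\frac{\Im Z({\bf v})}{|Z({\bf v})|^{2}}\bigl(\Re Z({\bf w}')-\kappa\,\Im Z({\bf w}')\bigr),\qquad \kappa:=\frac{\Re Z({\bf v})}{\Im Z({\bf v})},
\]
which makes sense since $\Im Z({\bf v})\neq 0$ for $\sigma$ on a semicircular wall ($\beta\neq\mu_{H,D}({\bf v})$). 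So ${\bf w}_\sigma$ is a nonzero scalar multiple of the vector representing $\Psi({\bf w}'):=\Re Z({\bf w}')-\kappa\,\Im Z({\bf w}')$, and it remains to (i) express $\kappa$ through $s_W$ and (ii) read off that vector.

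For step (i) — which is where $s_W$ enters — I would show $\kappa=\beta-s_W$ at every point of $W$. From the definition of $Z_{\beta,\alpha}$ one has, for a class of nonzero rank, $\Im Z_{\beta,\alpha}({\bf v})=H^{2}\ch_0({\bf v})(\mu-\beta)$ and $\Re Z_{\beta,\alpha}({\bf v})=H^{2}\ch_0({\bf v})\bigl(\Delta-\tfrac12(\mu-\beta)^{2}+\tfrac12\alpha^{2}\bigr)$, with $\mu=\mu_{H,D}({\bf v})$ and $\Delta=\Delta_{H,D}({\bf v})$; so the claim $\kappa=\beta-s_W$ is equivalent to $2\Delta+\alpha^{2}-(\mu-\beta)^{2}=2(\beta-s_W)(\mu-\beta)$. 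Substituting the wall equation $(\beta-s_W)^{2}+\alpha^{2}=\rho_W^{2}$ and the radius formula $\rho_W^{2}=(\mu-s_W)^{2}-2\Delta$ of (\ref{rhoFormula}), and writing $u=\mu-\beta$, $v=\beta-s_W$ (so $\mu-s_W=u+v$), the identity collapses to $(u+v)^{2}-u^{2}-v^{2}=2uv$, which is trivial.

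For step (ii) I would expand $\Psi$ using $\Re Z_{\beta,\alpha}=-\ch_2^{D+\beta H}+\tfrac{\alpha^{2}H^{2}}{2}\ch_0$ and $\Im Z_{\beta,\alpha}=H\cdot\ch_1^{D+\beta H}$. After expanding the twisted characters and collecting terms, the divisor paired with $\ch_1({\bf w}')$ equals $(D+\beta H)-\kappa H=D+s_W H$ by step (i), so
\[
\Psi({\bf w}')=-\ch_2({\bf w}')+(D+s_W H)\cdot\ch_1({\bf w}')+c\,\ch_0({\bf w}')
\]
for a constant $c$ depending only on $\sigma$. Then, expanding the Euler pairing $({\bf w}',{\bf u})=\chi({\bf w}'\cdot{\bf u})$ by Riemann--Roch and matching the coefficients of $\ch_2({\bf w}')$, $\ch_1({\bf w}')$, $\ch_0({\bf w}')$ with the displayed expression, I would read off in turn $\ch_0({\bf u})=-1$, then $\ch_1({\bf u})=-\tfrac12 K_X+s_W H+D$ (the $-\tfrac12 K_X$ coming from the degree-one part of the Todd class), and finally $\ch_2({\bf u})=m$, forced by ${\bf u}\in{\bf v}^{\perp}$. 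As ${\bf w}_\sigma$ is a nonzero multiple of ${\bf u}$, this is the first assertion.

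For the last statement, specialize to $X$ of irregularity $0$ and ${\bf v}=(1,0,-n)$, so that $N^{1}(X^{[n]})=\{L^{[n]}:L\in N^{1}(X)\}\oplus\R B$. Here I would invoke the standard description of the Donaldson homomorphism $\lambda$ attached to the universal ideal sheaf $\cE$ on $X\times X^{[n]}$: it sends $(w_0,w_1,w_2)\in{\bf v}^{\perp}$ to $-w_1^{[n]}+\tfrac{w_0}{2}B$, as one checks by restricting $\cE$ to the test curves $\tilde C_{[n]}$ and $C_{[n]}$ of \S\ref{ssec-divisorsAndCurves} and to a fiber of the Hilbert--Chow morphism and applying Grothendieck--Riemann--Roch (cf.\ \cite[\S4]{BayerMacri}). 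Feeding the multiple of $(-1,-\tfrac12 K_X+s_W H+D,m)$ from above into $\lambda$ and using $D_\sigma=\lambda({\bf w}_\sigma)$ then yields that $D_\sigma$ is a multiple of $-(-\tfrac12 K_X+s_W H+D)^{[n]}-\tfrac12 B=\tfrac12 K_X^{[n]}-s_W H^{[n]}-D^{[n]}-\tfrac12 B$. I expect the only real obstacle to be the bookkeeping that extracts $s_W$ cleanly (the identity $\kappa=\beta-s_W$ on $W$) together with fixing the precise normalization of $\lambda$ on $X^{[n]}$ so that the $L^{[n]}$- and $B$-coefficients land as stated; everything else is routine linear algebra and Riemann--Roch.
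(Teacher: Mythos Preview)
Your proposal is correct and follows essentially the same approach as the paper: both compute ${\bf w}_\sigma$ by unwinding the defining identity via Riemann--Roch and then feed the result into the Donaldson map, with the key step being the identity $\kappa=\beta-s_W$ on $W$ (equivalently, the paper's ``straightforward calculation'' $\tfrac{u}{v}+\beta=\nu_\sigma({\bf v})+\beta=s_W$ for $-1/Z({\bf v})=u+iv$, since $u/v=-\kappa$). The only difference is cosmetic --- the paper evaluates on test classes $(0,0,1)$ and $(0,C',0)$ and uses adjunction, whereas you expand the functional in full and match coefficients via surface Riemann--Roch --- but these are two ways of doing the same linear algebra.
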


\begin{remark}
Suppose $X$ has irregularity $0$.  Up to scale, the divisors induced by stability conditions in the $(H,D)$-slice give a ray in $N^1(X^{[n]})$ emanating from the class $H^{[n]}\in \Nef(X)\subset \Nef(X^{[n]})$.  The particular ray is determined by the choice of the twisting divisor $D$.
\end{remark}

\begin{proof}[Proof of Proposition \ref{prop:classComputation}]
Since $\sigma$ is in the $(H,D)$-slice, write $\sigma = \sigma_{\beta,\alpha}$ and $(Z,\cA) = (Z_{\beta,\alpha},\cA_\beta)$ for short.  Put $z = -1/Z({\bf v}) = u+iv$.  We evaluate the identity $$\Im(z Z({\bf w}')) = ({\bf w'}, {\bf w}_\sigma)$$ defining ${\bf w}_\sigma$ on various classes ${\bf w'}$ to compute ${\bf w}_\sigma$.

Write the Chern character ${\bf w}_\sigma = (r,C,d)$.  Then $$-v = \Im(z Z(0,0,1)) =((0,0,1),{\bf w}_\sigma)=r,$$ so $r = -v$.  Next, for any curve class $C'$, $$(u+\beta v)(C'\cdot H) + v(C'\cdot D)=\Im(z Z(0,C',0))=((0,C',0),{\bf w}_\sigma) = \chi(0,-vC',C'\cdot C).$$ By Riemann--Roch and adjunction, $$\chi(0,-vC',C'\cdot C) = -v\left( \left( -\frac{1}{v} (C'\cdot C)+ \frac{1}{2} (C')^2\right)-\frac{1}{2}(C')^2-\frac{1}{2}(C'\cdot K_X)\right)=C'\cdot C+\frac{v}{2}(K_X\cdot C'),$$ so $$C'\cdot C = (u+\beta v)(C'\cdot H)+v(C'\cdot D)-\frac{v}{2}(C'\cdot K_X)$$ for every class $C'$.  Thus for any class $C'$ with $C'\cdot H = 0$, we have $C'\cdot C = v(C'\cdot D)-\frac{v}{2}(C'\cdot K_X);$ it follows that there is some number $a$ with $$C =  - \frac{v}{2}K_X + aH+ vD.$$ Considering $C = H$ shows that $a = u+\beta v$.  Therefore $${\bf w}_\sigma = (-v,-\frac{v}{2}K_X + (u+\beta v)H,m), $$ where $m$ is chosen such that ${\bf w}_\sigma\in {\bf v}^\perp$.

Finally, a straightforward calculation shows that
\[\frac{u}{v} + \beta = \nu_\sigma({\bf v})+ \beta = s_W\]
holds for all $(\beta,\alpha)$ along $W$.  The follow up statement for Hilbert schemes follows by computing the Donaldson homomorphism.
\end{proof}

\subsection{Dual curves}

Suppose $D_{\sigma_0}$ is the nef divisor corresponding to the Gieseker wall for $X^{[n]}$ in the $(H,D)$-slice.  Showing that $D_{\sigma_0}$ is an extremal nef divisor amounts to showing that there is some curve $\gamma\subset X^{[n]}$ with $D_{\sigma_0}\cdot \gamma=0$.  By the Positivity Lemma, this happens when $\gamma$ parameterizes objects of $X^{[n]}$ which are generically $S$-equivalent with respect to $\sigma_0$.  

In every case where we computed the Gieseker wall, the wall can be given by a destabilizing subobject which is a line bundle $\OO_X(-C)$ with $C$ an effective curve.  If $Z$ is a length $n$ subscheme of $C$, then there is a destabilizing sequence $$0\to \OO_X(-C)\to I_Z \to I_{Z\subset C}\to 0.$$ If $\ext^1(I_{Z\subset C},\OO_X(-C)) \geq 2$, then curves of objects of $X^{[n]}$ which are generically $S$-equivalent with respect to $\sigma_0$ are obtained by varying the extension class.  We obtain the following general result.

\begin{lemma}\label{lem:curvesExist}
Suppose the Gieseker wall for $X^{[n]}$ in the $(H,D)$-slice is computed by the subobject $\OO_X(-C)$, where $C$ is an effective curve class of arithmetic genus $p_a(C)$.  If $n\geq p_a(C)+1$, then the corresponding nef divisor $D_{\sigma_0}$ is extremal.
\end{lemma}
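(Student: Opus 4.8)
The plan is to exhibit an explicit rational curve $\gamma \subset X^{[n]}$ whose general member parameterizes an ideal sheaf that is $S$-equivalent with respect to $\sigma_0$ to the others, so that $D_{\sigma_0}\cdot\gamma = 0$ by the Positivity Lemma. By Corollary \ref{cor:Dbest} (or directly by hypothesis), a general $I_Z$ destabilized along the Gieseker wall sits in an extension
\[
0 \to \OO_X(-C) \to I_Z \to I_{Z\subset C} \to 0,
\]
with $Z$ a length $n$ subscheme lying on a fixed curve $C \in |L|$. The idea is to fix the curve $C$ and the subscheme $Z\subset C$, and then vary the \emph{extension class} in $\Ext^1(I_{Z\subset C},\OO_X(-C))$: a pencil of such classes gives a $\PP^1$ of objects, all of which have the same Jordan--H\"older factors $\OO_X(-C)$ and $I_{Z\subset C}$ along $\sigma_0$, hence are pairwise $S$-equivalent. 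One must check that a general such extension is actually an ideal sheaf of $n$ points (i.e.\ a non-split, torsion-free rank one sheaf with the right invariants) so that the family really is a curve in $X^{[n]}$ and not something degenerate; non-splitness is automatic off the zero class, and torsion-freeness of the middle term of an extension of a torsion-free sheaf by a line bundle is standard.

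The key computation is the dimension count showing $\ext^1(I_{Z\subset C},\OO_X(-C)) \geq 2$, which is where the hypothesis $n \geq p_a(C)+1$ enters. First I would use $\OO_C$ as a bridge: from $0\to\OO_X(-C)\to\OO_X\to\OO_C\to 0$ one relates $\Ext^\bullet(I_{Z\subset C},\OO_X(-C))$ to cohomology on $X$ and on $C$. Concretely, tensoring and taking the long exact sequence, together with $\Hom(I_{Z\subset C},\OO_X) = \Hom(I_{Z\subset C},\OO_X(-C)) = 0$ (the former since $I_{Z\subset C}$ is the ideal of a nonempty subscheme of $C$, or by degree reasons; there are no maps because of the support), one reduces to computing $\hom$ and $\ext^1$ from $I_{Z\subset C}$ into $\OO_C$. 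Here $I_{Z\subset C} = \OO_C(-Z')$ for the divisor $Z'$ on $C$ of degree $n$ cut out by $Z$ (using that $Z\subset C$ has length $n$ and $C$ is a curve), so $\Ext^\bullet_C(\OO_C(-Z'),\OO_C) = H^\bullet(C,\OO_C(Z'))$. By Riemann--Roch on $C$,
\[
h^0(C,\OO_C(Z')) - h^1(C,\OO_C(Z')) = n + 1 - p_a(C),
\]
so $h^0 \geq n+1-p_a(C) \geq 2$ whenever $n \geq p_a(C)+1$. Threading $h^0(C,\OO_C(Z')) \geq 2$ back through the $\Ext$ long exact sequence yields $\ext^1(I_{Z\subset C},\OO_X(-C)) \geq 2$, possibly after accounting for a bounded correction term ($h^1(\OO_X)$, $h^2(\OO_X(-C))$, etc.) — but one should verify those correction terms either vanish or only help, which is the one genuinely fiddly bookkeeping point.

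With $\ext^1 \geq 2$ in hand, projectivizing gives $\PP(\Ext^1(I_{Z\subset C},\OO_X(-C)))$ of dimension $\geq 1$; choose a line $\PP^1$ inside it. Pulling back the universal extension along this $\PP^1$ produces a flat family over $\PP^1$ of objects of class $(1,0,-n)$ which are $\sigma_0$-semistable with JH factors $\OO_X(-C), I_{Z\subset C}$, hence a morphism $\gamma\colon\PP^1\to M_{\sigma_0}({\bf v}) = X^{[n]}$. Two general points of the image are $S$-equivalent with respect to $\sigma_0$, so the Positivity Lemma gives $D_{\sigma_0}\cdot\gamma = 0$; since $D_{\sigma_0}$ is nef and $\gamma$ is a genuine curve (the family is non-isotrivial as the extension class varies, so $\gamma$ is non-constant — worth a sentence to confirm, e.g.\ distinct extension classes give non-isomorphic sheaves), $D_{\sigma_0}$ lies on the boundary of $\Nef(X^{[n]})$, i.e.\ is extremal. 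The main obstacle is the second paragraph: pinning down $\ext^1(I_{Z\subset C},\OO_X(-C))$ precisely enough — controlling all the error terms in the $\Ext$ spectral sequence / long exact sequences coming from the possibly non-vanishing higher cohomology of $\OO_X$ and $\OO_X(-C)$ — to be sure the bound $n\geq p_a(C)+1$ really suffices rather than just $n \gg 0$.
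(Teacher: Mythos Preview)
Your overall strategy matches the paper's: show $\ext^1(I_{Z\subset C},\OO_X(-C))\geq 2$, then vary the extension class to produce a curve of $S$-equivalent objects orthogonal to $D_{\sigma_0}$. The difference is in how you obtain the $\ext^1$ bound, and here the paper's route is substantially cleaner than yours.

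You attempt to compute $\ext^1$ by unwinding long exact sequences and reducing to $H^0(C,\OO_C(Z'))$ via Riemann--Roch on $C$, and you correctly flag that threading this back through the sequences introduces correction terms from $h^1(\OO_X)$, $h^2(\OO_X(-C))$, etc.\ that you have not controlled. The paper avoids all of this by computing the Euler characteristic directly: bilinearity of $\chi(\cdot,\cdot)$ together with Serre duality gives
\[
\chi(I_{Z\subset C},\OO_X(-C)) = p_a(C)-1-n,
\]
and then $\ext^1 \geq \hom + \ext^2 - \chi \geq -\chi = n+1-p_a(C)\geq 2$ with no bookkeeping at all. Note that this is exactly the Riemann--Roch number you computed on $C$, so you were circling the same quantity; the point is that working with $\chi$ on $X$ rather than chasing individual $\Ext$ groups makes the correction terms disappear, and also removes your implicit assumption that $I_{Z\subset C}$ is a line bundle on $C$ (which needs $C$ smooth or $Z$ in the smooth locus). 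The paper in fact reserves the identification $\Ext^1(I_{Z\subset C},\OO_X(-C))\cong H^0(\OO_C(Z))$ for a remark after the proof, under the additional hypothesis that $C$ is smooth.

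One small error to note: you assert that ``torsion-freeness of the middle term of an extension of a torsion-free sheaf by a line bundle is standard,'' but $I_{Z\subset C}$ is a pure torsion sheaf supported on $C$, not torsion-free, so that sentence does not apply as stated. The paper does not spell out this point either; it is content to say that curves ``can be constructed by varying the extension class.''
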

\begin{proof}
Bilinearity of the Euler characteristic $\chi(\cdot,\cdot)$ and Serre duality shows that $$\chi(I_{Z\subset C},\OO_X(-C))= p_a(C) - 1 - n.$$ Therefore, once $n\geq p_a(C)+1$ we will have $\chi(I_{Z\subset C},\OO_X(-C))\leq -2$, and curves orthogonal to $D_{\sigma_0}$ can be constructed by varying the extension class.
\end{proof}

Combining Lemma \ref{lem:curvesExist} with our previous results on the computation of the Gieseker wall gives us the following asymptotic result.

\begin{theorem}\label{thm:asymptotic}
Fix a slice $(H,D)$ for $\Stab(X)$.  There is some $L\in \CrDiv(H,D)$ such that for all $n \gg 0$ the Gieseker wall is computed by $\OO_X(-L)$. Furthermore, the corresponding nef divisor is extremal.    
\end{theorem}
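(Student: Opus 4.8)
The statement is essentially a synthesis of the results already established in this section, so the strategy is to combine Corollary \ref{cor:Dbest} with Lemma \ref{lem:curvesExist}, after first pinning down which critical divisor $L$ does the work for all large $n$. First I would recall that $\CrDiv(H,D)/{\sim}$ is finite, so there are only finitely many numerical walls given by line bundles $\OO_X(-L)$ with $L$ critical. For each such $L$, Lemma \ref{lem:rk1Center} gives the center $s_W(L,n) = -\frac{2n + L^2 + 2(D\cdot L)}{2(H\cdot L)}$ of the corresponding wall, and $\rho_W(L,n)^2 = s_W(L,n)^2 - 2\Delta_{H,D}(I_Z)$; here $H\cdot L > 0$ since $L$ is critical (equivalently $\mu_{H,D}(\OO_X(-L))>0$, and note $H\cdot(-D)>0$ because $-D$ is effective — or handle $D=0$ trivially). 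The key elementary point is to understand how the radius grows with $n$: since $|s_W(L,n)|$ grows linearly in $n$ with leading coefficient $1/(H\cdot L)$, and $2\Delta_{H,D}(I_Z) = 2n/d + (\text{bounded})$ grows only linearly with a smaller leading term once $H\cdot L$ is small, the wall given by the critical divisor $L$ minimizing $H\cdot L$ will eventually dominate all the others.

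**Choosing $L$.** More precisely, I would compare, for two critical divisors $L_1, L_2$, the functions $\rho_W(L_i,n)^2$ as $n\to\infty$. Writing $\ell_i = H\cdot L_i$, the leading term of $\rho_W(L_i,n)^2$ in $n$ is $\left(\frac{1}{\ell_i^2} - \frac{2}{d}\right)n^2$ if $\ell_i^2 < d/2$ — but one must be careful: actually the dominant behavior is governed by $s_W^2 = \frac{n^2}{\ell_i^2} + O(n)$ versus $2\Delta = \frac{2n}{d} + O(1)$, so $\rho_W(L_i,n)^2 = \frac{n^2}{\ell_i^2} + O(n)$ regardless, and the $n^2$-coefficient is simply $1/\ell_i^2$. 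Hence for large $n$ the wall is largest for the critical divisor $L$ with $H\cdot L$ smallest; if several critical divisors share the minimal value of $H\cdot L$, I break the tie using the $O(n)$ term, whose relevant coefficient involves $L^2 + 2(D\cdot L)$, and if still tied the walls coincide numerically and any such $L$ may be chosen. In all cases there is a well-defined $L\in\CrDiv(H,D)$ (or a numerical class of one) such that for $n\gg 0$ the wall given by $\OO_X(-L)$ is strictly the largest among walls from critical divisors. Then $n\geq \eta_{H,D}$ holds for $n\gg 0$, so Corollary \ref{cor:Dbest} applies and identifies this wall as the Gieseker wall, with $I_Z$ destabilized by $\OO_X(-C)$ for $C\in|L|$.

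**Extremality.** For the final clause, I would simply invoke Lemma \ref{lem:curvesExist}: the arithmetic genus $p_a(C) = p_a(L)$ is a fixed number depending only on $L$ (hence only on $H$ and $D$), so once $n \geq p_a(L) + 1$ — which again holds for all $n\gg 0$ — the divisor $D_{\sigma_0}$ corresponding to the Gieseker wall is extremal, with an orthogonal curve obtained by varying the extension class of $0\to\OO_X(-C)\to I_Z\to I_{Z\subset C}\to 0$. Taking $n$ larger than the maximum of the finitely many thresholds produced above (the threshold for $L$ to strictly dominate all competing critical divisors, the bound $\eta_{H,D}$, and $p_a(L)+1$) completes the argument.

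**Main obstacle.** The only genuine content beyond bookkeeping is the comparison of wall radii as $n\to\infty$ to select $L$: one must verify that the ordering of the finitely many walls stabilizes for large $n$, which amounts to checking that the functions $n\mapsto \rho_W(L,n)^2$ for distinct critical classes $L$ either eventually separate or are identically equal — a consequence of their being (at worst) quadratic polynomials in $n$ with the leading coefficient determined by $H\cdot L$. I would also need the minor observation that $\rho_W(L,n)^2 > 0$ for $n\gg 0$ so that these walls are genuinely nonempty, which is immediate since $\rho_W^2 = \frac{n^2}{(H\cdot L)^2} + O(n)\to\infty$.
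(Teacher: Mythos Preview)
Your proposal is correct and follows the same strategy as the paper: finiteness of $\CrDiv(H,D)/{\sim}$, eventual stabilization of the ordering of the associated walls, Corollary~\ref{cor:Dbest} to identify the Gieseker wall, and Lemma~\ref{lem:curvesExist} for extremality. The paper's version is marginally cleaner in one spot: rather than comparing $\rho_W(L,n)^2$ (quadratic in $n$), it compares the centers $s_W(L,n)$, which by Lemma~\ref{lem:rk1Center} are \emph{linear} in $n$, so the eventual ordering is immediate; since the semicircular walls are nested, ordering by center and by radius agree, and your tie-breaking analysis collapses to the observation that two linear functions either eventually separate or coincide.
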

\begin{proof}
Recall that the set $\CrDiv(H,D)/{\sim}$ of critical divisors modulo numerical equivalence is finite; say $\{L_1,\ldots,L_m\}$ is a set of representatives.  For $1\leq i \leq m$, let $(s_{i}(n),0)$ be the center of the wall $\OO_X(-L_i)$ for $X^{[n]}$.  Then $s_i(n)$ is a linear function of $n$ by Lemma \ref{lem:rk1Center}, so there is some $i$ with $s_i(n)\leq s_j(n)$ for all $1\leq j\leq m$ and $n\gg 0$.  Then by  Corollary \ref{cor:Dbest} the Gieseker wall is given by $\OO_X(-L_i)$.  Again increasing $n$ if necessary, the divisor $D_{\sigma_0}$ corresponding to the Gieseker wall is extremal by Lemma \ref{lem:curvesExist}.
\end{proof}

\begin{remark}The requirement $n\geq p_a(C) +1$ in Lemma \ref{lem:curvesExist} is not typically sharp.  For example, if $|C|$ contains a smooth curve we may as well assume $C$ is smooth.  Then $I_{Z\subset C}$ is a line bundle on $C$, and $$\Ext^1(I_{Z\subset C},\OO_X(-C)) \cong H^0(\OO_C(Z)).$$ Thus $g_n^1$'s on $C$ give curves which are orthogonal to $D_{\sigma_0}$.  The following fact from Brill--Noether theory therefore provides curves on $X^{[n]}$ for smaller values on $n$.

\begin{lemma}\cite{ACGH}
If $C$ is smooth of genus $g$, then it has a $g_n^1$ for any $n\geq \lceil{\frac{g+2}{2}}\rceil$.
\end{lemma}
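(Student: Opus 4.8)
The plan is to derive this from the classical Existence Theorem for special divisors (due to Kempf and to Kleiman--Laksov), as presented in \cite{ACGH}. Recall that for a smooth projective curve $C$ of genus $g$ and integers $r,d\geq 0$, the \emph{Brill--Noether number} is $\rho(g,r,d) = g - (r+1)(g-d+r)$, and the Existence Theorem asserts that $W^r_d(C) = \{L\in \Pic^d(C) : h^0(L)\geq r+1\}$ is nonempty (indeed of dimension at least $\rho$) whenever $\rho(g,r,d)\geq 0$. So the proof is essentially a one-line specialization of this result.

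First I would specialize to $r=1$ and $d=n$. Then $\rho(g,1,n) = g - 2(g-n+1) = 2n - g - 2$, so the hypothesis $\rho\geq 0$ reads $n\geq \tfrac{g+2}{2}$. Since $n$ is an integer, this is equivalent to $n\geq \lceil\tfrac{g+2}{2}\rceil$, which is exactly the hypothesis of the lemma. The Existence Theorem therefore produces a line bundle $L$ of degree $n$ on $C$ with $h^0(L)\geq 2$; choosing any $2$-dimensional subspace of $H^0(L)$ gives a pencil of degree $n$ on $C$, i.e.\ a $g_n^1$ in the sense needed for the construction of orthogonal curves in the remark above.

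A minor point to address is base points: the pencil just produced need not be base-point-free, so a priori it only gives a degree $\leq n$ map to $\PP^1$. This is harmless for the application, since all that is actually used is the existence of a length $n$ subscheme $Z\subset C$ with $h^0(\OO_C(Z))\geq 2$, equivalently $\Ext^1(I_{Z\subset C},\OO_X(-C))\geq 2$, and the line bundle $L$ above provides such a $Z$. If one nevertheless wants a genuine degree-$n$ cover, one can instead apply the Existence Theorem with $n$ replaced by the smallest admissible value $n_0=\lceil\tfrac{g+2}{2}\rceil$, pass to a base-point-free sub-pencil to get a $g_{n'}^1$ with $n'\leq n_0\leq n$, and then add $n-n'$ general base points.

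The only substantive ingredient is the Existence Theorem itself, which we take as a black box from \cite{ACGH}; consequently there is no real obstacle in this proof. The only things to be careful about are matching the ceiling in the statement to the inequality $\rho(g,1,n)\geq 0$ (handling the parity of $g$ correctly) and being explicit that the base-point issue is inessential for the way $g_n^1$'s are used here.
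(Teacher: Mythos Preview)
Your proposal is correct. The paper does not give its own proof of this lemma; it simply cites it from \cite{ACGH}, and your derivation via the Kempf/Kleiman--Laksov Existence Theorem (specialized to $r=1$, $d=n$, so that $\rho(g,1,n)=2n-g-2\geq 0$ exactly when $n\geq\lceil(g+2)/2\rceil$) is precisely the standard argument found there. Your remark that base points are harmless for the application is also apt, since as the surrounding Remark makes explicit, what is actually needed is $h^0(\OO_C(Z))\geq 2$ for some length-$n$ scheme $Z$.
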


For specific surfaces, some curves in $|C|$ may have highly special linear series giving better constructions of curves on $X^{[n]}$.
\end{remark}

\section{Picard rank one examples}
\label{sec-generalType}

For the rest of the paper, we will apply the methods of Section \ref{sec-nef} to compute $\Nef(X^{[n]})$ for several interesting surfaces $X$.    These applications form the heart of the paper.

\subsection{Picard rank one in general}\label{ssec:picRank1} Suppose $\Pic(X) \cong \Z H$ for some ample divisor $H$.  If we choose $D = -aH$, where $a>0$ is the smallest positive integer such that $aH$ is effective, then $\CrDiv(H,D) = \{-D\}$.

\begin{lemma}\label{lem:picRank1}
Suppose $\Pic(X) = \Z  H$ and $aH$ is the minimal effective class.  If $n\geq (aH)^2 = a^2 d$, then the Gieseker wall for $X^{[n]}$ is the wall given by $\OO_X(-aH)$.
\end{lemma}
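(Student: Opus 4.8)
The plan is to recognize this as an immediate application of Corollary~\ref{cor:Dbest} for the slice $(H,D)$ with $D = -aH$, after computing the two numerical quantities involved.

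First I would pin down the critical divisors. I claim $\CrDiv(H,D) = \{-D\} = \{aH\}$, as already asserted in the text preceding the lemma. Indeed, every nonzero effective class on $X$ has the form $bH$ with $b \geq a$ by minimality of $aH$, so $H\cdot(bH) = bd \geq ad = H\cdot(-D)$, and the strict inequality $H\cdot L < H\cdot(-D)$ appearing in the definition of $\CrDiv(H,D)$ cannot hold for any such $L$. Since in Picard rank one numerical equivalence is detected by intersection with $H$, the class $aH$ is the unique critical divisor, so $\OO_X(-aH)$ gives the largest (in fact the only semicircular) wall coming from a critical divisor.

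Next I would compute $\eta_{H,D}$. Writing $d = H^2$, we have $H\cdot D = -ad$ and $D^2 = a^2 d$, hence
\[
\eta_{H,D} = \frac{(H\cdot D)^2 + d\, D^2}{2d} = \frac{a^2 d^2 + a^2 d^2}{2d} = a^2 d = (aH)^2.
\]
So the hypothesis $n \geq (aH)^2$ is exactly $n \geq \eta_{H,D}$; in particular it forces $2n > D^2$, the running hypothesis of Theorem~\ref{thm:Gieseker} and Corollary~\ref{cor:Dbest}. Applying Corollary~\ref{cor:Dbest} then shows the Gieseker wall for $X^{[n]}$ is the largest wall given by a critical divisor, which by the first step is the wall given by $\OO_X(-aH)$, as desired.

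There is essentially no hard step: the only things to verify are the elementary identity $\eta_{H,D} = (aH)^2$ and the claim $\CrDiv(H,D) = \{-D\}$, both immediate from the Picard rank one hypothesis and the minimality of $aH$. All the substantive content — ruling out higher-rank destabilizing subobjects with larger walls (Corollary~\ref{cor:walls}), and confirming that some ideal sheaf is genuinely destabilized along the wall of $\OO_X(-aH)$ (Proposition~\ref{prop:critDiv}) — is already packaged inside Corollary~\ref{cor:Dbest}.
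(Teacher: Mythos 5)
Your proof is correct and follows exactly the paper's route: the paper's proof of Lemma \ref{lem:picRank1} is simply ``apply Corollary \ref{cor:Dbest} with $D=-aH$,'' with the identification $\CrDiv(H,D)=\{-D\}$ stated just before the lemma. Your computations of $\eta_{H,D}=a^2d$ and of the critical divisors merely make explicit the details the paper leaves to the reader, so there is nothing to add.
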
 
\begin{proof}
Apply Corollary \ref{cor:Dbest} with $D = -aH$.
\end{proof}

Note that when $n>a^2d$, additional information about the Jordan--H\"older filtration can be obtained as in Corollary \ref{cor:Dbest}.  We use Formula (\ref{Dcenter}) to see that the wall $W$ given by $\OO_X(-aH)$ has center $(s_W,0)$ with $$s_W = \frac{a}{2}-\frac{n}{ad}$$ Combining Lemmas \ref{lem:picRank1}, \ref{lem:curvesExist}, and Proposition \ref{prop:classComputation}, we have proved the following general result.

\begin{theorem}\label{thm:picRank1}
Suppose $\Pic X \cong \Z H$ and $aH$ is the minimal effective class.  If $n\geq a^2d$ then the divisor \begin{equation}\label{picRank1Div}\frac{1}{2}K_X^{[n]}+\left(\frac{a}{2}+\frac{n}{ad}\right)H^{[n]}-\frac{1}{2}B\end{equation} is nef. Additionally, if $n\geq p_a(aH)+1$ then this divisor is extremal, so $\Nef(X^{[n]})$ is spanned by this divisor and $H^{[n]}$. An orthogonal curve is given by letting $n$ points move in a $g_n^1$ on a curve of class $aH$.\end{theorem}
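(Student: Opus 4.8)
The plan is to assemble the three ingredients named just before the statement. Throughout I would work in the $(H,D)$-slice with $D=-aH$, where $a>0$ is minimal with $aH$ effective; as observed at the start of Section~\ref{ssec:picRank1}, this choice forces $\CrDiv(H,D)=\{-D\}=\{aH\}$, so there is no optimization to do among critical divisors. Since $n\geq a^2d=\eta_{H,-aH}$, Lemma~\ref{lem:picRank1} (which is Corollary~\ref{cor:Dbest} applied with $D=-aH$) identifies the Gieseker wall $W$ for $X^{[n]}$ with the wall given by $\OO_X(-aH)$; Equation~(\ref{Dcenter}) then puts its center at $(s_W,0)$ with $s_W=\tfrac a2-\tfrac n{ad}$. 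I would fix a stability condition $\sigma_0$ on $W$. By the large volume limit (Proposition~\ref{prop:large_volume_limit}), $W$ bounds the Gieseker chamber, so the universal ideal sheaf on $X\times X^{[n]}$ is a flat family of $\sigma_0$-semistable objects and the Positivity Lemma supplies a nef class $D_{\sigma_0}\in N^1(X^{[n]})$. Feeding $\mathbf v=(1,0,-n)$, $D^{[n]}=-aH^{[n]}$ and the value of $s_W$ into Proposition~\ref{prop:classComputation} shows $D_{\sigma_0}$ is a multiple of
\[
\tfrac12 K_X^{[n]}-s_WH^{[n]}-D^{[n]}-\tfrac12 B \;=\; \tfrac12 K_X^{[n]}+\left(\tfrac a2+\tfrac n{ad}\right)H^{[n]}-\tfrac12 B,
\]
which is exactly~(\ref{picRank1Div}). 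The scalar is positive by the remark after Proposition~\ref{prop:classComputation}: the classes arising in the slice sweep out a ray issuing from $H^{[n]}\in\Nef(X)$, on which $D_{\sigma_0}$ is ample just inside the Gieseker chamber and sits at the outer end. Hence~(\ref{picRank1Div}) is nef.

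\emph{Extremality and the orthogonal curve.} Assuming in addition $n\geq p_a(aH)+1$, I would invoke Lemma~\ref{lem:curvesExist}: the wall is computed by $\OO_X(-C)$ with $C\in|aH|$ of arithmetic genus $p_a(aH)$, so the lemma applies verbatim and $D_{\sigma_0}$ is extremal. To exhibit an orthogonal curve explicitly, I would take a smooth $C\in|aH|$ (a general member if $|aH|$ has one; otherwise one argues with the normalization of a reduced irreducible member), note that $n\geq p_a(aH)+1\geq\lceil\tfrac{p_a(aH)+2}{2}\rceil$ so $C$ carries a $g^1_n$ by Brill--Noether, and let the $n$ points move in this pencil to get a rational curve $C_{[n]}\subset X^{[n]}$. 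For a fiber $Z$ of the pencil there is the destabilizing sequence $0\to\OO_X(-C)\to I_Z\to I_{Z\subset C}\to 0$ with $I_{Z\subset C}\cong\OO_C(-Z)$; since any two members $Z,Z'$ of a $g^1_n$ are linearly equivalent divisors on $C$, the quotients $\OO_C(-Z)$ and $\OO_C(-Z')$ are isomorphic, so $I_Z$ and $I_{Z'}$ share the same set of $\sigma_0$-Jordan--H\"older factors and are therefore $S$-equivalent. By the Positivity Lemma, $D_{\sigma_0}\cdot C_{[n]}=0$, so $C_{[n]}$ is the desired orthogonal curve.

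\emph{The cone, and the main obstacle.} Since $\Pic X\cong\Z H$, the space $N^1(X^{[n]})=\R H^{[n]}\oplus\R B$ is two-dimensional. By Section~\ref{ssec-divisorsAndCurves} one edge of $\Nef(X^{[n]})$ is $\R_{\geq0}H^{[n]}=\Nef(X)$ and every nef class has non-positive coefficient of $B$; the class~(\ref{picRank1Div}) is nef, has nonzero $B$-coefficient (so lies off $N^1(X)$), and is orthogonal to the effective curve $C_{[n]}$, so it spans the remaining edge. Hence $\Nef(X^{[n]})$ is spanned by $H^{[n]}$ and~(\ref{picRank1Div}). The genuine content of the argument is all inherited from earlier: Lemma~\ref{lem:higherRank} and Corollary~\ref{cor:walls} excluding higher-rank destabilizing subobjects (which is what makes Lemma~\ref{lem:picRank1} true) and the $\ext^1$-count behind Lemma~\ref{lem:curvesExist}. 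Within the present step the only points that require genuine care are pinning down the sign of the scalar produced by Proposition~\ref{prop:classComputation}, and checking that members of a $g^1_n$ yield honestly $S$-equivalent (not merely equal-slope) objects along $W$ — which works precisely because the quotient sheaves $\OO_C(-Z)$ remain abstractly isomorphic as $Z$ varies in the pencil.
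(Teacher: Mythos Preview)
Your proposal is correct and follows essentially the same route as the paper: the paper's proof is precisely the one-line assembly of Lemma~\ref{lem:picRank1}, Proposition~\ref{prop:classComputation} (with the center computed via Equation~(\ref{Dcenter})), and Lemma~\ref{lem:curvesExist}, and you have reproduced exactly that assembly with additional detail. Your extra care about the sign of the scalar and the explicit $S$-equivalence verification for the $g^1_n$ curve are reasonable elaborations that the paper leaves implicit (the latter is essentially the content of the Remark following Lemma~\ref{lem:curvesExist}).
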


\begin{remark} If $\Pic(X) = \Z H$ and $H$ is already effective, then a different argument computes the Gieseker wall so long as $2n > d$, improving the bound in Lemma \ref{lem:picRank1}.  However, fine information about the Jordan--H\"older filtration of a destabilized ideal sheaf is not obtained.  In fact, if $n\leq d$ then the destabilizing behavior can be complicated.  For instance, a scheme $Z$ contained in the complete intersection of two curves of class $H$ will admit an interesting map from $\OO_X(-H)^{\oplus 2}$.
\end{remark}
\begin{proposition}
\label{prop:2nBound}
Suppose $\Pic X =\Z H$ and $H$ is effective. If $2n > d$, then the Gieseker wall for $X^{[n]}$ in the $(H,-H)$-slice is the wall given by $\OO_X(-H)$.  Thus the divisor (\ref{picRank1Div}) with $a=1$ is nef.
\end{proposition}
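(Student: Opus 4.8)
The plan is to show that the Gieseker wall $W_G$ for ${\bf v}=(1,0,-n)$ in the $(H,D)$-slice with $D=-H$ (which exists by Proposition~\ref{prop:large_volume_limit}) is exactly the numerical wall $W$ cut out by $\OO_X(-H)$. By Lemma~\ref{lem:rk1Center} (equivalently Equation~(\ref{Dcenter})) the center of $W$ is $s_W=\tfrac12-\tfrac nd$, and since $2n>d$ forces $s_W<0$ while $\OO_X(-H)$ has vanishing twisted slope, formula~(\ref{rhoFormula}) gives radius $\rho_W=-s_W=\tfrac nd-\tfrac12$; thus $W$ passes through the origin and the $\beta$-coordinate on $W$ ranges over $\bigl(1-\tfrac{2n}d,\,0\bigr)$. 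I would first record that $W$ is an actual wall: for a curve $C\in|H|$ (nonempty as $H$ is effective) and a length-$n$ subscheme $Z\subset C$, the sequence $0\to\OO_X(-H)\to I_Z\to I_{Z\subset C}\to 0$ is destabilizing in $\cA_\beta$ along $W$, since $\beta<0=\mu_{H,D}(\OO_X(-H))$ places $\OO_X(-H)$ in $\cT_\beta$ there, while $I_{Z\subset C}$ is torsion and $I_Z$ is an ideal sheaf with $\beta<1$; this is the reasoning used in the proof of Proposition~\ref{prop:critDiv}. By Bertram's Nested Wall Theorem it then remains to rule out the possibility that $W_G$ is strictly larger than $W$.

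Suppose it is. By definition of the Gieseker wall there is an ideal sheaf $I_Z$ with a destabilizing sequence $0\to F\to I_Z\to G\to 0$ in $\cA_\beta$ on $W_G$, valid along all of $W_G$ by the Nested Wall Theorem, with $F,I_Z,G$ semistable of a common slope. As in the proof of Lemma~\ref{lem:higherRank}, $F$ is a torsion-free sheaf lying in $\cT_{\beta'}$ for every $(\beta',\alpha')\in W_G$, and $K:=\ker(F\to I_Z)=\mathcal H^{-1}(G)$ is a torsion-free sheaf lying in $\cF_{\beta'}$ for every such point. The image $I$ of $F\to I_Z$ is a nonzero rank-one subsheaf of $\OO_X$, so with $r=\rk F$ we have $\rk K=r-1$ and $\ch F=\ch K+\ch I$.

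If $r=1$ then $K=0$ and $F\subset I_Z$ computes $W_G$, so by the lemma preceding Proposition~\ref{prop:critDiv} together with $\Pic X=\Z H$ and $H$ effective we get $F=\OO_X(-mH)$ for an integer $m\ge1$. A short computation with Lemma~\ref{lem:rk1Center} shows that the wall given by $\OO_X(-mH)$ has center $1-\tfrac m2-\tfrac n{md}$, which is strictly less than $s_W$ only when $m>\tfrac{2n}d>1$; for such $m$ one has $m\ge2$ and $m^2>\tfrac{2n}d$, whence the leftmost $\beta$-value on that wall equals $1-m=\mu_{H,D}(\OO_X(-mH))$, so $\OO_X(-mH)$ is never in $\cT_\beta$ along it — impossible for a sheaf that is a subobject of $I_Z$ in $\cA_\beta$. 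So $m=1$ and $W_G=W$, contradicting strict largeness. If instead $r\ge 2$: since $W_G\supsetneq W$ are disjoint nested semicircles, the $\beta$-range of $W_G$ strictly contains $\bigl(1-\tfrac{2n}d,0\bigr)$ and so meets both $\{\beta>0\}$ and $\{\beta<0\}$. From $F\in\cT_{\beta'}$ along $W_G$ ($F$ is a quotient of itself) we get $\mu_{H,D}(F)>0$; from $K\in\cF_{\beta'}$ along $W_G$ ($K$ is a subsheaf of itself) we get $\mu_{H,D}(K)<0$; and $\mu_{H,D}(I)\le 1$ because $I\subset\OO_X$. Additivity of $\ch$ and the definition of $\mu_{H,D}$ give
\[
 r\,\mu_{H,D}(F)=(r-1)\,\mu_{H,D}(K)+\mu_{H,D}(I),
\]
whose left side is positive while its right side is $<0+1=1$ since $r-1\ge1$; hence $0<r\,\mu_{H,D}(F)<1$. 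But $r\,\mu_{H,D}(F)=r+\tfrac{H\cdot\ch_1(F)}d$ is an integer because $\ch_1(F)\in\Pic X=\Z H$, a contradiction. Either way $W_G=W$, and nefness of the divisor~(\ref{picRank1Div}) with $a=1$ then follows from Proposition~\ref{prop:classComputation}, exactly as in Theorem~\ref{thm:picRank1}.

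The delicate step is the higher-rank case $r\ge 2$. When $d/2<n<d$ the bound $\varrho_{H,D,n}=n/(4d)$ of Corollary~\ref{cor:walls} exceeds $\rho_W^2=(n/d-\tfrac12)^2$, so one cannot eliminate a hypothetical larger higher-rank wall by the radius comparison that works once $n\ge\eta_{H,D}=d$ (this is how Corollary~\ref{cor:Dbest} argues). The resolution is to trade the radius estimate for an integrality constraint special to Picard rank one: the requirements that $F$ lie in $\cT_{\beta'}$ and that $K$ lie in $\cF_{\beta'}$ along the whole wall, together with $I\subset\OO_X$, force $r\,\mu_{H,D}(F)$ strictly into the open unit interval, which is impossible once $\ch_1(F)$ is a multiple of $H$. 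I expect the remaining effort to be pure bookkeeping: pinning down the precise endpoints of the $\beta$-ranges of $W$, $W_G$, and the walls $W(m)$, and transcribing from Lemma~\ref{lem:higherRank} the structural facts that $F$ and $K$ are torsion-free sheaves of the stated ranks with $F\to I_Z$ factoring through the rank-one subsheaf $I$.
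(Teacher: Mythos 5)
Your argument is correct, and its engine is the same as the paper's---the wall $W$ for $\OO_X(-H)$ passes through the origin, a strictly larger actual wall must straddle the line $\beta=0$, and the integrality forced by $\Pic X=\Z H$ then rules such a wall out---but the paper packages this more compactly and with no case division on the rank of the destabilizer. In the paper's proof one fixes a point $(0,\alpha)$ on the hypothetical larger wall $W'$; the destabilizing sequence lies in $\cA_0$ there, so $\Im Z_{0,\alpha}(F)\geq 0$ and $\Im Z_{0,\alpha}(G)\geq 0$, both are integer multiples of $d$ because $\Pic X=\Z H$, and they sum to $\Im Z_{0,\alpha}(I_Z)=d$; hence one of them vanishes, giving that object infinite slope and contradicting the common finite slope along $W'$. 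Your rank $r\geq 2$ case is exactly this computation recast in slope language (your quantity $r\,\mu_{H,D}(F)$ equals $\Im Z_{0,\alpha}(F)/d$), with the strict positivity and the upper bound extracted from heart membership at positive and negative $\beta$ along $W_G$ and from $I\subset\OO_X$, rather than read off at the single point $\beta=0$; both versions silently use that a wall strictly larger than $W$ reaches $\beta>0$, which is fine since $\rho^2=(s-\mu_{H,D}({\bf v}))^2-2\Delta_{H,D}({\bf v})$ with $\Delta_{H,D}({\bf v})=n/d>0$ makes the right endpoint move right as the center moves left. Your separate rank-one analysis---reducing to $\OO_X(-mH)$ via the lemma before Proposition~\ref{prop:critDiv} and showing that for $md>2n$ the line bundle leaves the heart along its own wall because that wall's left endpoint sits at $\beta=\mu_{H,D}(\OO_X(-mH))$---is correct but not needed: the paper's argument covers $r=1$ uniformly, and even your own integrality step does, since for $r=1$ it pins $\mu_{H,D}(F)$ to the value $1=\mu_{H,D}(I_Z)$, which only produces the vertical wall. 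What your organization buys is the explicit observation, worth keeping, that in Picard rank one the integrality of $H\cdot\ch_1$ substitutes for the radius bound of Corollary~\ref{cor:walls} in the range $d/2<n<d$ where that bound is too weak, which is precisely why this proposition needs a separate proof from Corollary~\ref{cor:Dbest}; your rank-one discussion also isolates the geometric reason higher multiples $\OO_X(-mH)$ never destabilize, namely tangency of their numerical walls to the $\beta$-axis at $\mu_{H,D}(\OO_X(-mH))$.
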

\begin{proof}
Let $W$ be the numerical wall given by $\OO_X(-H)$.
By the proof of Proposition \ref{prop:critDiv}, if no actual wall is larger than $W$ then $W$ is an actual wall, and hence the Gieseker wall.
If there is a destabilizing sequence $$0\to F\to I_Z \to G\to 0$$ giving a wall $W'$ larger than $W$, then $F,G\in \cA_0$ since $W$ passes through the origin in the $(\beta,\alpha)$-plane.  Fix $\alpha>0$ such that $(0,\alpha)$ lies on $W'$. We have $$H \cdot \ch_1^{-H}(F) = \Im Z_{0,\alpha}(F)\geq 0 \quad \mbox{and}\quad H\cdot \ch_1^{-H}(G) = \Im Z_{0,\alpha}(G) \geq 0.$$ Since $d$ is the smallest intersection number of $H$ with an integral divisor and $$d=\Im Z_{0,\alpha}(I_Z) = \Im Z_{0,\alpha}(F)+\Im Z_{0,\alpha}(G)$$ we conclude that either $\Im Z_{0,\alpha}(F)=0$ or $\Im Z_{0,\alpha}(G) = 0$.  Thus either $F$ or $G$ has infinite $\sigma_{0,\alpha}$-slope, contradicting that $(0,\alpha)$ is on $W'$.
\end{proof}

We now further relax the lower bound on $n$ needed to guarantee the existence of orthogonal curve classes in special cases.

\subsection{Surfaces in $\PP^3$} By the Noether--Lefschetz theorem, a very general surface $X\subset \PP^3$ of degree $d\geq 4$ is smooth of Picard rank $1$ and irregularity 0. Let $H$ be the hyperplane class and put $D = -H$. We have $K_X = (d-4)H$, so Proposition \ref{prop:2nBound} shows that if $2n>d$ then the divisor $$\left(\frac{d}{2}-\frac{3}{2}+\frac{n}{d}\right)H^{[n]}-\frac{1}{2}B$$ is nef.  If $C$ is any smooth hyperplane section then the projection from a point on $C$ gives a degree $d-1$ map to $\PP^1$, so $C$ carries a $g_n^1$ for any $n\geq d-1$.  We have proved the following result.

\begin{proposition}\label{prop:P3}
Let $X$ be a smooth degree $d$ hypersurface in $\PP^3$ with Picard rank $1$.  The divisor $$\left(\frac{d}{2}-\frac{3}{2}+\frac{n}{d}\right)H^{[n]} - \frac{1}{2}B$$ on $X^{[n]}$ is nef if $2n>d$.  If $n\geq d-1$, then it is extremal, and together with $H^{[n]}$ it spans $\Nef(X^{[n]})$.
\end{proposition}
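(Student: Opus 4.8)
The plan is to assemble Proposition \ref{prop:P3} from the machinery already in place, specializing to $X\subset\PP^3$ smooth of degree $d$ with $\Pic(X)=\Z H$, $H$ the hyperplane class, and $D=-H$. First I would record the numerical input: by adjunction $K_X=(d-4)H$, and $H$ is effective (indeed $H^2=d>0$ and $H$ is very ample on $X$), so Proposition \ref{prop:2nBound} applies in the $(H,-H)$-slice: once $2n>d$, the Gieseker wall for $X^{[n]}$ is the wall $W$ given by $\OO_X(-H)$. Feeding this into Proposition \ref{prop:classComputation} with $\bf v=(1,0,-n)$ (irregularity $0$, so $N^1(X^{[n]})$ is spanned by $H^{[n]}$ and $B$), the corresponding nef divisor is a positive multiple of $\tfrac12 K_X^{[n]}-s_W H^{[n]}-D^{[n]}-\tfrac12 B$, and with $D=-H$ and $s_W=\tfrac12-\tfrac{n}{d}$ from Formula (\ref{Dcenter}) (equivalently the $a=1$ case of (\ref{picRank1Div})), substituting $K_X=(d-4)H$ collapses this to $\bigl(\tfrac{d}{2}-\tfrac32+\tfrac{n}{d}\bigr)H^{[n]}-\tfrac12 B$. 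This establishes nefness for $2n>d$.

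For extremality under the stronger hypothesis $n\geq d-1$, the strategy is to exhibit a curve $\gamma\subset X^{[n]}$ orthogonal to this divisor via the Positivity Lemma: it suffices to produce a one-parameter family of length-$n$ subschemes that are generically $S$-equivalent with respect to a stability condition $\sigma_0$ on the Gieseker wall. Along $W$, any $Z$ lying on a smooth hyperplane section $C\in|H|$ sits in a destabilizing sequence $0\to\OO_X(-C)\to I_Z\to I_{Z\subset C}\to 0$ with $I_{Z\subset C}$ a line bundle on $C$ (since $C$ is smooth), and as noted in the Remark following Theorem \ref{thm:asymptotic}, $\Ext^1(I_{Z\subset C},\OO_X(-C))\cong H^0(\OO_C(Z))$. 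So a $g_n^1$ on $C$ — a pencil of degree-$n$ divisors — yields a rational curve $C_{[n]}\subset X^{[n]}$ all of whose points have $S$-equivalent associated graded objects (JH factors $\OO_X(-C)$ and the line bundle of the $g_n^1$), hence is orthogonal to $D_{\sigma_0}$. The genus of $C$ is $\binom{d-1}{2}$, which is large, so rather than invoke Brill--Noether I would use the geometric fact that a smooth plane-type section $C\subset\PP^3$ carries a $g_{d-1}^1$ given by projection from a point $p\in C$ onto a line; hence $C$ carries a $g_n^1$ for every $n\geq d-1$ (add $n-(d-1)$ base points). This produces the required orthogonal curve exactly in the stated range $n\geq d-1$.

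Finally, I would close the argument on the nef cone itself. Since $X$ has irregularity $0$ and Picard rank $1$, $N^1(X^{[n]})$ is two-dimensional, spanned by $H^{[n]}$ and $B$; as reviewed in Section \ref{ssec-divisorsAndCurves}, $H^{[n]}$ is a nef class lying on one boundary ray of $\Nef(X^{[n]})$ (it is orthogonal to curves contracted by the Hilbert--Chow morphism), and every nef class has non-positive $B$-coefficient. The divisor $\bigl(\tfrac{d}{2}-\tfrac32+\tfrac{n}{d}\bigr)H^{[n]}-\tfrac12 B$ is nef with strictly negative $B$-coefficient, and we have just shown it is orthogonal to the curve $C_{[n]}$, so it lies on the other boundary ray. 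Two boundary rays in a two-dimensional cone determine the cone, so $\Nef(X^{[n]})$ is spanned by $H^{[n]}$ and this divisor.

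I expect the main obstacle to be purely presentational rather than mathematical: everything follows by specializing earlier results, but one must be careful about the passage from "$D_{\sigma_0}$ is a multiple of" to an explicit representative, and about checking that the generic member of the chosen $g_n^1$ gives a subscheme $Z$ in general position on $C$ for which the destabilizing sequence genuinely holds on the wall (rather than only numerically) — this is where the hypothesis that $|H|$ contains a smooth curve, and that a general such $Z$ avoids any bad loci, is used. The Brill--Noether-free construction via projection from a point is the cleanest way to get the sharp bound $n\geq d-1$, and verifying that $n=d-1$ already suffices (as opposed to needing $n\gg 0$ as in Theorem \ref{thm:asymptotic}) is the one place where the specific geometry of hypersurfaces in $\PP^3$ genuinely improves on the general theory.
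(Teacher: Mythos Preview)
Your proposal is correct and follows essentially the same approach as the paper: apply Proposition \ref{prop:2nBound} with $D=-H$ and $K_X=(d-4)H$ to obtain nefness for $2n>d$, then use projection from a point on a smooth hyperplane section $C\in|H|$ to produce a $g_{d-1}^1$ (hence a $g_n^1$ for all $n\geq d-1$) and thereby an orthogonal curve $C_{[n]}$. Your exposition is more detailed than the paper's, but the ingredients and their assembly are identical; the concerns you raise at the end about the destabilizing sequence holding ``genuinely'' are non-issues, since the sequence $0\to\OO_X(-C)\to I_Z\to I_{Z\subset C}\to 0$ is always exact for $Z\subset C$ and its relevance on the wall is already established by Proposition \ref{prop:critDiv}.
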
 

\begin{remark}
The behavior of $\Nef(X^{[n]})$ for smaller $n$ in Proposition \ref{prop:P3} is more mysterious.  Even the cases $d=5$ and $n=2,3$ are interesting.
\end{remark}

\begin{remark}
The case $d=4$ of Proposition \ref{prop:P3} recovers a special case of \cite[Proposition 10.3]{BayerMacri} for K3 surfaces.  The case $d=1$ recovers the computation of the nef cone of $\PP^{2[n]}$ \cite{ABCH}.
\end{remark}

\subsection{Branched covers of $\PP^2$}  Next we consider cyclic branched covers of $\PP^2$.  Let $X$ be a very general cyclic degree $d$ cover of $\bP^2$, branched along a degree $e$ curve.  Note that this means that $d$ necessarily divides $e$.  We can view these covers as hypersurfaces in a weighted projective space, which gives us a Noether--Lefschetz type theorem: $\Pic X = \Z H$, generated by the pullback $H$ of the hyperplane class on $\PP^2$, provided that $X$ has positive geometric genus. The canonical bundle of $X$ is $$K_X = -3H + e\left(\frac{d-1}{d}\right)H = \left(\frac{e(d-1)}{d}-3\right)H.$$  Then $X$ will have positive geometric genus if $e \geq 3d/(d-1)$.

Setting $D = -H$, we see that if $2n > d$ then the divisor class  $$\left(\frac{e(d-1)}{2d}-1 +\frac{n}{d}\right)H^{[n]}-\frac{1}{2}B$$ is nef by Proposition \ref{prop:2nBound}. The preimage of a line is a curve of class $H$, and it carries a $g_d^1$ given by the map to $\PP^2$.  Therefore the above divisor is extremal once $n\geq d$.

\begin{proposition}\label{prop:cyclic}
Let $X$ be a very general degree $d$  cyclic cover of $\bP^2$ ramified along a degree $e$ curve, where $d$ divides $e$ and $e \geq \frac{3d}{d-1}$. The divisor
$$\left(\frac{e(d-1)}{2d}-1 +\frac{n}{d}\right)H^{[n]}-\frac{1}{2}B$$ on $X^{[n]}$ is nef if $2n > d$.
For $n \geq d$, this class is extremal, and together with $H^{[n]}$ it spans $\Nef(X^{[n]})$.
\end{proposition}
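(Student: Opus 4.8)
The plan is to funnel everything through Proposition \ref{prop:2nBound} and the dual-curve machinery of Section \ref{sec-nef}, exactly in the spirit of the preceding discussion. The only genuinely geometric input that must be supplied by hand is that a very general member of this family has $\Pic X = \Z H$ with $H$ effective. I would obtain this from a Noether--Lefschetz type statement: realizing $X$ as a degree $e$ hypersurface in the weighted projective space $\PP(1,1,1,e/d)$ (the cyclic cover $y^d = f(x_0,x_1,x_2)$ with $\deg f = e$), one runs the usual infinitesimal Noether--Lefschetz argument, which forces $\Pic X$ to be generated by the restriction of the ambient $\OO(1)$, i.e.\ by the pullback $H$ of the hyperplane class of $\PP^2$, as soon as $X$ has positive geometric genus. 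The hypothesis $e \geq 3d/(d-1)$ is precisely what makes $p_g(X) > 0$. Since $H$ is the pullback of an ample class under a finite morphism it is ample, and the pullback of a line shows $H$ is effective; note also $H^2 = \deg(\pi)\cdot(\text{line})^2 = d$, consistent with the normalization $H^2 = d$ of Section \ref{sec-nef}.

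Next I would pin down $K_X$. Either by adjunction in the weighted ambient space, or by the Hurwitz formula for the cyclic cover (ramification divisor $\tfrac{d-1}{d}eH$, since the reduced preimage of the branch curve is $\tfrac{e}{d}H$ with ramification index $d$, and $\pi^*K_{\PP^2} = -3H$), one gets $K_X = \bigl(\tfrac{e(d-1)}{d} - 3\bigr)H$. Substituting $a = 1$ and this value into the divisor class (\ref{picRank1Div}): the term $\tfrac12 K_X^{[n]}$ contributes $\bigl(\tfrac{e(d-1)}{2d} - \tfrac32\bigr)H^{[n]}$, which combines with the $\tfrac12 H^{[n]}$ coming from the ``$\tfrac a2$'' term to yield exactly $\bigl(\tfrac{e(d-1)}{2d} - 1 + \tfrac{n}{d}\bigr)H^{[n]} - \tfrac12 B$. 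Proposition \ref{prop:2nBound}, applied in the $(H,-H)$-slice, then asserts that for $2n > d$ the Gieseker wall for $X^{[n]}$ is the one cut out by $\OO_X(-H)$, and Proposition \ref{prop:classComputation} identifies the associated divisor $D_{\sigma_0}$ with precisely this class; hence it is nef whenever $2n > d$.

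For extremality I would exhibit an explicit orthogonal curve. Let $C \subset X$ be the preimage of a general line $\ell \subset \PP^2$; since a general line meets the branch curve transversally, Bertini gives that $C$ is a smooth irreducible member of $|H|$, and $\pi$ restricts to a degree $d$ morphism $C \to \ell \cong \PP^1$, i.e.\ a $g^1_d$ on $C$. For $n \geq d$, adjoining $n - d$ general base points to this pencil produces a $g^1_n$ on $C$. By the Remark following Theorem \ref{thm:asymptotic} --- using that $C$ is smooth, so $I_{Z \subset C}$ is a line bundle on $C$ and $\Ext^1(I_{Z\subset C}, \OO_X(-C)) \cong H^0(\OO_C(Z))$ has dimension $\geq 2$ --- the induced rational curve $C_{[n]} \subset X^{[n]}$ parameterizes ideal sheaves that are generically $S$-equivalent along the Gieseker wall, so $D_{\sigma_0}\cdot C_{[n]} = 0$ by the Positivity Lemma. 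Thus the class (\ref{picRank1Div}) with $a = 1$ is extremal. Combined with the description in Section \ref{ssec-divisorsAndCurves} --- $H^{[n]}$ spans the other extremal ray $\Nef(X^{[n]}) \cap N^1(X) = \Nef(X)$, and any nef class has nonpositive coefficient of $B$ --- this shows $\Nef(X^{[n]})$ is spanned by $H^{[n]}$ and the stated class, completing the proof.

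The main obstacle is the Picard-rank-one input: one must be sure the Noether--Lefschetz argument genuinely goes through in the (mildly singular, weighted) ambient space and that ``very general'' is taken within the family of such covers --- this is the one place the hypothesis $e \geq 3d/(d-1)$ enters. Everything afterwards is a short adjunction computation plus direct citation of the results of Section \ref{sec-nef}; the only other point worth a remark is that it is the \emph{special} $g^1_d$, not a Brill--Noether-general pencil (which would only exist for $n$ of order $e(d-1)/4$), that brings the extremality threshold all the way down to $n \geq d$.
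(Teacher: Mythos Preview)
Your proposal is correct and follows essentially the same route as the paper: Noether--Lefschetz in the weighted projective realization gives $\Pic X=\Z H$ under the $p_g>0$ hypothesis, the Hurwitz/adjunction computation of $K_X$ feeds into Proposition~\ref{prop:2nBound} to get nefness for $2n>d$, and the $g^1_d$ on the preimage of a line supplies the orthogonal curve for $n\geq d$. Your write-up is somewhat more explicit (checking $H^2=d$, noting smoothness of a general $C\in|H|$ via Bertini, and spelling out the base-point trick to promote the $g^1_d$ to a $g^1_n$), but the argument is the paper's own.
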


\section{Del Pezzo surfaces of degree one}
\label{sec-delPezzo}

In \cite{BertramCoskun}, Bertram and Coskun studied the birational geometry of $X^{[n]}$ when $X$ is a minimal rational surface or a del Pezzo surface.  In particular, they completely computed the nef cones of all these Hilbert schemes except in the case of a del Pezzo surface of degree $1$.  The constructions they gave were classical: they produced nef divisors from $k$-very ample line bundles, and dual curves by letting collections of points move in linear pencils on special curves.  

In this section, we will compute the nef cone of $X^{[n]}$, where $X$ is a smooth del Pezzo surface of degree $1$.  Then $X \cong \Bl_{p_1,\ldots,p_8} \PP^2$ for distinct points $p_1,\ldots,p_8$ with the property that $-K_X$ is ample (see \cite[Theorem 24.4]{Manin} or \cite[Ex. V.21.1]{Beauville}).  This application exhibits the full strength of the methods of Section \ref{sec-nef}.

\subsection{Notation and statement of results} 
Let $H$ be the class of a line and let $E_1,\ldots,E_8$ be the $8$ exceptional divisors over the $p_i$, so $\Pic(X) \cong \Z H \oplus \Z E_1\oplus \cdots \oplus \Z E_8$ and $K_X = -3H + \sum_i E_i$.  Recall that a \emph{$(-1)$-curve} on $X$ is a smooth rational curve of self-intersection $-1$.  It is simplest to describe the dual cone of effective curves.  We recommend reviewing \S\ref{ssec-divisorsAndCurves} for notation.

\begin{theorem}\label{thm-delPezzoCurves}
The cone of curves $\NE(X^{[n]})$ is spanned by all the classes $E_{[n]}$ given by $(-1)$-curves $E\subset X$, the class of a curve contracted by the Hilbert--Chow morphism, and the class $F_{[n]}$, where $F\in |{-K_X}|$ is an anticanonical curve. 
\end{theorem}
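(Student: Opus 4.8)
Write $\beta$ for the class of a fibre of the Hilbert--Chow morphism, and let $\mathcal{K}\subseteq N_1(X^{[n]})$ be the cone generated by the $240$ classes $E_{[n]}$, the class $\beta$, and the class $F_{[n]}$. Since $\mathcal K$ is finitely generated (hence closed), proving $\NE(X^{[n]})=\mathcal K$ is equivalent to $\overline{\NE}(X^{[n]})=\mathcal K$, and dually to $\Nef(X^{[n]})=\mathcal K^\vee$. The inclusion $\mathcal K\subseteq\overline{\NE}(X^{[n]})$ is just effectivity of the generators: a $(-1)$-curve $E\cong\PP^1$ admits a degree $n$ map to $\PP^1$, which produces $E_{[n]}$; the class $\beta$ is visibly effective; and a general member $F\in|{-K_X}|$ is a smooth curve of genus $1$, which for $n\geq 2$ carries a $g_n^1$ and so produces $F_{[n]}$. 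Dualizing gives $\Nef(X^{[n]})\subseteq\mathcal K^\vee$, so it remains to prove $\mathcal K^\vee\subseteq\Nef(X^{[n]})$; as $\mathcal K^\vee$ is a pointed polyhedral cone it suffices to show each of its extremal rays is a nef class, and I would organize these according to the coefficient of $B$.

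An extremal ray of $\mathcal K^\vee$ with vanishing $B$-coefficient lies in the face $\mathcal K^\vee\cap N^1(X)$, where $N^1(X)=\beta^\perp\subseteq N^1(X^{[n]})$. Because $-K_X=\tfrac1{240}\sum_E E$ is a non-negative combination of the classes of $(-1)$-curves, the inequality imposed by $F_{[n]}$ is automatic on this face, so it equals the dual of the cone spanned by the $(-1)$-curves of $X$; by the structure of the Mori cone of a del Pezzo surface this dual is exactly $\Nef(X)$, and every class of $\Nef(X)\subseteq N^1(X^{[n]})$ is nef by \S\ref{ssec-divisorsAndCurves}. Hence all such extremal rays are nef.

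The remaining extremal rays have strictly negative $B$-coefficient and, after rescaling, are each spanned by a class of the form $N^{[n]}-\tfrac12 B$ with $N\in N^1(X)$. For each of these I would exhibit an ample divisor $H$, an antieffective divisor $D$, and an effective curve $C$ on $X$ such that: by Lemma \ref{lem:rk1Center} and Proposition \ref{prop:classComputation} the numerical wall for $\OO_X(-C)$ in the $(H,D)$-slice carries a divisor which is a positive multiple of $N^{[n]}-\tfrac12 B$; this wall is the Gieseker wall, checked by combining Corollary \ref{cor:Dbest} (to reduce to walls from critical divisors once $n\geq\eta_{H,D}$) with the higher-rank bound $\rho_W^2\geq\varrho_{H,D,n}$ of Corollary \ref{cor:walls} and Theorem \ref{thm:Gieseker}; and $C$ carries a $g_n^1$, so that the curve produced by the Positivity Lemma (cf.\ Lemma \ref{lem:curvesExist} and the remark after it) is one of the generators $E_{[n]}$ or $F_{[n]}$, placing the divisor on $\partial\mathcal K^\vee$. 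A model computation is the slice $(H,D)=({-K_X},K_X)$ with destabilizing subobject $\OO_X(K_X)=\OO_X(-F)$: here $\eta_{H,D}=1$ and $\CrDiv(H,D)=\{0,-K_X\}$, so for every $n\geq 2$ the Gieseker wall is the wall of $\OO_X(-F)$ (its squared radius is $n^2-n+\tfrac14\geq\tfrac n4=\varrho_{H,D,n}$), the associated nef class is a positive multiple of ${-n}K_X^{[n]}-\tfrac12 B$, and $F_{[n]}$ is an orthogonal curve. The other rays are handled by analogous but combinatorially richer slice choices, organized by the action of the Weyl group $W(E_8)$ on the $240$ $(-1)$-curves; a finite convex-geometry check then shows that the nef classes so produced, together with $\Nef(X)$, span all of $\mathcal K^\vee$.

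The main obstacle is the construction for these ``new'' extremal rays. One must choose the slice $(H,D)$---with $D$ genuinely of Picard rank larger than $1$, not a multiple of $H$---so precisely that the estimate $\rho_W^2\geq\varrho_{H,D,n}$ ruling out higher-rank destabilizers holds for \emph{every} $n\geq 2$ rather than merely asymptotically, and so that the destabilizing curve has a $g_n^1$; tangled up with this is the preliminary task of enumerating the extremal rays of $\mathcal K^\vee$ up to $W(E_8)$-symmetry and matching each to an appropriate slice.
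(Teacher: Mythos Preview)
Your overall strategy---dualize, reduce to showing the extremal rays of $\mathcal K^\vee$ are nef, handle the rays with vanishing $B$-coefficient via $\Nef(X)$, and produce the remaining rays as Gieseker-wall divisors in suitable slices---is exactly the paper's, and your treatment of the $\Nef(X)$-face is correct.

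The gap is in your model computation and what you infer from it. The class $n(-K_X)^{[n]}-\tfrac12 B$ produced by the $(-K_X,K_X)$-slice is indeed nef and $F_{[n]}$-orthogonal, but it is \emph{not} an extremal ray of $\mathcal K^\vee$: one has $E_{[n]}\cdot\bigl(n(-K_X)^{[n]}-\tfrac12 B\bigr)=n-(n-1)=1>0$ for every $(-1)$-curve $E$, so this class lies in the relative interior of the $9$-dimensional facet $F_{[n]}^\perp\cap\mathcal K^\vee$. It therefore cannot be one of ``the rays,'' and the cone it generates together with $\Nef(X)$ does not contain $\mathcal K^\vee$. For instance, the genuine extremal ray through $(n-1)(-K_X)^{[n]}+\tfrac12(H-E_1)^{[n]}-\tfrac12 B$ is not in that span: subtracting $n(-K_X)^{[n]}-\tfrac12 B$ leaves $K_X+\tfrac12(H-E_1)$, which pairs to $-1$ with $E_2$ and hence is not nef. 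So ``the other rays are handled by analogous slice choices'' is doing all of the work, and no slice with fixed $H$ is indicated.

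What the paper supplies, and what your sketch is missing, is an \emph{$n$-dependent} polarization. After using the Weyl group to reduce to the single extremal nef class $N=H-E_1$ on $X$ (so only one new ray must be produced, not many), the paper takes
\[
P=\Bigl(n-\tfrac32\Bigr)(-K_X)+\tfrac12(H-E_1),\qquad D=K_X.
\]
With this choice one checks $\eta_{P,K_X}<n$ for all $n\geq 2$, the critical divisors are $-K_X$ together with those $(-1)$-curves $L$ satisfying $L\cdot(H-E_1)\leq 1$, and the Gieseker wall has center exactly $(-1,0)$, given by $\OO_X(K_X)$ and coinciding with the walls for $\OO_X(-L)$ whenever $L\cdot(H-E_1)=0$. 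Proposition~\ref{prop:classComputation} then yields precisely the extremal class $(n-1)(-K_X)^{[n]}+\tfrac12(H-E_1)^{[n]}-\tfrac12 B$. The dependence of $P$ on $n$ is what makes both the higher-rank bound and the critical-divisor analysis hold uniformly for every $n\geq 2$; a fixed slice cannot hit this moving target.
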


 The $240$ $(-1)$-curves $E$ on $X$ are well-known. The possible classes are $$(0;1) \quad (1; 1^2) \quad (2; 1^5) \quad (3;2,1^6) \quad (4; 2^3,1^5) \quad (5;2^6,1^2) \quad (6;3,2^7),$$ where e.g. $(4;2^3,1^5)$ denotes any class equivalent to $$4H - 2E_1-2E_2-2 E_3 -E_4-E_5-E_6-E_7-E_8$$ under the natural action of $S_8$ on $\Pic(X)$.  The cone of curves $\NE(X)$ is spanned by the classes of the $(-1)$-curves.  The Weyl group action on $\Pic(X)$ acts transitively on $(-1)$-curve classes, so it acts transitively on the extremal rays of $\NE(X)$.  We refer the reader to \cite[\S 26]{Manin} for details.  On the other hand, there are two orbits of extremal rays of $\Nef(X)$.
 
 \begin{proposition}
 For $k\geq 2$, let $X_k$ be the del Pezzo surface $X_k \cong \Bl_{p_1,\ldots,p_k} \PP^2$, where $p_1,\ldots,p_k$ are distinct points such that $-K_{X_k}$ is ample.  Then the Weyl group acts on the extremal rays of $\Nef(X_k)$, and there are two orbits.  The classes $H$ and $H-E_1$ are orbit representatives.
\end{proposition}
\begin{proof}
Since the Weyl group preserves the intersection pairing and $H$ and $H-E_1$ are extremal nef divisors with different self-intersections, there are at least two orbits.

For $k=2$, the nef cone is spanned by $H, H-E_1, H-E_2$.  The Weyl group $\Z/2\Z$ fixes $H$ and exchanges $H-E_1$ and $H-E_2$, so there are two orbits.

Now suppose $k> 2$ and $N$ is an extremal nef divisor on $X_k$.  Then $N$ is orthogonal to a face of $\NE(X)$, so there is a $(-1)$-curve orthogonal to $N$.  Since $k>2$, we may use the Weyl group to assume this $(-1)$-curve is $E_k$.  Then $N$ is a pullback $N = \pi^* N'$ along the blowdown map $\pi : X_k \to X_{k-1}$ contracting $E_k$.  Since $N$ is an extremal nef divisor on $X_k$, $N'$ is an extremal nef divisor on $X_{k-1}$: a nontrivial decomposition $N' = A + B$ with $A,B$ nef would pullback to a nontrivial decomposition of $N$.  Continuing in this fashion we see that up to the action of the Weyl group $N$ is the pullback of $H$ or $H-E_1$ from $X_2$.
\end{proof}

Consider the divisor class $(n-1)(-K_X)^{[n]} - \frac{B}{2}$.  If $E$ is any $(-1)$-curve on $X$, then $-K_X \cdot E = 1$, so $$E_{[n]}\cdot ((n-1)(-K_X)^{[n]} - \frac{1}{2}B) = (n-1)(-K_X\cdot E)-(n-1)=0.$$ Let $\Lambda\subset N^1(X^{[n]})$ be the cone spanned by divisors which are nonnegative on all classes $E_{[n]}$ and curves contracted by the Hilbert--Chow morphism.  It follows that $\Lambda \supset \Nef(X^{[n]})$ is spanned by $\Nef(X)\subset \Nef(X^{[n]})$ and the single additional class $(n-1)(-K_X)^{[n]}-\frac{B}{2}$.

However, $\Nef(X^{[n]})\subset \Lambda$ is a \emph{proper} subcone.  Indeed, if $F\in |{-K_X}|$ is an anticanonical curve then by Riemann-Hurwitz $F_{[n]}\cdot B = 2n$, so  $F_{[n]}\cdot ((n-1)(-K_X)^{[n]}-\frac{B}{2}) = -1.$  Let $\Lambda'\subset \Lambda$ be the subcone of $F_{[n]}$-nonnegative divisors.  Taking duals, we see that Theorem \ref{thm-delPezzoCurves} is equivalent to the next result.

\begin{theorem}\label{thm-delPezzoNef}
We have $\Nef(X^{[n]}) = \Lambda'$.
\end{theorem}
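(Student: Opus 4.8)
The plan is to establish the two inclusions $\Nef(X^{[n]})\subseteq\Lambda'$ and $\Lambda'\subseteq\Nef(X^{[n]})$ separately. The first is immediate and is essentially already recorded: the classes $E_{[n]}$ for $(-1)$-curves $E$, the class of a curve contracted by the Hilbert--Chow morphism, and $F_{[n]}$ for $F\in|{-K_X}|$ are all classes of honest curves in $X^{[n]}$, and $\Lambda'$ is cut out of $N^1(X^{[n]})$ by precisely nonnegativity on these, so every nef class lies in $\Lambda'$. For the reverse inclusion I would first exhibit a spanning set for the polyhedral cone $\Lambda'$. Write $R=(n-1)({-K_X})^{[n]}-\tfrac12 B$, so that $\Lambda=\Nef(X)+\R_{\ge 0}R$. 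Then $R\cdot F_{[n]}=-1<0$, while every nonzero class coming from $\Nef(X)$ meets $F_{[n]}$ strictly positively since $-K_X$ is ample; cutting $\Lambda$ by the half-space $\{F_{[n]}\ge 0\}$ therefore shows that $\Lambda'$ is spanned by the extremal rays of $\Nef(X)\subseteq\Nef(X^{[n]})$ together with the classes $D_\ell:=\ell^{[n]}+(\ell\cdot({-K_X}))\,R$, one for each extremal ray $\R_{\ge0}\ell$ of $\Nef(X)$, each lying in $F_{[n]}^{\perp}$. Since $\ell^{[n]}$ is nef on $X^{[n]}$ whenever $\ell$ is nef on $X$ (\S\ref{ssec-divisorsAndCurves}), it remains only to prove that every $D_\ell$ is nef.

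By the description of $\Nef(X)$ for a degree one del Pezzo surface (see \cite[\S 26]{Manin} and \S\ref{sec-delPezzo}), each extremal ray of $\Nef(X)$ is spanned by the pullback $H$ of the hyperplane class under a blow-down $X\to\PP^2$; the computation below uses only $H^2=1$, $H\cdot({-K_X})=3$, and the standard list of $(-1)$-curve classes, so it applies uniformly and we may take $\ell=H$. Then $D_\ell=H^{[n]}+3(n-1)({-K_X})^{[n]}-\tfrac32 B$, and the goal is to realize $\tfrac13 D_\ell$ as the Bayer--Macr\`i divisor $D_{\sigma_0}$ attached to the Gieseker wall in the $(H',D')$-slice of $\Stab(X)$ with twisting divisor $D'=K_X$ (antieffective, as $-K_X$ is ample) and polarization $H':=H+3(n-\tfrac32)({-K_X})$, which is ample for every $n\ge2$ as a positive combination of a nef class and an ample class. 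In this slice the line bundle $\OO_X(K_X)$ has vanishing twisted slope and discriminant, $\mu_{H',K_X}(\OO_X(K_X))=\Delta_{H',K_X}(\OO_X(K_X))=0$, so exactly as in the proof of Proposition \ref{prop:critDiv} the numerical wall $W$ it defines for $X^{[n]}$ passes through the origin of the $(\beta,\alpha)$-plane, with center computed from (\ref{Dcenter}) equal to $s_W=-\tfrac13$ and radius $\rho_W^2=s_W^2=\tfrac19$; it is an actual wall because $\OO_X(K_X)\hookrightarrow I_Z$ whenever $Z$ lies on an anticanonical curve. A direct substitution into the formula of Proposition \ref{prop:classComputation}, with $s_W=-\tfrac13$ and $D'=K_X$, then yields $D_{\sigma_0}=\tfrac13 D_\ell$.

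The crux, and the step I expect to require the most care, is to verify that $W$ is the \emph{Gieseker} wall, so that $\sigma_0$ lies in the closure of the Gieseker chamber, the universal ideal sheaf is a family of $\sigma_0$-semistable objects, and the Positivity Lemma applies. For this I would invoke Theorem \ref{thm:Gieseker}, which demands two things. First, $\rho_W^2\ge\varrho_{H',K_X,n}$: using $(H')^2=9(n-\tfrac12)^2-8$ and $H'\cdot K_X=-3(n-\tfrac12)$ this reduces to a single explicit polynomial inequality in $n$, valid for all $n\ge2$. Second, no critical divisor defines a strictly larger numerical wall: by Lemma \ref{lem:rk1Center}, the wall attached to $\OO_X(-L)$ for a $(-1)$-curve $L$ of degree $a:=H\cdot L$ has center $-\tfrac{m}{a+3m}$ with $m=n-\tfrac32$, which equals $-\tfrac13$ exactly when $a=0$ (so that $W$ is also defined by each $\OO_X(-E_i)$ for the eight exceptional curves $E_i$ of the chosen blow-down) and is $>-\tfrac13$ for $a=1,2$, while $(-1)$-curves of degree $a\ge 3$ fail to be critical. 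Finally one checks that an effective class $L$ which is not a $(-1)$-curve can be critical only when $n=2$---for $n\ge3$, criticality forces $-K_X\cdot L=1$ and hence $L$ is a $(-1)$-curve---and disposes of the finitely many remaining classes when $n=2$ by hand. Granting this, Theorem \ref{thm:Gieseker} identifies $W$ with the Gieseker wall, so $D_{\sigma_0}=\tfrac13 D_\ell$ is nef (indeed extremal, being orthogonal to $F_{[n]}$ and to $E_{1,[n]},\dots,E_{8,[n]}$; cf. Lemma \ref{lem:curvesExist}, noting $n\ge 2=p_a(F)+1$). Hence every generator of $\Lambda'$ is nef, so $\Lambda'\subseteq\Nef(X^{[n]})$, and together with the first inclusion this proves $\Nef(X^{[n]})=\Lambda'$.
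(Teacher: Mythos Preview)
Your overall strategy is exactly the paper's: show that each ``new'' extremal ray $D_\ell=\ell^{[n]}+(\ell\cdot(-K_X))\,R$ of $\Lambda'$ is nef by realizing it as the Bayer--Macr\`i divisor on the Gieseker wall in a slice with twisting divisor $K_X$ and an $n$-dependent polarization, then invoking Theorem~\ref{thm:Gieseker}/Corollary~\ref{cor:Dbest}. Your Gieseker-wall computation for $\ell=H$ (center $s_W=-\tfrac13$, the inequality $\rho_W^2\geq\varrho_{H',K_X,n}$, and the critical-divisor comparison via Lemma~\ref{lem:rk1Center}) is carried out correctly and is the precise analogue of the paper's argument.

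The gap is in your reduction to a single representative. The assertion that ``each extremal ray of $\Nef(X)$ is spanned by the pullback $H$ of the hyperplane class under a blow-down $X\to\PP^2$'' is false: the conic-bundle class $H-E_1$ is also extremal in $\Nef(X)$, since the eight $(-1)$-curves $E_2,\dots,E_8,\,H-E_1-E_2$ orthogonal to it are linearly independent and hence span a facet of $\NE(X)$. Because $(H-E_1)^2=0\neq 1=H^2$, the class $H-E_1$ is \emph{not} in the Weyl orbit of $H$, so your computation does not yet establish that $D_{H-E_1}$ is nef. The paper takes the complementary route: it chooses $\ell=H-E_1$, with polarization $P=(n-\tfrac32)(-K_X)+\tfrac12(H-E_1)$ and center $s_W=-1$, and its appeal to Weyl transitivity on extremal nef rays runs into the same issue in the other direction. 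A complete argument must treat one representative from each orbit of extremal nef rays (the hyperplane-pullback classes with $N^2=1$ and the conic-bundle classes with $N^2=0$); your computation handles the first, and the paper's Lemma~\ref{lem:dPcritical} together with the final Proposition handle the second.
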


To prove Theorem \ref{thm-delPezzoNef}, we must show that all the extremal rays of $\Lambda'$ are actually nef. Suppose $N\in \Nef(X)$ spans an extremal ray of $\Nef(X)$.  Then the cone spanned by $N^{[n]}$ and $(n-1)(-K_X)^{[n]}-\frac{B}{2}$ contains a single ray of $F_{[n]}$-orthogonal divisors, and this ray is an extremal ray of $\Lambda'$. Conversely, due to our description of the cone $\Lambda$, the extremal rays of $\Lambda'$ which are not in $\Nef(X)$ are all obtained in this way.

\subsection{Choosing a slice} More concretely, making use of the Weyl group action we may as well assume our extremal nef class $N\in \Nef(X)$ is either $H-E_1$ or $H$.  The corresponding $F_{[n]}$-orthogonal rays described in the previous paragraph are spanned by \begin{equation}\label{divClass1}(n-1)(-K_X)^{[n]}+\frac{1}{2}(H^{[n]}-E_1^{[n]})  -\frac{1}{2}B\end{equation} and \begin{equation}\label{divClass2}(n-1)(-K_X)^{[n]}+\frac{1}{3}H^{[n]}  -\frac{1}{2}B,\end{equation} respectively.  Our job is to show that these two classes are nef.  We will prove this by exhibiting these divisors as nef divisors on $X^{[n]}$ corresponding to the Gieseker wall for suitable choices of slices of $\Stab(X)$.

To show that the divisor (\ref{divClass1}) is nef, it is convenient to choose our polarization to be $$P=\left(n-\frac{3}{2}\right)(-K_X)+\frac{1}{2}(H-E_1)$$ (which depends on $n$!) and our antieffective class to be $D = K_X$.  Observe that $P$ is ample since it is the sum of an ample and a nef class.  If we show that the Gieseker wall $W$ in the $(P,K_X)$-slice has center $(s_W,0)=(-1,0)$, then Proposition \ref{prop:classComputation} implies the divisor class (\ref{divClass1}) is nef.  

Similarly, to show that the divisor (\ref{divClass2}) is nef, we choose the  polarization $$Q = \left(n-\frac{3}{2}\right)(-K_X)+\frac{1}{3}H$$ and antieffective divisor $D = K_X$.  Again, by Proposition \ref{prop:classComputation} we must show the Gieseker wall $W$ in the $(Q,K_X)$ slice has center $(s_W,0) = (-1,0)$. 

\subsection{Critical divisors} Our plan is to apply Corollary \ref{cor:Dbest} to compute the Gieseker wall in the $(P,K_X)$- and $(Q,K_X)$-slices.  We must first identify the set $\CrDiv(P,K_X)$ of critical divisors.  

\begin{lemma}\label{lem:dPcritical}
If $n> 2$, then the set $\CrDiv(P,K_X)$ consists of $-K_X$ and the classes $L$ of $(-1)$-curves on $X$ with $L\cdot (H-E_1) \leq 1$.

When $n=2$, the above classes are still critical.  Additionally, the class $H-E_1$ is critical, as is any sum of two $(-1)$-curves $L_1,L_2$ with $L_i\cdot (H-E_1)=0$.
\end{lemma}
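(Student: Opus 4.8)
The plan is to unwind the definition
\[
\CrDiv(P,K_X) = \{-K_X\} \cup \{ L \in \Pic(X) \text{ effective} : P\cdot L < P\cdot(-K_X)\}
\]
and turn the inequality $P\cdot L < P\cdot(-K_X)$ into an explicit bound on the numerical invariants of the effective class $L$. Writing $P=(n-\tfrac32)(-K_X)+\tfrac12(H-E_1)$ and using $(-K_X)^2=1$, $(-K_X)\cdot(H-E_1)=(-K_X)\cdot H - (-K_X)\cdot E_1 = 3-1 = 2$, and $(H-E_1)^2 = 0$, the threshold becomes
\[
P\cdot(-K_X) = \left(n-\tfrac32\right) + \tfrac12\cdot 2 = n-\tfrac12 .
\]
So $L\ne -K_X$ lies in $\CrDiv(P,K_X)$ iff
\[
\left(n-\tfrac32\right)\,(-K_X\cdot L) + \tfrac12\,(H-E_1)\cdot L \;<\; n-\tfrac12 .
\]
First I would observe that $-K_X$ is ample, so for any nonzero effective $L$ we have $-K_X\cdot L \ge 1$, with equality exactly for $(-1)$-curves (this is standard on a del Pezzo of degree $1$: $-K_X\cdot L=1$ and $L$ effective forces $L^2 = -1$ by adjacency/Hodge index arguments, hence $L$ is a $(-1)$-curve; one can also just cite \cite[\S 26]{Manin}). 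Likewise $H-E_1$ is nef (it is the class whose linear system is the pencil of lines through $p_1$), so $(H-E_1)\cdot L\ge 0$ for effective $L$.

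The main case split is on the value of $m:=-K_X\cdot L\ge 1$. If $m\ge 2$, the left-hand side is at least $2(n-\tfrac32) = 2n-3$, and $2n-3 \ge n-\tfrac12$ as soon as $n\ge \tfrac52$, i.e. $n\ge 3$; so for $n>2$ no class with $-K_X\cdot L\ge 2$ is critical, and we are reduced to $m=1$, i.e. to $(-1)$-curves. For a $(-1)$-curve $L$ the inequality reads $(n-\tfrac32)+\tfrac12(H-E_1)\cdot L < n-\tfrac12$, i.e. $(H-E_1)\cdot L < 1$, i.e. $(H-E_1)\cdot L \le 1$ (an integer), and since it is always $\ge 0$ this is the condition $(H-E_1)\cdot L\in\{0,1\}$ — matching the statement. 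Conversely every such $(-1)$-curve class does satisfy the strict inequality, so the description is exact. For $n=2$ the threshold $2n-3 = 1 = n-1 < n-\tfrac12$ is no longer enough to exclude $m=2$: a class $L$ with $-K_X\cdot L=2$ is critical iff $1 + \tfrac12(H-E_1)\cdot L < \tfrac32$, i.e. $(H-E_1)\cdot L = 0$; and for $m\ge 3$ the left side is $\ge 3(n-\tfrac32)=\tfrac32 > \tfrac32 = n-\tfrac12$ when $n=2$, wait — this is exactly borderline, so I would check $m=3$ directly: $3(n-\tfrac32) = \tfrac32$ is not $< \tfrac32$, so $m\ge 3$ is excluded even when $n=2$. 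Thus for $n=2$ the extra critical classes are precisely the effective classes $L$ with $-K_X\cdot L = 2$ and $(H-E_1)\cdot L = 0$. It remains to identify these: an effective $L$ with $-K_X\cdot L=2$ on a degree-$1$ del Pezzo either is irreducible (then $L^2=0$ and $L$ is a conic class, e.g. $H-E_1$ — one checks $(H-E_1)\cdot(H-E_1)=0$, so $H-E_1$ itself qualifies) or splits as a sum of two $(-1)$-curves $L_1+L_2$; imposing $(H-E_1)\cdot L = 0$ and nonnegativity on each piece forces $(H-E_1)\cdot L_i = 0$ for both $i$. This is the content of the second paragraph of the statement.

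The step I expect to be the main obstacle is the classification of effective classes with small $-K_X$-degree — specifically, proving cleanly that $-K_X\cdot L=1$ forces $L$ to be a $(-1)$-curve and that $-K_X\cdot L=2$ forces $L$ to be either a conic class or a sum of two $(-1)$-curves (and controlling reducibility so that the "sum of two $(-1)$-curves" description in the $n=2$ case is complete and non-redundant). The cleanest route is Hodge index plus adjunction: write $L^2 = -2 + 2p_a(L) - K_X\cdot L$, and for $L$ effective and $-K_X$ ample with $-K_X\cdot L$ small, bound $L^2$ via Hodge index ($L^2\,(-K_X)^2 \le (L\cdot -K_X)^2$) to pin down the genus and hence the class up to the Weyl action; alternatively one can simply cite the known enumeration of $(-1)$-curves and low-degree curves on degree-$1$ del Pezzo surfaces from \cite[\S 26]{Manin}. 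I would present it via Hodge index for self-containedness but fall back on the citation for the finite bookkeeping. Everything else is the bounded linear-arithmetic case analysis above, which is routine once the threshold $P\cdot(-K_X) = n-\tfrac12$ and the inequalities $-K_X\cdot L\ge 1$, $(H-E_1)\cdot L\ge 0$ are in hand.
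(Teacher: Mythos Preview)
Your approach is essentially identical to the paper's: both compute the threshold $P\cdot(-K_X)=n-\tfrac12$ (the paper clears denominators by working with $2P$), case-split on $m=-K_X\cdot L$, use Hodge index plus adjunction to identify the $m=1$ classes as $(-1)$-curves, and treat the residual $m=2$ case at $n=2$ separately.

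Two small corrections are worth flagging. First, your arithmetic at $m=1$ slips: from $(n-\tfrac32)+\tfrac12(H-E_1)\cdot L < n-\tfrac12$ one gets $(H-E_1)\cdot L<2$, not $<1$; integrality then gives $\le 1$ as you state, but not for the reason you wrote. Second, your dichotomy ``irreducible, or a sum of two $(-1)$-curves'' for effective $L$ with $-K_X\cdot L=2$ is false as stated (e.g.\ $L\in|{-2K_X}|$, or $L=-K_X+E$ for a $(-1)$-curve $E$); you must impose $(H-E_1)\cdot L=0$ \emph{before} decomposing, not after. The paper handles this more cleanly: since $H-E_1$ is nef, $(H-E_1)\cdot L=0$ forces every irreducible component of $L$ to lie in a fiber of the conic pencil $|H-E_1|\colon X\to\PP^1$, and the irreducible components of those fibers are exactly the smooth fiber class $H-E_1$ (with $-K_X$-degree $2$) and certain $(-1)$-curves (with $-K_X$-degree $1$). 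This immediately yields the classification and in particular shows $H-E_1$ is the \emph{only} irreducible class that occurs, which your sketch (``e.g.\ $H-E_1$'') does not quite establish.
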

\begin{proof}
Write $2P = A + N$ where $A = (2n-3)(-K_X)$ is ample and $N = H-E_1$ is nef.  Then $A\cdot (-K_X) = 2n-3$ and $N \cdot (-K_X) = 2$, so an effective curve class $L\neq -K_X$ is in $\CrDiv(P,K_X)$ if and only if $L\cdot (2P) < 2n-1$.

First suppose $n>2$, and let $L\in \CrDiv(P,K_X)$.  If $L\cdot (-K_X) \geq 2$, then $L\cdot(2P) \geq 4n-6>2n-1$, so $L$ is not critical.  Therefore $L\cdot (-K_X) = 1$.  Thus any curve of class $L$ is reduced and irreducible.  By the Hodge index theorem, $$L^2 = L^2\cdot (-K_X)^2 \leq (L\cdot (-K_X))^2 = 1,$$ with equality if and only if $L = -K_X$.  If the inequality is strict, then by adjunction we must have $L^2 = -1$ and $L$ is a $(-1)$-curve. Since $L\cdot (2P)< 2n-1$, we further have $L \cdot N \leq 1$.
 
Suppose instead that $n=2$ and $L\in \CrDiv(P,K_X)$. The cases $L \cdot (-K_X) \leq  1$ and $L \cdot (-K_X) \geq  3$ follow as in the previous case. The only other possibility is that $L\cdot (-K_X) = 2$ and $L\cdot N=0$.  Since $L\cdot N=0$, the curve $L$ is a sum of curves in fibers of the projection $X\to \PP^1$ given by $|N|$.  This easily implies the result.
\end{proof}

An essentially identical computation computes $\CrDiv(Q,K_X)$.  We omit the proof.

\begin{lemma}
If $n>2$, then the set $\CrDiv(Q,K_X)$ consists of $-K_X$ and the classes $L$ of $(-1)$-curves on $X$ with $L\cdot H \leq 2$.

When $n=2$, the above classes are still critical.  Additionally, for $1\leq i,j\leq 8$ the classes $H-E_i$ and $E_i+E_j$ are also critical.
\end{lemma}

The next application of Corollary \ref{cor:Dbest} shows the divisor class (\ref{divClass1}) is nef.
\begin{proposition}
The Gieseker wall for $X^{[n]}$ in the $(P,K_X)$-slice has center $(-1,0)$, and is given by the subobject $\OO_X(K_X)$.  It coincides with the wall given by $\OO_X(-L)$, where $L$ is any $(-1)$-curve with $L\cdot (H-E_1)=0$.  Therefore, the divisor $$(n-1)(-K_X)^{[n]}+\frac{1}{2}(H^{[n]}-E_1^{[n]})  -\frac{1}{2}B$$ is nef.
\end{proposition}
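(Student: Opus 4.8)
The plan is to apply Corollary \ref{cor:Dbest} in the $(P,K_X)$-slice and then read off which critical divisor produces the largest wall. First I would record the three intersection numbers needed, all computed on $X$ from $K_X^2=1$, $H^2=1$, $E_i^2=-1$, $H\cdot E_i=0$, $E_i\cdot E_j=0$ (so in particular $(H-E_1)^2=0$ and $(H-E_1)\cdot K_X=-2$):
\[ P^2=\left(n-\tfrac32\right)\left(n+\tfrac12\right),\qquad P\cdot K_X=\tfrac12-n,\qquad K_X^2=1. \]
Substituting these into the definition of $\eta_{P,K_X}$ gives $\eta_{P,K_X}=\dfrac{2n^2-2n-\tfrac12}{2n^2-2n-\tfrac32}=1+\dfrac{1}{2n^2-2n-\tfrac32}$, which for every $n\geq 2$ is strictly less than $2$, hence strictly less than $n$. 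Thus $n>\eta_{P,K_X}$, and a fortiori $2n>K_X^2$, so Corollary \ref{cor:Dbest} applies: the Gieseker wall in the $(P,K_X)$-slice is the largest wall given by a critical divisor $L\in\CrDiv(P,K_X)$, and every destabilized $I_Z$ fits in a sequence $0\to\OO_X(-C)\to I_Z\to I_{Z\subset C}\to 0$ with $C\in|L|$ for some such $L$.

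Next I would compute, for each critical class listed in Lemma \ref{lem:dPcritical}, the center of its wall. By Lemma \ref{lem:rk1Center} the wall of $\OO_X(-L)$ has center $(s_W(L),0)$ with $s_W(L)=-\dfrac{2n+L^2+2\,K_X\cdot L}{2\,P\cdot L}$, and since walls left of the vertical wall grow as their centers decrease, the goal is to minimize $s_W(L)$. Set $N:=H-E_1$. For $L=-K_X$ one has $P\cdot L=n-\tfrac12$, $L^2=1$, $K_X\cdot L=-1$, so $s_W(-K_X)=-1$. For a $(-1)$-curve $L$ one has $L^2=K_X\cdot L=-1$ and $P\cdot L=(n-\tfrac32)+\tfrac12(N\cdot L)$, so $s_W(L)=-\dfrac{2n-3}{\,2n-3+N\cdot L\,}$, which equals $-1$ exactly when $N\cdot L=0$ and equals $-\tfrac{2n-3}{2n-2}>-1$ when $N\cdot L=1$; this covers all critical classes when $n>2$. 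When $n=2$, Lemma \ref{lem:dPcritical} adds the class $N$, with $s_W(N)=0$, and the sums $L=L_1+L_2$ of $(-1)$-curves with $L_i\cdot N=0$, for which $P\cdot L=1$, $K_X\cdot L=-2$, hence $s_W(L)=-L^2/2$; here $L^2=-2+2\,L_1\cdot L_2\leq 0$, since $L_1,L_2$ are then curves contracted by the morphism $X\to\PP^1$ attached to $|N|$, hence components of a reducible fibre $F_0$, and $-K_X\cdot F_0=2$ with $-K_X$ ample forces $F_0$ to have at most two components, so $L_1\cdot L_2\leq 1$. In all cases $s_W(L)\geq -1$, with equality exactly for $L=-K_X$ and for the $(-1)$-curves orthogonal to $N$ (which exist, e.g.\ $E_2$).

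Putting it together: by Corollary \ref{cor:Dbest} the Gieseker wall is this largest critical wall, so its center is $(-1,0)$; as two numerical walls for a fixed class sharing a center are the same semicircle, it coincides with the wall given by $\OO_X(-L)$ for $L=-K_X$ (that is, the wall given by $\OO_X(K_X)$) and equally with the wall given by $\OO_X(-L)$ for any $(-1)$-curve $L$ with $L\cdot(H-E_1)=0$. That these subobjects genuinely destabilize ideal sheaves along the wall is seen by taking $Z$ to be $n$ points on a curve $C\in|{-K_X}|$ (respectively $C\in|L|$), giving $0\to\OO_X(-C)\to I_Z\to I_{Z\subset C}\to 0$. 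The only step that is not a mechanical intersection computation is the $n=2$ case, where ruling out the extra critical classes of Lemma \ref{lem:dPcritical} requires the fibre structure of $X\to\PP^1$ to bound $L_1\cdot L_2$; I expect that to be the one place demanding a bit of care.
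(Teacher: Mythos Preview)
Your proof is correct and follows essentially the same approach as the paper's: apply Corollary \ref{cor:Dbest} after checking $\eta_{P,K_X}<n$, then compare the centers $s_W(L)$ over the critical divisors of Lemma \ref{lem:dPcritical}. You in fact give more detail than the paper does---the explicit value of $\eta_{P,K_X}$ and the fibre argument bounding $L_1\cdot L_2$ in the $n=2$ case (where the paper simply asserts $L^2\leq 0$); the only cosmetic gap is that your fibre argument should also note the trivial case where $L_1,L_2$ lie in different fibres, giving $L_1\cdot L_2=0$.
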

\begin{proof}
By Equation (\ref{Dcenter}), the center of the wall for $\OO_X(K_X)$ is $(s_W,0)$ with $$s_W = \frac{2n-K_X^2}{(2P)\cdot K_X} = -1.$$ A straightforward computation shows $\eta_{P,K_X} < n$ for all $n\geq 2$.  Therefore, by Corollary \ref{cor:Dbest}, the Gieseker wall is computed by a critical divisor.

We only need to verify that no other critical divisor gives a larger wall. Let $L\in \CrDiv(P,K_X)$. By Lemma \ref{lem:rk1Center}, the center of the wall given by $\OO_X(-L)$ lies at the point $(s_L,0)$ where $$s_L = - \frac{2n+L^2+2(K_X\cdot L)}{(2P)\cdot L}.$$ If $L$ is a $(-1)$-curve, then $$s_L = -\frac{2n-3}{(2P)\cdot L} =-\frac{2n-3}{2n-3+L\cdot(H-E_1)}\geq -1,$$ with equality if and only if $L\cdot (H-E_1)=0$.  This proves the result if $n>2$.

To complete the proof when $n=2$, we only need to consider the additional critical classes mentioned in Lemma \ref{lem:dPcritical}.  For every such $L\in \CrDiv(P,K_X)$ we have $L\cdot K_X = -2$ and $L^2\leq 0$.  Thus $s_L\geq 0$ for every such divisor.
\end{proof}

Finally, an identical computation for the $(Q,K_X)$-slice shows the divisor class (\ref{divClass2}) is nef.  This completes the proof of Theorems \ref{thm-delPezzoCurves} and \ref{thm-delPezzoNef}.

\begin{proposition}
The Gieseker wall for $X^{[n]}$ in the $(Q,K_X)$-slice has center $(-1,0)$, and is given by the subobject $\OO_X(K_X)$.  It coincides with the wall given by $\OO_X(-E_i)$ for any $1\leq i \leq 8$.  Therefore, the divisor $$(n-1)(-K_X)^{[n]}+\frac{1}{2}(H^{[n]}-E_1^{[n]})  -\frac{1}{2}B$$ is nef.
\end{proposition}

\bibliographystyle{plain}

\end{document}